\documentclass[11pt]{article}

\usepackage{amssymb}
\usepackage{amsthm}
\usepackage{graphicx, gastex}
\usepackage{amsmath}
\usepackage{latexsym}
\usepackage{amsfonts}
\usepackage{color}

\DeclareSymbolFont{rsfscript}{OMS}{rsfs}{m}{n}
\DeclareSymbolFontAlphabet{\mathrsfs}{rsfscript}

\DeclareSymbolFont{rsfscript}{OMS}{rsfs}{m}{n}
\newtheorem{theorem}{Theorem}
\newtheorem{prop}{Proposition}
\newtheorem{defn}{Definition}
\newtheorem{lemma}{Lemma}
\newtheorem{cor}{Corollary}

\newtheorem{rem}{Remark}

\newcommand{\sch}{{Sch\"{u}tzenberger }}

\newcommand{\rf}{\rightarrow}

\newcommand{\la}{\langle}
\newcommand{\ra}{\rangle}

\hyphenation{de-fi-ni-tion}

\begin{document}

\title{Maximal subgroups of amalgams of finite inverse semigroups.}

\author{ Alessandra Cherubini\\
   	Dipartimento di Matematica, Politecnico di Milano\\
 	Piazza L. da Vinci 32, I 20133 Milano, Italy\\
 	\texttt{alessandra.cherubini@polimi.it}\\
	\and
	Tatiana B. Jajcayov\' a \\
        Department of Applied Informatics, Comenius University\\
	Bratislava, Slovakia\\
        \texttt {tatiana.jajcayova@fmph.uniba.sk} \\
        \and
       Emanuele Rodaro \\
        Departemento de Matem´atica, Universidade do Porto\\
	Rua do Campo Alegre, 687, Porto, 4169-007, Portugal\\
 	\texttt{ emanuele.rodaro@fc.up.pt}}
\maketitle
\begin{abstract}

\end{abstract}

We use the description of the Sch\"utzenberger automata for amalgams of finite inverse semigroups given by Cherubini, Meakin, Piochi in \cite{Finite} to obtain structural results for such amalgams. \sch  automata, in the case of amalgams of finite inverse semigroups, are automata with special structure possessing finite subgraphs, that contain all essential information about the automaton. Using this crucial fact, and the Bass-Serre theory, we show that the maximal subgroups of an amalgamated free-product  are either isomorphic to certain subgroups of the original semigroups or can be described as fundamental groups of particular finite graphs of groups build from the maximal subgroups of the original semigroups.

\section{Introduction}

If $S_1$ and $S_2$ are semigroups (groups) such that $S_1 \cap S_2
= U$ is a non-empty subsemigroup (subgroup) of both $S_1$ and
$S_2$ then $[S_1 , S_2 ; U]$ is called an amalgam of semigroups
(groups). The amalgamated free-product $S_1 *_U S_2$ associated
with this amalgam in the category of semigroups (groups) is
defined by the usual universal diagram.

The amalgam $[S_1 , S_2 ; U]$ is said to be strongly embeddable in
a semigroup (group) $S$ if there are injective homomorphisms
$\phi_i : S_i  \rightarrow S$ such that $\phi_1 |_U = \phi_2 |U $
and $S_1 \phi_1 \cap S_2 \phi_2 = U\phi_1 = U\phi_2$. It is well
known that every amalgam of groups embeds in a group while
semigroup amalgams do not necessarily embed in any semigroup
\cite{Kim}. On the other hand, every amalgam of inverse
semigroups (in the category of inverse semigroups) embeds in an
inverse semigroup, and hence in the corresponding amalgamated free
product in the category of inverse semigroups \cite{Hall}.

An inverse semigroup is a semigroup $S$ with the property that for
each element $a \in S$ there is a unique element $a^{-1} \in S$
such that $a = a a^{-1} a$ and $a^{-1} = a^{-1} a a^{-1}$,
$a^{-1}$ is called the inverse of $a$. A consequence of the
definition is that the set of the idempotents $E(S)$ is a semilattice.
One may also define a natural partial order on
$S$ putting $a \leq b$ if and only if $a = e b$ for some $e\in E(S)$.

Inverse semigroups may be regarded as semigroups of partial
one-to-one transformations, so they arise very naturally in
several areas of mathematics and more recently also in computer
science,  mainly since the inverse of an
element can be seen as the ``undo with a trace"  of the
action represented by that element. We refer the reader to the
book of Petrich \cite{Petrich} for basic results and notation
about inverse semigroups and to the more recent books of Lawson
\cite{Lawson} and Paterson \cite{Paterson} for many references to
the connections between inverse semigroups and other branches of
mathematics.

The free object on a set $X$ in the category of inverse semigroup
is denoted by $FIS(X)$. It is the quotient of the free semigroup
$(X \cup X^{-1} )^+ $ by the least congruence $\nu$ that makes the
resulting quotient semigroup inverse (see \cite{Petrich}). The inverse
semigroup $S$ presented by a set $X$ of generators and a set $T$
of relations is denoted by $S = Inv\la X;T \ra$. This is the
quotient of the free semigroup $(X \cup X^{-1} )^+$ by the least
congruence $\tau$ that contains $\nu$ and the relations in $T$.

The structure of $FIS(X)$ was studied via graphical methods by
Munn \cite{munn}. Munn's work was greatly extended by Stephen
\cite{Steph} who introduced the notion of \sch graphs associated
with presentations of inverse semigroups. These graphs were widely
used in the study algorithmic problems and the structure of several
classes of inverse semigroups (see, for instance \cite{Ben2,FreeInverse,Finite,ChNuRoEquation,ChNuRo,ChRoAmVsHnn,HMM,Jajcayova,JMMS,RoByci,RoChe,RoSi,Steph98}). 
In particular Haataja, Margolis and Meakin were the
first to show that Bass-Serre theory may be applied to study the
structure of maximal subgroups, to obtain results for amalgams of inverse
semigroups where $U$ contained all idempotents of $S_1$ and $S_2$.
Their construction was extended by Bennett \cite{Ben2} and
Jajcayov\'a \cite{Jajcayova} to respectively study the maximal subgroups of
a special class of amalgams and HNN-extensions of inverse
semigroups.

In \cite{Finite},  the word problem for amalgams of
finite inverse semigroups was shown to be decidable by constructing an automaton that is a good approximation of the \sch
automaton. Here, we make use of this construction to study
the structure of maximal subgroups in such amalgams. 
This  is done along the lines of Bennett's study of
maximal subgroups of lower bounded amalgams, but as
amalgams of finite inverse semigroups are not necessarily lower
bounded, \sch automata of amalgams of finite inverse semigroups
differ from  Bennett's automata, mainly by the fact the
\sch graphs of the two original semigroups do not appear as subgraphs of the resulting \sch automaton of the amalgam. This leads to several important technical differences in the treatment and in the results. 

The paper is organized as follows: in Section \ref{sec:preliminaires}, we recall basic definitions and relevant results concerning   \sch automata of inverse semigroups, and the structure  and properties of \sch automata of amalgams of finite inverse semigroups in particular. In Section \ref{sec: automorphism} we prove that the automorphism groups of the \sch graphs of  amalgams (isomorphic to the maximal subgroups) are isomorphic to the automorphism groups of particular subgraphs with special properties.  We study these subgraphs and their properties in Section \ref{sec: union of hosts}. In Section \ref{Sect.BS} we give a brief review of the Bass-Serre theory of groups acting on graphs. Finally, merging this theory with the previous results, in Section \ref{sec: maximal sub}, we give a complete description of the maximal subgroups in an amalgam of finite inverse semigroups.

\section{Preliminaries}\label{sec:preliminaires}

In this section we review  definitions and  results concerning   \sch automata of inverse semigroups, and briefly describe the
construction of \sch graphs of amalgams of finite inverse
semigroups. We
refer the reader to \cite{Ben,Finite,Petrich,Steph} for more details.

An \emph{inverse word graph} over an alphabet $X$ is a strongly
connected labelled digraph whose edges are labelled over $X \cup
X^{-1}$, where $X^{-1}$ is the set of formal inverses of elements
in $X$, so that for each edge $e$ labelled by $x\in X$ there is an
edge labelled by $x^{-1}$ in the reverse direction. A finite
sequence of edges $e_i=(\alpha_i,a_i,\beta_i),\ 1\leq i\leq n,
a_i\in X\cup X^{-1}$ with $\beta_i=\alpha_{i+1}$ for all $i$ with
$1\leq i<n$, is an $\alpha_1-\beta_n$ path of $\Gamma$ labelled by
$a_1a_2\ldots a_n\in (X\cup X^{-1})^+$. An {\it inverse automaton}
over $X$ is a triple $\mathcal{A}=(\alpha, \Gamma, \beta)$ where
$\Gamma$ is an inverse word graph over $X$ with set of vertices $V(\Gamma)$ and $\alpha, \beta\in
V(\Gamma)$ are two special vertices called the initial and final state of
$\mathcal{A}$. The language $L[\mathcal{A}]$ recognized by
$\mathcal{A}$ is the set of labels
of all $\alpha-\beta$ paths of $\Gamma$.\\
The inverse word graph $\Gamma$ over $X$ is \emph{deterministic}
if for each $\nu\in V(\Gamma)$, $a\in X\cup X^{-1}$,
$(\nu,a,\nu_1),(\nu,a,\nu_2)\in Ed(\Gamma)$ implies $\nu_1=\nu_2$.

Morphisms between inverse word graphs are graph morphisms that
preserve labelling of edges and are referred to as {\it
$V$-homomorphisms} in \cite{Steph}. In this paper we simply refer to them as homomorphisms, and in the case a morphism is surjective, injective or bijective, we refer to it as an epimorphism, monomorphism or isomorphism, respectively. The group of  all automorphisms of graph $\Gamma$ is denoted by $Aut(\Gamma)$. If $\Gamma$ is an
inverse word graph over $X$ and $\rho$ is an equivalence relation
on the set of vertices of $\Gamma$, the corresponding quotient
graph $\Gamma /\rho$ is called a \emph{$V$-quotient} of $\Gamma$
(see \cite{Steph} for details). There is a least equivalence
relation on the vertices of an inverse automaton $\Gamma$ such
that the corresponding $V$-quotient is deterministic. A
deterministic $V$-quotient of $\Gamma$ is called a
\emph{$DV$-quotient}. There is a natural homomorphism from $\Gamma$ onto a $V$-quotient of $\Gamma$. The notions of morphism, $V$-quotient and $DV$-quotient of
inverse graphs extend  analogously to  inverse
automata. (see \cite{Steph}).

Let $S = Inv \la X ; T \ra\simeq (X\cup X^{-1})^+/\tau$ be an inverse
semigroup. The \sch graph $S\Gamma (X,T;w)$ for a word $w \in (X
\cup X^{-1} )^+ $ relative to the presentation $\langle X|T
\rangle$ has the $\cal R$-class of $w\tau$ in $S$ as the set of
vertices and its edges consist of all the triples $(s,x,t)$ with $x \in X \cup
X^{-1}$, and $s\cdot x\tau = t$. We view this edge as being
directed from $s$ to $t$. The graph $S\Gamma (X,T;w)$ is a
deterministic inverse word graph over $X$. The structure of \sch
graphs is strictly connected with the Green's relations on $S$. In
particular the following results by Stephen will be important for
our purposes.
\begin{prop}\label{prop:old Ste}
Let $S= Inv \la X ; T \ra$ be an inverse semigroup and let $e,f\in
E(S)$. Then
\begin{enumerate}
    \item $e \mathcal{D} f$ if and only if there exists a
    $V$-isomorphism $\phi: S\Gamma (X,T;e)\rightarrow S\Gamma
    (X,T;f)$ \cite[Theorem 3.4 (a)]{Steph}.
    \item The $\mathcal{H}$-class of $e$ and $Aut(S\Gamma (X,T;e))$
    are isomorphic groups \cite[Theorem 3.5]{Steph}.
\end{enumerate}
\end{prop}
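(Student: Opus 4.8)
The plan is to derive both items from one idea: an element of $S$ that relates two idempotents (an element $a$ with $aa^{-1}=e$ and $a^{-1}a=f$, respectively an element of the maximal subgroup $H_e$, i.e.\ the $\mathcal H$-class of the idempotent $e$) acts on \sch graphs by left translation, and conversely every (iso)morphism of \sch graphs arises in this way because such morphisms are \emph{rigid}: they cannot change the range idempotent $s^{-1}s$ of a vertex $s$. Throughout I would use freely that the vertex set of $S\Gamma(X,T;w)$ is the $\mathcal R$-class $R_{w\tau}$, that $(s,x,t)$ is an edge precisely when $s,t\in R_{w\tau}$ and $s\cdot(x\tau)=t$, that $S\Gamma(X,T;w)$ is deterministic and strongly connected, and the elementary facts that $e\,\mathcal D\,f$ iff there is $a\in S$ with $aa^{-1}=e$ and $a^{-1}a=f$, and that $E(S)$ is a (commutative) semilattice.

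For (1), in the direction $(\Rightarrow)$ I would pick $a$ with $aa^{-1}=e$, $a^{-1}a=f$ and check that $s\mapsto a^{-1}s$ is a bijection $R_e\to R_f$ with inverse $u\mapsto au$; the required identities $(a^{-1}s)(a^{-1}s)^{-1}=f$ and $aa^{-1}s=s$ for $s\in R_e$ are routine. This bijection commutes with the right action of $X\cup X^{-1}$, hence is a $V$-isomorphism $S\Gamma(X,T;e)\to S\Gamma(X,T;f)$. For $(\Leftarrow)$, given a $V$-isomorphism $\phi\colon S\Gamma(X,T;e)\to S\Gamma(X,T;f)$ I would set $t=\phi(e)\in R_f$, so $tt^{-1}=f$. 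A label $v$ reads a loop at a vertex $s$ of a \sch graph if and only if $s\cdot(v\tau)=s$ (one checks that the intermediate vertices then automatically remain in $R_{s}$), equivalently $(s^{-1}s)(v\tau)=s^{-1}s$; since $\phi$ matches loops at $e$ with loops at $t$, this gives $\{v : e(v\tau)=e\}=\{v : (t^{-1}t)(v\tau)=t^{-1}t\}$. Testing each side against a word representing the idempotent in question and using commutativity of $E(S)$ forces $e=t^{-1}t$, so $t$ witnesses $e\,\mathcal D\,f$.

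For (2), I would define $\Phi\colon H_e\to Aut(S\Gamma(X,T;e))$ by letting $\Phi(h)$ be left translation $s\mapsto hs$; exactly as in $(\Rightarrow)$ above, $\Phi(h)$ is a well-defined automorphism, $\Phi$ is a group homomorphism, and it is injective because $\Phi(h)(e)=he=h$ (as $h^{-1}h=e$), so $\Phi(h)=\mathrm{id}$ forces $h=e$. For surjectivity, given $\psi\in Aut(S\Gamma(X,T;e))$ I would put $h=\psi(e)\in R_e$; applying the loop-language argument of $(\Leftarrow)$ to the vertices $e$ and $h$ yields $h^{-1}h=e$, so $h\in H_e$. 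Finally, since $S\Gamma(X,T;e)$ is strongly connected, every vertex has the form $e\cdot(v\tau)$ for some word $v$ labelling a path from $e$, and a morphism of deterministic inverse word graphs is determined by its value at a single vertex; as $\psi$ and $\Phi(h)$ agree at $e$, they coincide, so $\Phi$ is onto.

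The step I expect to be the crux, common to $(\Leftarrow)$ of (1) and the surjectivity in (2), is the rigidity claim: an (iso)morphism of \sch graphs moves a vertex only to a vertex with the same range idempotent. Its heart is the reduction of ``$v$ labels a loop at $s$'' to ``$(s^{-1}s)(v\tau)=s^{-1}s$'', together with the observation that a subset of $(X\cup X^{-1})^+$ of that shape determines the idempotent $s^{-1}s$; this is precisely where Stephen's analysis of \sch automata is used, and since both items are in fact due to him one may instead simply invoke \cite[Theorems 3.4(a), 3.5]{Steph}.
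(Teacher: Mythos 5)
The paper does not prove this proposition at all; it is quoted verbatim from Stephen with citations to \cite[Theorems 3.4(a) and 3.5]{Steph}, so there is no in-paper argument to compare against. Your reconstruction is correct and is essentially Stephen's own: left translation by an element $a$ with $aa^{-1}=e$, $a^{-1}a=f$ (resp.\ by an element of $H_e$) gives the $V$-isomorphism (resp.\ the automorphisms), and the converse directions rest on the two facts you isolate, namely that $v$ labels a loop at a vertex $s$ of $S\Gamma(X,T;w)$ iff $s\cdot(v\tau)=s$ (the intermediate vertices stay in $R_{w\tau}$ because $s\geq_{\mathcal R}s\cdot(v_1\cdots v_k)\tau\geq_{\mathcal R}s\cdot(v\tau)$ for every prefix), and that the loop language at $s$ determines $s^{-1}s$ by testing it against word representatives of the two idempotents and using commutativity of $E(S)$. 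The surjectivity argument in (2) correctly uses that the graph is strongly connected and deterministic, so an automorphism is determined by its value at one vertex --- a fact the paper itself invokes in Section 3. The only cosmetic remark is that Stephen's isomorphism is usually written as $s\mapsto m^{-1}s$ (as in the paper's Lemma \ref{lem:stabilazer of edge is maximal subgroup}); your $s\mapsto hs$ works equally well and is a homomorphism under the convention $(\varphi\circ\psi)(x)=\varphi(\psi(x))$. No gaps.
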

The second statement identifies the group of symmetries of $S\Gamma (X,T;e)$ with the maximal subgroup of $S$ having $e$ as unity. This fact is fundamental since it gives us a geometric interpretation of these maximal subgroups and it will be implicitly used throughout the paper. The automaton ${\cal A} (X,T;w)$ whose underlying graph is $S\Gamma (X,T;w)$ with the vertex $ww^{-1}\tau$ as the initial state and the vertex $w\tau$ as the terminal state, is called the \sch automaton
of $w \in (X \cup X^{-1} )^+ $ relative to the presentation
$\langle X|T \rangle$.

In \cite{Steph} Stephen provides an iterative (but in general not
effective) procedure to build ${\cal A} (X,T;w)$ via two operations,
\emph{the elementary determination} and \emph{the elementary
expansion}. We briefly recall such operations. Let $\Gamma$ be an inverse word graph over $Y$, an
elementary determination consists of folding two edges starting from
the same vertex and labeled by the same letter of the alphabet
$Y\cup Y^{-1}$. The elementary expansion applied to $\Gamma$
relative to a presentation $\langle Y|T\rangle$ consists in adding a
path $(\nu_1,r,\nu_2)$ to $\Gamma$ wherever $(\nu_1,t,\nu_2)$ is a
path in $\Gamma$ and $(r,t)\in T\cup T^{-1}$. An inverse word graph
is called \emph{closed} with respect to $\langle Y|T\rangle$ if it
is a deterministic word graph where no expansion relative to
$\langle Y|T\rangle$ can be performed. An inverse automaton is
\emph{closed} with respect to $\langle Y|T\rangle$ if its underlying
graph is so.

Let $S_i = Inv \la X_i ; R_i \ra = (X_{i} \cup X_{i}^{-1})^{+} /
\eta_{i}$, $i=1,2$, where the $X_i$ are disjoint alphabets. For an
amalgam $[S_1 , S_2 , U]$  we view the natural image of $u\in U$ in
$S_i$ under the embedding of $U$ as a word in the alphabet $X_i$ and
$\la X_{1} \cup X_{2}| R_1 \cup R_2 \cup W\ra$ with
$W=\{(\phi_1(u) , \phi_2(u))|u \in U\}$ is a presentation of
 $S_1*_US_2$. We put $X=X_1\cup X_2$ and $R=R_1\cup R_2$ and
we call $\langle X|R\cup W\rangle$ the \emph{standard presentation
of $S_1 *_US_2$ with respect to the presentations of $S_1$ and
$S_2$}, for short the standard presentation of $S_1*_US_2\simeq
(X\cup X^{-1})^+/\tau$. In the sequel we will use the superscript notations $\mathcal{D}^{U},\mathcal{D}^{S_i}$, $\mathcal{D}^{S_1 *_US_2}$ to discriminate the $\mathcal{D}$-classes in $U,S_i, (S_1 *_US_2)$, respectively. We keep this convention for all the Green's relations as well as for their classes. For instance, for the maximal subgroup in $S_{1}$ of an idempotent $e$ in $S_{1}$ we use the symbol $H_{e}^{S_{1}}$, instead if we consider the maximal subgroup in the free-product with amalgamation we use the notation $H_{e}^{S_{1*_{U}S_{2}}}$.
We adhere to this notation throughout the remain of the paper and we always assume that $S_1$ and $S_2$ are finite inverse semigroups.

Let $\Gamma$ be an inverse word graph labeled over $X = X_{1} \cup
X_{2}$ with $X_{1} \cap X_{2}=\emptyset$, an edge of $\Gamma$
that is labeled from $X_{i} \cup X_{i}^{-1}$ (for some $i \in
\{1,2\}$) is said to be \emph{colored} $i$. A subgraph of $\Gamma$
is called \emph{monochromatic} if all its edges have the same
color. A \emph{lobe} of $\Gamma$ is defined to be a maximal
monochromatic connected subgraph of $\Gamma$. The coloring of
edges extends to coloring of lobes. Two lobes are said to be
\emph{adjacent} if they share common vertices, called
\emph{intersection vertices}. If $\nu \in V(\Gamma)$ is an
intersection vertex, then it is common to two unique lobes, which
we denote by $\Delta_{1}(\nu)$ and $\Delta_{2}(\nu)$, colored $1$ and $2$, respectively.
We define the \emph{lobe graph} of
$\Gamma$ to be the graph whose vertices are the lobes of $\Gamma$
and whose edges correspond to adjacency of
lobes.\\
We remark that a nontrivial inverse word graph $\Delta$ colored
$i$ and closed relative to $\la X_i|R_i\ra$ contains all the paths
$(\nu_1,v',\nu_2)$ with $v'\in (X_i\cup X_i^{-1})^+$ such that
$v'\eta_i=v\eta_i$, provided that $(\nu_1,v,\nu_2)$ is a path of
$\Delta$. Hence we often say that there is a path
$(\nu_1,s,\nu_2)$ with $s\in S_i$ in $\Delta$ whenever
$\{(\nu_1,v,\nu_2)| v\eta_i=s\}\neq\emptyset$. Similarly we say
that $(\nu_1,u,\nu_2)$ with $u\in U$ is a path of $\Delta$ to mean
that $(\nu_1,\phi_i(u),\nu_2)$ is a path of $\Delta$. For all
$\nu\in V(\Delta)$ we denote by $\mathcal{L}_U(\nu,\Delta)$ the
set of all the elements $u\in U$ such that $(\nu,u,\nu)$ is a loop
based at $\nu$ in $\Delta$. Thus following the notation of
\cite{Finite}, that slightly modifies the analogous one introduced
in \cite{Ben}, we say that an inverse word graph $\Gamma$ on $X$
is an \emph{opuntoid graph} if it satisfies the following
properties.
\begin{itemize}
    \item $\Gamma$ is a deterministic inverse word graph;
    \item the lobe graph  is a tree, denoted $T(\Gamma)$;
    \item each lobe of $\Gamma$ is a finite closed DV-quotient of a
    Sch\"{u}tzenberger graph relative to $\langle X_i|R_i\rangle$, $i\in\{1,2\}$;
    \item (\emph{loop equality property}) for each intersection vertex $\nu$
    of $\Gamma$,
    $\mathcal{L}_U(\nu,\Delta_1(\nu))=\mathcal{L}_U(\nu,\Delta_2(\nu))$;
    \item (\emph{assimilation property}) for each intersection vertex $\nu$
    of $\Gamma$, and for each $\nu'\in V(\Delta_1(\nu))\cap V(\Delta_2(\nu))$ with $\nu'\neq \nu$
    there is $u\in U$ such that
    $(\nu,u,\nu')$ is a path in both $\Delta_1(\nu)$ and $\Delta_2(\nu)$.
    Moreover $(\nu,u',\nu')$ with $u'\in U$ is path in $\Delta_1(\nu)$
    if and only if $(\nu,u',\nu')$ is a path in $\Delta_2(\nu)$.
\end{itemize}
We remark that assimilation property implies that if $\nu$ is an
intersection vertex of $\Gamma$ and $(\nu,u',\nu')$ is a path of
$\Gamma$, then $\Delta_i(\nu)=\Delta_i(\nu')$ with $i\in \{1,2\}$.
This property is referred as the \emph{related pair separation property}
in \cite{Ben}. A \emph{subopuntoid subgraph} $\Theta$ is an inverse word
subgraph of an opuntoid graph $\Gamma$ such that if $\Delta$ is a
lobe of $\Theta$, then $\Delta$ is also a lobe of $\Gamma$. If $\phi:\Gamma\rf\Gamma'$ is a homomorphism between the two opuntoid graphs $\Gamma,\Gamma'$ and $\Theta$ is a subopuntoid subgraph of $\Gamma$, we often denote $\phi|_{\Theta}$ the restriction of $\phi$ to $\Theta$. Opuntoid
automata are automata whose underlying graphs are so. 
Since $S_1, S_2$ are finite, for each lobe $\Delta$ of an opuntoid
graph $\Gamma$ colored $i\in\{1,2\}$ and for each $\nu\in
V(\Delta)$, there is a minimum idempotent, denoted by $e_i(\nu)$,
labeling a loop based at $\nu$. Moreover if
$\mathcal{L}_U(\nu,\Delta)\neq\emptyset$ then there is also a
minimum idempotent belonging to $\mathcal{L}_U(\nu,\Delta)$ which
is denoted by $f(e_i(\nu))$.

In \cite{Finite} the \sch automaton $\mathcal{A}(X,R\cup W;w)$ is
built by means of a sequence of constructions. The first three
constructions are iteratively applied finitely many times to the
linear automata $(\alpha, lin(w),\beta)$. These constructions are
applied on the lobes and the intersection vertices, they do not
increase the number of lobes of $lin(w)$ and after applying the
fourth one, a finite opuntoid automaton $Core(w)=(\alpha,
\Gamma_0, \beta)$ called \emph{Core automaton} or briefly
\emph{Core} of $w$ is obtained. $Core(w)$ contains all the
information to build the \sch automaton of $w$ and results to be a
subopuntoid automaton of the \sch automaton of $w$. It is closed
with respect to $\langle X|R \rangle$ but it is not in general
closed relative to $\langle X|R\cup W \rangle$ and approximates
$\mathcal{A}(X,R\cup W;w)$ in the sense of Stephen \cite{Steph}.

Then a last construction, called Construction 5, is applied in
general infinitely many times to $Core(w)$ to obtain
$\mathcal{A}(X,R\cup W;w)$. Let $\Gamma$ be an opuntoid graph and
let $\Delta$ be a lobe colored $i\in\{1,2\}$. Then $\nu\in
V(\Gamma)$ is a \emph{bud} of $\Gamma$ (see \cite{Ben}) if it is
not an intersection vertex and
$\mathcal{L}_U(\nu,\Delta)\neq\emptyset$. The graph $\Gamma$ is
\emph{complete} if it has no bud.
\\
\\
\textbf{Construction 5}(see \cite{Finite})
\begin{itemize}
\item [-]Let $\Gamma$ be a non-complete opuntoid graph and let $\nu\in
V(\Delta)$ be a bud belonging to a lobe $\Delta$ colored by some
$i\in \{1,2\}$. Then $\mathcal{L}_U(\nu,\Delta)\neq\emptyset$. Let
$f=f(e_i(\nu))$ and let
$(x,\Lambda,x)=\mathcal{A}(X_{3-i},R_{3-i};f)$. Consider the set of vertices (called as \emph{net}):
$$
N(x,\Lambda)=\{y\in V(\Lambda):(x,u,y)\:\mbox{is a path
in}\;\Lambda\;\mbox{for some}\; u\in\mathcal{L}_U(\nu,\Delta)\}
$$
and let $\rho\subseteq V(\Lambda)\times V(\Lambda)$ be the least
equivalence relation that identifies each vertex of $N(x,\Lambda)$
with $x$ and such that $\Lambda/\rho$ is deterministic and put
$\Delta'=\Lambda/\rho$. Consider the inverse word automaton
$\mathcal{B}=(\nu,\Gamma,\nu)\times(x\rho,\Delta',x\rho)$ (see
\cite{Steph}), then for all $u\in U$ such that $(\nu,u,y)$ and
$(\nu,u,y')$ are paths of $\Delta$ and $\Delta'$ respectively,
consider the equivalence relation $\kappa$ on $V(\mathcal{B})$
that identifies $y$ and $y'$ and call $\overline{\Gamma}$ the
underlying graph of $\mathcal{B}/\kappa$.
\end{itemize}
Lemma 10 of \cite{Finite} states that $\overline{\Gamma}$ is an
opuntoid graph with one more lobe than $\Gamma$ and the graph
$\Gamma$ is unchanged by this construction. 

Let $\Delta$ and $\Delta'$ be two adjacent lobes of an opuntoid
graph $\Gamma$. Following \cite{Ben,Rthesys}, we say that
$\Delta'$ \emph{directly feeds off} $\Delta$, in symbols
$\Delta\mapsto\Delta'$, if $\Delta'$ can be obtained from $\Delta$
by applying Construction 5 at some intersection vertex $\nu\in
V(\Delta)\cap V(\Delta')$. Moreover for each pair of lobes
$\Delta,\Delta'$ of $\Gamma$ we say that $\Delta'$ feeds off
$\Delta$, $\Delta\mapsto^*\Delta'$, if $\Delta$ and $\Delta'$ are
related in the transitive closure of $\mapsto$. We remark that the
definition of feeding off in our case differs from the one given
in \cite{Ben} because in Bennett's case no quotient of the lobe
$\Lambda$ is needed.

For each finite opuntoid graph $\Gamma$ consider the sequence
$$
\Gamma=\Gamma_1\hookrightarrow\Gamma_2\hookrightarrow\ldots\Gamma_m\hookrightarrow\ldots
$$
where $\Gamma_{m+1}$ is obtained from $\Gamma_m$ applying
Construction 5 at some bud of $V(\Gamma_m)$ and consider the directed limit $\varinjlim \Gamma_i
= \bigcup_{k>0} \Gamma_k$. Then $\varinjlim \Gamma_i$ is a closed
automaton with respect to $\langle X|R\cup W\rangle$ (see \cite{Finite}); this operation is called in the sequel the closure of $\Gamma$ and it is denoted by
$cl_{R\cup W}(\Gamma)$. The uniqueness of the closure of an
opuntoid graph follows from the work of Stephen
\cite{Steph,Steph98}. Note that the closure of the
underlying graph $\Gamma_0$ of $Core(w)$ is the \sch graph
$S\Gamma(X,R\cup W;w)$. This graph is in general an
infinite graph, however a direct consequence of the finiteness of
$S_1, S_2$ is that there are finitely many different types of
lobes up to isomorphisms.

We say that a lobe $\Delta'$ of an opuntoid graph $\Gamma$ that is adjacent to
precisely one other lobe $\Delta$ of $\Gamma$ is called a
\emph{parasite} (see also \cite{Ben2}) of $\Gamma$ if $\Delta'$ feeds
off $\Delta$.

A subopuntoid subgraph $\Gamma'$ of an opuntoid graph $\Gamma$ is
called a \emph{host} of $\Gamma$ if:
\begin{itemize}
    \item its lobe tree is finite,
    \item it is parasite-free,
    \item every lobe of $\Gamma$ not belonging to $\Gamma'$ feeds off some lobe of $\Gamma'$.
\end{itemize}
A host of an opuntoid automaton is a host of its underlying graph.

It is straightforward to see that a host $\Theta$ of an opuntoid
graph $\Gamma$ is a minimal subopuntoid subgraph of $\Gamma$ such
that $cl_{R\cup W}(\Theta)\supseteq\Gamma$. Moreover we have the
following proposition whose statement and proof are formally equal
to the ones of \cite[Lemma 6.2]{Ben} even if the definitions
differ in some technical details

\begin{prop}\label{Prop:host set}
Let $\Gamma$ be an opuntoid graph. Then a host of $\Gamma$ is a
maximal parasite-free subopuntoid subgraph of $\Gamma$. If $\Gamma$
has more than one host, then every host is a lobe and the unique
reduced lobe path connecting any two hosts consists entirely of
lobes that are hosts.
\end{prop}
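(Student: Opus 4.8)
The plan is to derive both statements from the defining properties of a host together with one structural observation: a feeding chain $\Lambda_0\mapsto\Lambda_1\mapsto\cdots\mapsto\Lambda_k$ traces out a reduced path in the lobe tree $T(\Gamma)$. Indeed each step is an application of Construction~5 at a vertex shared by the two lobes (Lemma~10 of \cite{Finite}), so consecutive lobes are adjacent; a shortest sub-chain joining two given lobes thus visits distinct lobes and, $T(\Gamma)$ being a tree, is \emph{the} reduced path between them. Consequently, if $\Theta$ is a subopuntoid subgraph — automatically connected, since inverse word graphs are strongly connected, with lobe tree a subtree of $T(\Gamma)$ — and a lobe $\Delta\notin\Theta$ feeds off some lobe of $\Theta$, then $\Delta$ is \emph{directly} fed off by the neighbour of $\Delta$ on the reduced path from $\Delta$ towards $\Theta$.

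For the first assertion a host $\Theta$ is parasite-free by definition, so it suffices to rule out a parasite-free subopuntoid subgraph $\Theta'\supsetneq\Theta$. The lobes of $\Theta'\setminus\Theta$ form a subforest of $T(\Gamma)$; a leaf lobe $\Delta^*$ of $\Theta'$ lying in $\Theta'\setminus\Theta$ — which exists when $\Theta'$ has finite lobe tree, the remaining case being handled exactly as in \cite[Lemma~6.2]{Ben} — is directly fed off by its unique neighbour in $\Theta'$ by the observation above, hence is a parasite of $\Theta'$; contradiction. Two consequences follow: no host properly contains another; and two distinct hosts cannot share a lobe, for otherwise $\Theta_1\cup\Theta_2$ is a connected subopuntoid subgraph properly containing each, hence not parasite-free, and a parasite $\Delta^*$ of it lying (say) in $\Theta_1$ has at most one neighbour in $\Theta_1$ (else two in $\Theta_1\cup\Theta_2$), so either $\Delta^*$ is already a parasite of $\Theta_1$ (impossible) or $\Theta_1=\{\Delta^*\}$, whence the shared lobe is $\Delta^*$, so $\{\Delta^*\}\subseteq\Theta_2$ and therefore $\Theta_1=\{\Delta^*\}=\Theta_2$ (impossible).

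Now let $\Theta_1\neq\Theta_2$ be hosts and, since they share no lobe, let $P_0,\dots,P_m$ ($m\geq1$) be the reduced lobe path of $T(\Gamma)$ between them, with $P_0\in\Theta_1$, $P_m\in\Theta_2$ and $P_1,\dots,P_{m-1}\notin\Theta_1\cup\Theta_2$. Because $P_m\notin\Theta_1$ feeds off a lobe of the host $\Theta_1$, the feeding chain reaching it runs along $P_0,\dots,P_m$, so $P_i\mapsto P_{i+1}$ for all $i$; symmetrically $P_{i+1}\mapsto P_i$ for all $i$. Apply the observation to the connected subopuntoid subgraph $\Xi=\Theta_1\cup\{P_1,\dots,P_{m-1}\}\cup\Theta_2\supsetneq\Theta_1$: it is not parasite-free, and a parasite $\Delta^*$ of it cannot be an interior $P_j$ (which has two neighbours $P_{j-1},P_{j+1}$ in $\Xi$), so $\Delta^*\in\Theta_1\cup\Theta_2$; the same neighbour count as above then forces $\Delta^*$ to be $P_0$ with no neighbour in $\Theta_1$, or $P_m$ with no neighbour in $\Theta_2$, i.e.\ $\Theta_1=\{P_0\}$ or $\Theta_2=\{P_m\}$. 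Say $\Theta_1=\{P_0\}$. Since $\{P_0\}$ is a host, every lobe $X\neq P_0$ satisfies $P_0\mapsto^*X$; using $P_{j+1}\mapsto P_j$, an immediate induction shows each $\{P_j\}$ is a host (finite lobe tree and parasite-freeness being trivial, while $P_{j+1}\mapsto^*X$ holds via $P_{j+1}\mapsto P_j$ for every $X\neq P_{j+1}$). In particular $\{P_m\}$ is a host containing $P_m$, so $\{P_m\}\subseteq\Theta_2$ gives $\Theta_2=\{P_m\}$. Thus both hosts are lobes and every lobe of the path joining them is a host. Finally, if $\Gamma$ has at least two hosts then any host $\Theta$ may be paired with a second host $\Theta'$; the above forces one of $\Theta,\Theta'$ to be a single lobe, and if it is $\Theta'$ the path argument with $\Theta'$ in the role of $\Theta_1$ makes $\Theta$ a single lobe too. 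Hence every host is a lobe.

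I expect the two delicate points to be: in the first assertion, ensuring that a parasite-free subopuntoid subgraph strictly larger than a host really contains a leaf lobe outside the host (immediate once it is known to have finite lobe tree, and otherwise dealt with as in \cite[Lemma~6.2]{Ben}); and, throughout the neighbour-counting, the fact that the relation ``directly feeds off'' depends only on the two lobes and the vertex at which they are glued, so that a parasite of a union such as $\Theta_1\cup\Theta_2$ or $\Xi$ that happens to lie in $\Theta_i$ is automatically a parasite of $\Theta_i$ — a point that rests on the construction-theoretic setup of \cite{Finite} and is handled just as in \cite{Ben}.
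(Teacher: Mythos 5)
Your argument is essentially correct on the points that matter, and it is worth stressing that the paper gives no proof of this proposition at all: it only asserts that ``the statement and proof are formally equal to the ones of Lemma 6.2 of \cite{Ben}''. So you have supplied a from-scratch reconstruction rather than a variant of anything in the text. The engine of your proof --- a shortest chain of direct feed-offs between two lobes has no repeated lobes and therefore coincides with the reduced lobe path, so $\Delta\mapsto^{*}\Delta'$ forces $P_i\mapsto P_{i+1}$ along the whole reduced path --- is exactly the right structural fact, and the point you list as delicate is actually not: $\mapsto$ and $\mapsto^{*}$ are relations on the lobes of $\Gamma$ itself, independent of which subopuntoid subgraph one is looking at, so once the unique neighbour of $\Delta^{*}$ in the union happens to lie in $\Theta_i$, the parasite condition for $\Theta_i$ is verified verbatim. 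Your proof of the second assertion (every host is a lobe and the connecting reduced lobe path consists of hosts) is complete, because every auxiliary subgraph used there ($\Theta_1\cup\Theta_2$ and $\Xi$) has a finite lobe tree, so only the finite case of the maximality statement is ever invoked.

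The one genuine soft spot is precisely the one you flagged and deferred: maximality of a host among \emph{all} parasite-free subopuntoid subgraphs when the competitor $\Theta'$ has an infinite lobe tree. Your leaf-extraction needs a leaf of $\Theta'$ outside $\Theta$, and an infinite subtree need not have any leaves. Moreover, the local analysis only shows that an infinite parasite-free $\Theta'\supsetneq\Theta$ would have to be leafless outside $\Theta$, with each lobe directly feeding its children away from $\Theta$ --- and nothing in the definitions visibly forbids such a configuration (a complete opuntoid graph whose lobe tree is an infinite leafless tree, every lobe feeding off a single one-lobe host, would itself be a parasite-free subopuntoid subgraph properly containing that host). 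So ``handled exactly as in Lemma 6.2 of \cite{Ben}'' is not a safe deferral: either ``maximal'' must be read as maximal among parasite-free subopuntoid subgraphs with finite lobe tree (which is all the paper ever uses, and all your second-assertion argument needs), or a further argument specific to the Sch\"utzenberger graphs at hand is required. I would state and prove the finite version explicitly and record that the second assertion is independent of the infinite case.
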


Obviously an opuntoid with finite-lobe graph has always a host, and since
the underlying graph $\Gamma_0$ of $Core(w)$ has finitely many
lobes and $S\Gamma(X,R;w)=cl_{R\cup W}(\Gamma_0)$, by Lemma 6.1 of
\cite{Ben} we derive that the \sch graph $S\Gamma(X,R;w)$ posses
always a host contained in $\Gamma_0$.\\
Moreover the union of all hosts of an opuntoid graph $\Gamma$ that
has a host is a subopuntoid subgraph of $\Gamma$, denoted by
$Host(\Gamma)$.

The closure of an opuntoid automaton $(\alpha,\Gamma,\beta)$ is
$(\alpha', cl_{R\cup W}(\Gamma),\beta')$ where $\alpha', \beta'$ are
the natural images of $\alpha$ and $\beta$ respectively in
$cl_{R\cup W}(\Gamma)$. Then we have the following description of
\sch automata that slightly extends Theorem 3 in
\cite{Finite} (see also \cite{Rthesys})

\begin{prop}\label{Prop:Schutzenberger automata}
Let $S = S_1*_U S_2$ be an amalgamated free-product of finite
inverse semigroups $S_1$ and $S_2$ amalgamating a common inverse
subsemigroup $U$, where $\la X_i | R_i\ra$ are presentations of
$S_i$ for $i = 1, 2$. Let $X = X_1\cup X_2$, $R = R_1\cup R_2$ and
$W$ be the set of all pairs $(\phi_1(u),\phi_2(u))$ for $u \in
U$. Then the \sch automata relative to $\langle X | R \cup
W\rangle$ are complete opuntoid automata possessing a host.
\end{prop}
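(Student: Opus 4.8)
The plan is to assemble the statement from the machinery already set up: $\mathcal{A}(X,R\cup W;w)$ is by construction the closure $cl_{R\cup W}(Core(w))$, where $Core(w)=(\alpha,\Gamma_0,\beta)$ is a finite opuntoid automaton closed with respect to $\langle X|R\rangle$. So there are three things to verify: (i) the underlying graph of $cl_{R\cup W}(\Gamma_0)$ is an opuntoid graph; (ii) it is complete; (iii) it possesses a host. For (i), I would observe that the directed-limit construction $\Gamma_0\hookrightarrow\Gamma_1\hookrightarrow\cdots$ uses only Construction~5, and Lemma~10 of \cite{Finite} guarantees that each $\Gamma_{m+1}$ is opuntoid with one more lobe than $\Gamma_m$. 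Being an opuntoid graph is preserved under directed unions of the relevant kind — each of the defining bullet conditions (deterministic inverse word graph, lobe graph a tree, each lobe a finite closed $DV$-quotient of a \sch graph, the loop equality property, the assimilation property) is a local condition that holds in some $\Gamma_m$ and, because $\Gamma$ is unchanged by the construction, continues to hold in the union. Hence $\varinjlim\Gamma_i=cl_{R\cup W}(\Gamma_0)$ is opuntoid.

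For completeness, I would argue by contradiction: if $\varinjlim\Gamma_i$ had a bud $\nu$ lying in some lobe $\Delta$, then $\nu$ already occurs in some $\Gamma_k$ and, since later stages do not change earlier lobes, $\nu$ is a bud of $\Gamma_k$ as well — but then a later application of Construction~5 at $\nu$ (which the fair enumeration of buds must eventually perform) turns $\nu$ into an intersection vertex, contradicting that $\nu$ is a bud in the limit. This is exactly the statement, recalled before Construction~5, that the directed limit of this process is closed with respect to $\langle X|R\cup W\rangle$; I would cite \cite{Finite} for the fact that closure with respect to $\langle X|R\cup W\rangle$ is equivalent to being a complete opuntoid graph, which follows because the only expansions of $\langle X|R\cup W\rangle$ not already accounted for by closure under $\langle X|R\rangle$ inside each monochromatic lobe are those arising from the relations in $W$, and these are precisely what a bud would allow.

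For (iii), the host, I would invoke the observation made just before Proposition~\ref{Prop:host set}: $Core(w)=(\alpha,\Gamma_0,\beta)$ has a finite lobe graph, so by Lemma~6.1 of \cite{Ben} $\Gamma_0$ itself contains a parasite-free subopuntoid subgraph $\Theta$ whose lobe tree is finite and off which every other lobe of $\Gamma_0$ feeds; and since $cl_{R\cup W}(\Gamma_0)=S\Gamma(X,R\cup W;w)$ is obtained from $\Gamma_0$ purely by repeated Construction~5, every lobe of the closure not in $\Theta$ feeds off some lobe of $\Gamma_0$, hence (composing feeding-off relations) off some lobe of $\Theta$. Thus $\Theta$ satisfies the three defining conditions of a host of $S\Gamma(X,R\cup W;w)$. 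Finally, attaching the initial and terminal states $\alpha',\beta'$ (the natural images of $ww^{-1}\tau$ and $w\tau$) turns all of this into the assertion about \sch \emph{automata}.

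The main obstacle I anticipate is the bookkeeping in step~(i): one must be careful that the local conditions really do transfer to the directed limit, in particular the loop equality and assimilation properties at intersection vertices, since a priori a later lobe attached at $\nu$ could interact with the loop set $\mathcal{L}_U(\nu,\cdot)$. The key point that makes it go through is the remark, stated in the excerpt, that Construction~5 leaves $\Gamma$ unchanged and only adds a new lobe whose attaching data at $\nu$ is forced to match the existing $\mathcal{L}_U(\nu,\Delta)$ by the very definition of the net $N(x,\Lambda)$ and the quotient by $\kappa$; so the opuntoid conditions are \emph{monotone} along the chain rather than merely holding cofinally. A careful but routine induction, together with the uniqueness of the closure from \cite{Steph,Steph98}, closes this gap.
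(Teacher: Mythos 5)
Your proposal is correct and follows essentially the route the paper itself takes: the paper states this proposition without proof as a slight extension of Theorem~3 of \cite{Finite}, and the ingredients you assemble (Lemma~10 of \cite{Finite} for Construction~5 preserving the opuntoid conditions, the directed limit being closed, and the host obtained from the finite lobe graph of $Core(w)$ via Lemma~6.1 of \cite{Ben} together with transitivity of the feeding-off relation) are exactly the ones the paper sets up in the paragraphs immediately preceding the statement. Your filling-in of the monotonicity of the loop equality and assimilation properties along the chain is a reasonable elaboration of what the cited sources are taken to provide.
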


It is important for the sequel to point out the differences
between the definitions of opuntoid graphs, feeding off relation
and \sch automata given in Bennett's papers \cite{Ben,Ben2}
and the ones presented here. According to Bennett, lobes of
opuntoid graphs are \sch graphs relative to $\langle
X_i|R_i\rangle$ for some $i\in\{1,2\}$, instead here lobes are in general
$DV$-quotients of such graphs. The lower bound equality property of
opuntoid graphs according to Bennett is here replaced by the loop
equality property that, as remarked in \cite{Finite} p.10,
coincides with lower bound equality property in the case that
lobes are \sch graphs. In \cite{Ben} a lobe $\Delta'$ colored $i$
directly feeds off $\Delta$ in a vertex $\nu$ if
$\Delta'=S\Gamma(X_i,R_i;f(e_{3-i}(\nu)))$ while in our case some
$DV$-quotient is in general needed to guarantee the loop equality
property in $\nu$. Lastly, \sch automata of lower bounded amalgams
are completely characterized as complete opuntoid automata with
hosts, \sch automata of amalgams of finite semigroups are only
described in such a way because complete opuntoid automata with
hosts are not necessarily \sch automata.

\section{Automorphisms of opuntoid graphs}\label{sec: automorphism}

We shall adopt the same notion as in  the previous section for the presentations of $S_1$, $S_2$ and $S_1*_US_2$ and all the considered opuntoid graphs shall be
assumed to be determined by these presentations. The main result proved here, shows that there is a group isomorphism between the automorphism group
of a complete opuntoid graph $\Gamma$ and the automorphism group
of the subopuntoid $Host(\Gamma)$ formed by the union of all the hosts of $\Gamma$.
\\
Also in our case we can state Lemma 6 and Lemma 7 of \cite{Ben2},
namely the proofs of these lemmas in \cite{Ben2} only use the
facts that the lobe graphs are trees, homomorphisms of graphs
preserve labels (and so colors) of edges and automorphisms of
deterministic inverse word graphs that agree on a vertex are
equal.
\begin{lemma}\label{lemma:InducedAuthomorphismOnTree} Let $\Gamma,\
\Gamma'$ be opuntoid graphs and let
$\varphi:\Gamma\rightarrow\Gamma'$ be a homomorphism. Then $\varphi$
is an isomorphism if and only if it induces an automorphism of lobe
trees and maps lobes isomorphically onto lobes.
\end{lemma}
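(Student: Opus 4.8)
The statement to prove is Lemma~\ref{lemma:InducedAuthomorphismOnTree}: a homomorphism $\varphi\colon\Gamma\rf\Gamma'$ of opuntoid graphs is an isomorphism if and only if it induces an automorphism of the lobe trees and maps lobes isomorphically onto lobes.

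\medskip

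The plan is to prove the two implications separately, the forward direction being essentially immediate and the converse being the substantive one. First I would treat necessity: if $\varphi$ is an isomorphism, then since homomorphisms of inverse word graphs preserve edge labels and hence the colors of edges, $\varphi$ carries monochromatic connected subgraphs to monochromatic connected subgraphs and maximal ones to maximal ones, so it sends lobes bijectively onto lobes with $\varphi^{-1}$ doing the same; restricting $\varphi$ and $\varphi^{-1}$ to a lobe and its image gives mutually inverse graph homomorphisms, hence each lobe is mapped isomorphically. Adjacency of lobes corresponds to sharing a vertex, which is preserved by a bijective graph morphism, so the induced map on the vertex set of $T(\Gamma)$ extends to an isomorphism $T(\Gamma)\rf T(\Gamma')$; applied to $\varphi^{-1}$ as well, this shows the induced map on lobe trees is an automorphism in the appropriate sense.

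\medskip

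For sufficiency, assume $\varphi$ induces an automorphism $\bar\varphi$ of lobe trees and maps each lobe isomorphically onto a lobe. I would build the inverse homomorphism $\psi\colon\Gamma'\rf\Gamma$ lobe by lobe, using the tree structure to glue the pieces consistently. Fix a base lobe $\Delta_0$ of $\Gamma$; on $\Delta_0$ define $\psi$ on $\varphi(\Delta_0)$ as the inverse of the lobe isomorphism $\varphi|_{\Delta_0}$. Proceed by induction along $T(\Gamma)$: if $\psi$ is already defined consistently on the union of lobes at distance $\le n$ from $\Delta_0$ and $\Delta$ is a lobe at distance $n+1$, it is adjacent to a unique lobe $\Delta^-$ at distance $n$ via a single intersection vertex $\nu$ (uniqueness of the connecting vertex being forced because the lobe graph is a tree and two distinct intersection vertices of the same pair of lobes would create a cycle among lobes); define $\psi$ on $\varphi(\Delta)$ as $(\varphi|_{\Delta})^{-1}$, and observe that at $\nu$ this agrees with the already-defined value because both equal the unique preimage of $\varphi(\nu)$ under the deterministic inverse word graph $\Gamma$ — here I invoke the principle, quoted in the excerpt just before the lemma, that automorphisms (more generally partial maps into) deterministic inverse word graphs that agree at one vertex agree on the whole connected component. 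Since $\bar\varphi$ is a bijection on lobes, every lobe of $\Gamma'$ is hit exactly once and $\psi$ is defined on all of $\Gamma'$; it is a graph homomorphism because it is locally so on each lobe and the lobes cover $\Gamma'$ with overlaps only at intersection vertices where the definitions match. Finally $\psi\circ\varphi$ and $\varphi\circ\psi$ are identity maps: each is a homomorphism of a deterministic inverse word graph to itself fixing a vertex, hence the identity on the connected component, and opuntoid graphs are connected.

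\medskip

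The main obstacle I anticipate is the consistency of the inductively defined $\psi$ at intersection vertices, and more precisely ruling out that a lobe of $\Gamma'$ receives conflicting definitions. This requires that $\varphi$ restricted to the subgraph spanned by lobes in a subtree remains injective, which follows from determinism plus the tree structure: any two distinct vertices identified by $\varphi$ would lie in lobes mapping to the same lobe of $\Gamma'$, contradicting that $\bar\varphi$ is injective on lobes together with the fact that each lobe map is an isomorphism. Making this rigorous — that injectivity on lobes plus isomorphism on each lobe upgrades to global injectivity on the opuntoid graph — is the technical heart, and it is exactly the place where one uses that $T(\Gamma)$ is a tree so there are no nontrivial cycles of lobes to create coincidences; I would phrase it as: a reduced path in $\Gamma$ between vertices in different lobes must cross the intersection vertices dictated by the unique reduced lobe path, and $\varphi$ preserves this combinatorial skeleton, so it cannot collapse distinct vertices.
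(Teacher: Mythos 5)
Your overall strategy (forward direction by transporting structure along $\varphi^{-1}$; converse by gluing the local inverses $(\varphi|_{\Delta})^{-1}$ along the lobe tree and invoking the principle that maps of connected deterministic inverse word graphs agreeing at a vertex agree everywhere) is in the spirit of what the paper does, which is simply to import Lemmas 6 and 7 of Bennett's paper with the remark that their proofs use only the tree structure of the lobe graph, label preservation, and that agreement-at-a-vertex principle. However, your proof of the converse has a genuine gap exactly at the point you yourself single out as the technical heart. The parenthetical claim that the connecting vertex between $\Delta$ and $\Delta^-$ is unique because ``two distinct intersection vertices of the same pair of lobes would create a cycle among lobes'' is false: adjacency of two lobes contributes a single edge to the lobe graph no matter how many vertices they share, so multiple intersection vertices create no cycle. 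Indeed the definition of an opuntoid graph explicitly contemplates adjacent lobes with several intersection vertices --- that is precisely what the assimilation property is there to control. As a result, your consistency check for $\psi$ is performed at only one shared vertex, and you never exclude the possibility that $V(\varphi(\Delta^-))\cap V(\varphi(\Delta))$ is strictly larger than $\varphi\bigl(V(\Delta^-)\cap V(\Delta)\bigr)$, i.e.\ that $\varphi$ identifies a vertex $a$ of $\Delta^-$ with a distinct vertex $b$ of $\Delta$. Your closing ``combinatorial skeleton'' argument does not reach this case, because the colliding image vertex is a perfectly legitimate intersection vertex of $\Gamma'$ lying on the correct reduced lobe path; nothing about preserving the skeleton forbids it.

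The missing ingredient is the assimilation property (equivalently, the related pair separation property). Concretely: if $\mu\in V(\varphi(\Delta^-))\cap V(\varphi(\Delta))$ with $\mu\neq\varphi(\nu)$, assimilation in $\Gamma'$ gives $u\in U$ with $(\varphi(\nu),u,\mu)$ a path in both image lobes; pulling back through the two lobe isomorphisms gives paths $(\nu,u,a)$ in $\Delta^-$ and $(\nu,u,b)$ in $\Delta$, and the related pair separation property in $\Gamma$ forces $a$ to be an intersection vertex of the same pair $\Delta^-,\Delta$, whence by the ``moreover'' clause of assimilation and the determinism of the lobes one gets $a=b\in V(\Delta^-)\cap V(\Delta)$. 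This both repairs the consistency of the glued $\psi$ at every shared vertex and yields the injectivity of $\varphi$ across adjacent lobes that your final paragraph asserts but does not prove. Until the assimilation property is invoked somewhere, the converse direction is not established.
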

Let $\Gamma$ be an opuntoid automaton with finitely many lobes. It is straightforward to check
that the automorphism group of $\Gamma$ embeds in the
automorphism group of some lobe of $\Gamma$ by using the fact that the automorphism group of a finite tree fixes a vertex or an edge (see \cite[Subsection 27.1.3]{Bab}).

Let $\Gamma$ be an opuntoid graph and let $\Theta$ be a subopuntoid
subgraph of $\Gamma$. For any lobe $\Delta$ not belonging to
$\Theta$ the notation $\Theta\cup\Delta$ will denote the least
subopuntoid subgraph of $\Gamma$ which contains $\Theta$ and
$\Delta$. We have the following lemma.
\begin{lemma}\label{lemma:ExtensionProperty}
Let $\Gamma,\Gamma'$ be two complete opuntoid graphs and let
$\Theta, \Theta'$  be subopuntoid subgraphs containing a host of
$\Gamma$ and $\Gamma'$ respectively. Let $\nu$ be a bud of
$\Theta$ and let $\Delta_i(\nu)$ be a lobe of $\Theta$ for some
$i\in\{1,2\}$. Let $\varphi:\Theta\rightarrow\Theta'$ be an
isomorphism and let $\nu'=\varphi(\nu)$. Then $\varphi$ can be
extended to an isomorphism from $\Theta\cup\Delta_{3-i}(\nu)$ onto
$\Theta'\cup\Delta_{3-i}(\nu')$.
\end{lemma}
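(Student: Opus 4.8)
The idea is to build the extension of $\varphi$ lobe by lobe, using the fact that $\Delta_{3-i}(\nu)$ is, up to isomorphism, \emph{determined} by local data at $\nu$ that $\varphi$ already transports to $\nu'$. First I would observe that since $\Gamma$ is complete and $\nu$ is a bud of $\Theta$ (so $\mathcal{L}_U(\nu,\Delta_i(\nu))\neq\emptyset$), $\nu$ is an intersection vertex of $\Gamma$; hence $\Delta_{3-i}(\nu)$ exists in $\Gamma$ and, by the definition of hosts together with completeness, it feeds off $\Delta_i(\nu)$ (it cannot feed off any other lobe, since removing $\Delta_i(\nu)$ disconnects it from $\Theta$ in the lobe tree). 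Thus $\Delta_{3-i}(\nu)$ is obtained by Construction 5 applied at $\nu$ inside the lobe $\Delta_i(\nu)$: it is the deterministic quotient $\Lambda/\rho$ of the \sch graph $\mathcal{A}(X_{3-i},R_{3-i};f)$ where $f=f(e_i(\nu))$ and $\rho$ collapses the net $N(x,\Lambda)$ determined by $\mathcal{L}_U(\nu,\Delta_i(\nu))$, the gluing being along the $U$-paths at $\nu$.

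The key point is then that \emph{all} of this data is preserved by $\varphi$. Since $\varphi$ restricts to an isomorphism $\Delta_i(\nu)\to\Delta_i(\nu')=\varphi(\Delta_i(\nu))$ (Lemma~\ref{lemma:InducedAuthomorphismOnTree}), and isomorphisms of inverse word graphs preserve labels, $\varphi$ maps loops based at $\nu$ in $\Delta_i(\nu)$ bijectively onto loops based at $\nu'$ in $\Delta_i(\nu')$ with the \emph{same} labels; in particular $\mathcal{L}_U(\nu,\Delta_i(\nu))=\mathcal{L}_U(\nu',\Delta_i(\nu'))$ as subsets of $U$, whence $e_i(\nu)=e_i(\nu')$ and $f(e_i(\nu))=f(e_i(\nu'))$. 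Because $\Gamma'$ is complete and $\nu'$ is a bud of $\Theta'$ with $\Delta_i(\nu')$ a lobe of $\Theta'$, the same analysis shows $\Delta_{3-i}(\nu')$ is obtained by Construction 5 at $\nu'$ from $\mathcal{A}(X_{3-i},R_{3-i};f)$ with the \emph{same} net relation $\rho$ (it depends only on $f$ and on $\mathcal{L}_U(\cdot)$, which coincide). Hence there is a canonical isomorphism $\psi:\Delta_{3-i}(\nu)\to\Delta_{3-i}(\nu')$ sending the copy of $x\rho$ attached at $\nu$ to the copy attached at $\nu'$, and by construction $\psi$ agrees with $\varphi$ on the intersection $V(\Delta_i(\nu))\cap V(\Delta_{3-i}(\nu))$: both carry a $U$-path $(\nu,u,\mu)$ of $\Delta_{3-i}(\nu)$ to the $U$-path with the same label $u$ at $\nu'$, and by the assimilation property the intersection vertices of $\Delta_{3-i}(\nu)$ with $\Delta_i(\nu)$ are exactly the endpoints of such $U$-paths, with $\varphi$ determined on them by its action on the labelled loops at $\nu$.

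Having $\varphi$ and $\psi$ agreeing on the common part, I would glue them: define $\wt\varphi$ on $\Theta\cup\Delta_{3-i}(\nu)$ to be $\varphi$ on $\Theta$ and $\psi$ on $\Delta_{3-i}(\nu)$. Well-definedness on overlaps is exactly the previous paragraph (note $\Delta_{3-i}(\nu)$ meets $\Theta$ only inside $\Delta_i(\nu)$, since $\Theta$ is parasite-free and the lobe tree of $\Theta\cup\Delta_{3-i}(\nu)$ is a tree); that $\wt\varphi$ is a graph homomorphism follows since it is so on each piece and the pieces only share vertices. It is a bijection, with inverse obtained the same way from $\varphi^{-1}$ and $\psi^{-1}$, and its image is $\Theta'\cup\Delta_{3-i}(\nu')$ because $\varphi$ is onto $\Theta'$ and $\psi$ onto $\Delta_{3-i}(\nu')$. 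By Lemma~\ref{lemma:InducedAuthomorphismOnTree} — it induces an automorphism of lobe trees (it permutes lobes of $\Theta$ as $\varphi$ does and sends the one new lobe to the one new lobe, preserving the single extra adjacency at $\nu\mapsto\nu'$) and maps lobes isomorphically onto lobes — $\wt\varphi$ is an isomorphism.

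\textbf{Main obstacle.} The delicate step is verifying that $\varphi$ and the canonically constructed $\psi$ genuinely agree on the overlap $V(\Delta_i(\nu))\cap V(\Delta_{3-i}(\nu))$, i.e.\ that the assimilation property pins down $\varphi$ on those intersection vertices purely in terms of the labelled $U$-loops at $\nu$ — and, relatedly, that Construction 5 applied at $\nu$ and at $\nu'=\varphi(\nu)$ produces isomorphic lobes because the net relation $\rho$ depends only on $f$ and the set $\mathcal{L}_U(\nu,\Delta_i(\nu))$, both of which $\varphi$ preserves. Once this "locality of Construction 5" is nailed down, the gluing is formal.
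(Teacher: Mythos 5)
Your proof is correct and follows essentially the same route as the paper's: both identify $\Delta_{3-i}(\nu)$ and $\Delta_{3-i}(\nu')$ as the lobes produced by Construction 5 (direct feed off) at $\nu$ and $\nu'$ from the same data $f=f(e_i(\nu))$ and $\mathcal{L}_U(\nu,\Delta_i(\nu))$, which $\varphi$ preserves, obtain an isomorphism $\psi$ with $\psi(\nu)=\nu'$, and glue it to $\varphi$ along the intersection vertices. The only point you assert without argument is that $\Delta_{3-i}(\nu')$ is not already a lobe of $\Theta'$ (your claim that $\nu'$ is ``a bud of $\Theta'$''); the paper derives this from Lemma~\ref{lemma:InducedAuthomorphismOnTree} by noting that otherwise $\varphi^{-1}$ would exhibit two adjacent lobes of $\Theta$ meeting at $\nu$, contradicting that $\nu$ is a bud of $\Theta$.
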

\begin{proof}
Since $\nu$ is a bud of $\Theta$ and $\Delta_i(\nu)$ is a lobe of
$\Theta$, the lobe $\Delta_{3-i}(\nu)$ of $\Gamma$ does not belong
to $\Theta$. Moreover since $\Theta$ contains a host of $\Gamma$,
then $\Delta_i(\nu)\mapsto\Delta_{3-i}(\nu)$. Let $f=f(e_i(\nu))$
be the minimum idempotent in $U$ labelling a loop based at $\nu$
in $\Delta_i(\nu)$. Since $\phi$ preserves the labels, then
$f=f(e_i(\varphi(\nu)))$. Since $\Gamma'$ is complete,
$\varphi(\nu)$ is not a bud, hence it is an intersection vertex.
Moreover, by Lemma \ref{lemma:InducedAuthomorphismOnTree},
$\Delta_i(\varphi(\nu))$ is a lobe of $\Theta'$.  If also
$\Delta_{3-i}(\varphi(\nu))$ is a lobe of $\Theta'$ then again by
Lemma \ref{lemma:InducedAuthomorphismOnTree}
$\Delta_i(\nu)=\varphi^{-1}(\Delta_i(\varphi(\nu))$ and
$\varphi^{-1}(\Delta_{3-i}(\varphi(\nu)))$ are adjacent lobes in
$\Theta$ with intersection vertex $\nu$, hence $\nu$ is not a bud
of $\Theta$, a contradiction. So $\Delta_{3-i}(\varphi(\nu))$ is
not a lobe of $\Theta'$ and
$\Delta_{i}(\varphi(\nu))\mapsto\Delta_{3-i}(\varphi(\nu))$
because $\Theta'$ contains a host. Since
$f(e_i(\varphi(\nu)))=f(e_i(\nu))=f$ and
$\mathcal{L}_U(\nu,\Delta_i(\nu))=\mathcal{L}_U(\varphi(\nu),\Delta_i(\varphi(\nu)))$,
by definition of direct feed off, $(\nu,\Delta_{3-i}(\nu),\nu)$
and $(\varphi(\nu),\Delta_{3-i}(\varphi(\nu)),\varphi(\nu))$ are
isomorphic to the same $DV$-quotient of the \sch automaton
$\mathcal{A}(X_{3-i},R_{3-i};f)$ and so the lobes
$\Delta_{3-i}(\nu)$ and $\Delta_{3-i}(\varphi(\nu))$ are
isomorphic under an isomorphism $\psi$ such that
$\psi(\nu)=\varphi(\nu)$. So it is straightforward to see that $\varphi$ can be extended to an
isomorphism from $\Theta\cup\Delta_{3-i}(\nu)$ onto
$\Theta'\cup\Delta_{3-i}(\varphi(\nu))$ whose restriction to
$\Delta_{3-i}(\nu)$ is $\psi$.
\end{proof}

From this lemma we deduce the following property.

\begin{prop}\label{Prop:ExtensionToIsomorphism}
Let $\Gamma,\Gamma'$ be two complete opuntoid graphs and let
$\Theta, \Theta'$ be subopuntoid subgraphs containing a host of
$\Gamma,\Gamma'$ respectively. Let $\varphi:\Theta\rf\Theta'$ be
an isomorphism. Then $\varphi$ can be extended to an isomorphism
$\varphi^*:\Gamma\rf\Gamma'$.
\end{prop}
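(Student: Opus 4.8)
The plan is to extend $\varphi$ one lobe at a time, using Lemma~\ref{lemma:ExtensionProperty} as the inductive step, and then take a direct limit. First I would set up a transfinite (or, more concretely, a countable) enumeration of the lobes of $\Gamma$ that do not already belong to $\Theta$. Since $\Theta$ contains a host of $\Gamma$, every such lobe feeds off some lobe of $\Theta$, so by following the feeding-off relation $\mapsto^*$ we can order the lobes outside $\Theta$ as $\Delta^{(1)},\Delta^{(2)},\ldots$ so that each $\Delta^{(k)}$ directly feeds off a lobe that is already present in $\Theta\cup\Delta^{(1)}\cup\cdots\cup\Delta^{(k-1)}$; that is, at the moment we want to attach $\Delta^{(k)}$, its unique adjacent predecessor lobe is already in the graph built so far, and the shared intersection vertex $\nu_k$ is a bud of the current subopuntoid subgraph $\Theta_{k-1}$. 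This is exactly the hypothesis configuration of Lemma~\ref{lemma:ExtensionProperty}.

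Next I would run the induction. Set $\Theta_0=\Theta$, $\Theta_0'=\Theta'$, $\varphi_0=\varphi$. Assuming $\varphi_{k-1}:\Theta_{k-1}\rf\Theta_{k-1}'$ is an isomorphism of subopuntoid subgraphs, each of which contains a host (of $\Gamma$, resp.\ $\Gamma'$), and that $\nu_k$ is a bud of $\Theta_{k-1}$ with $\Delta_i(\nu_k)$ a lobe of $\Theta_{k-1}$, Lemma~\ref{lemma:ExtensionProperty} produces an extension $\varphi_k:\Theta_{k-1}\cup\Delta_{3-i}(\nu_k)\rf \Theta_{k-1}'\cup\Delta_{3-i}(\varphi_{k-1}(\nu_k))$. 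Put $\Theta_k=\Theta_{k-1}\cup\Delta_{3-i}(\nu_k)$ and $\Theta_k'$ the corresponding image; note $\Delta_{3-i}(\nu_k)$ is precisely $\Delta^{(k)}$ by the choice of enumeration, and $\Theta_k'$ still contains a host of $\Gamma'$ since it contains $\Theta'$. The only subtlety here is to check that the hypothesis "$\Delta_i(\nu_k)$ is a lobe of $\Theta_{k-1}$ and $\nu_k$ is a bud of $\Theta_{k-1}$" really does hold at each stage; this follows because $\nu_k$ is an intersection vertex of $\Gamma$ lying on exactly two lobes, one of which ($\Delta^{(k)}$) has not yet been added while the other (its feeding predecessor) has, so in $\Theta_{k-1}$ the vertex $\nu_k$ sits on a single lobe and, carrying a loop labelled by an idempotent of $U$ from the construction, is a bud.

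Then I would take the direct limit: $\Gamma=\varinjlim\Theta_k=\bigcup_k\Theta_k$ because the enumeration exhausts all lobes of $\Gamma$, and likewise $\bigcup_k\Theta_k'$ is a subopuntoid subgraph of $\Gamma'$ containing a host of $\Gamma'$. The maps $\varphi_k$ agree on overlaps by construction (each extends the previous), so they glue to a homomorphism $\varphi^*:\Gamma\rf\bigcup_k\Theta_k'$; it is an isomorphism onto its image because each $\varphi_k$ is. It remains to see that $\bigcup_k\Theta_k'=\Gamma'$. Here I would use completeness of $\Gamma'$: since $\bigcup_k\Theta_k'$ contains a host of $\Gamma'$ and $cl_{R\cup W}$ of a subopuntoid subgraph containing a host recovers all of $\Gamma'$, any lobe of $\Gamma'$ not in $\bigcup_k\Theta_k'$ would have to feed off it, producing a bud of $\bigcup_k\Theta_k'$; but then its image-preimage under $\varphi^*$ would be a bud of $\Gamma$, contradicting completeness of $\Gamma$. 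So $\bigcup_k\Theta_k'$ is complete, hence equals $\Gamma'$, and $\varphi^*$ is the desired isomorphism $\Gamma\rf\Gamma'$.

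The main obstacle I anticipate is the bookkeeping in the second paragraph: verifying that the lobes of $\Gamma\setminus\Theta$ can be linearly ordered so that at each step the lobe to be attached shares a bud (not an arbitrary intersection vertex) with the already-constructed part, and that this property is stable under the union operation $\Theta\cup\Delta$. In the non–lower-bounded setting, lobes are $DV$-quotients of Sch\"utzenberger graphs rather than the graphs themselves, so one must be careful that the "minimum idempotent in $U$" data $f(e_i(\nu))$ used by Construction~5 and by Lemma~\ref{lemma:ExtensionProperty} is genuinely preserved along the chain; this is where the loop equality property and the assimilation property of opuntoid graphs have to be invoked to ensure the feeding-off structure transported by $\varphi_{k-1}$ matches on both sides. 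Everything else is a routine direct-limit argument.
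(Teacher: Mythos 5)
Your proof is correct and follows essentially the same strategy as the paper's: extend $\varphi$ lobe-by-lobe using Lemma~\ref{lemma:ExtensionProperty} at a bud, exhaust $\Gamma$ by a limit, and then use completeness (applied to the inverse direction) to force surjectivity onto $\Gamma'$. The paper organizes the exhaustion by a Hausdorff-maximality argument on partial extensions instead of an explicit feeding-off-compatible enumeration of the (countable, locally finite) lobe tree, which sidesteps the ordering bookkeeping you flag as the main obstacle, but the mathematical content is the same.
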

\begin{proof}
Let $\mathcal{P}$ be the set of the pairs
$(\phi,\overline{\Gamma})$ where $\overline{\Gamma}$ is a
subopuntoid subgraph of $\Gamma$ containing $\Theta$ and $\phi$ is
a graph monomorphism of $\overline{\Gamma}$ into $\Gamma'$ such that
$\phi|_{\Theta}=\varphi$. Obviously $(\varphi,\Theta)\in
\mathcal{P}$. Let $\le$ be the natural partial order on
$\mathcal{P}$ defined by $(\phi_{1},\Gamma_{1})\le (\phi_{2},\Gamma_{2})$ if $\Gamma_{1}$ is a subopuntoid subgraph of $\Gamma_{2}$ and
$\phi_{2}|_{\Gamma_{1}}=\phi_{1}$. By Hausdorff maximality lemma there is a maximal chain
$\Omega=\{(\phi_{\alpha},\Gamma_{\alpha})\}_{\alpha\in I}$. Consider the pair
$(\widehat{\phi},\widehat{\Gamma})$ where
$\widehat{\Gamma}=\cup_{\alpha}\Gamma_{\alpha}$ and
$\widehat{\phi}(\nu)=\phi_{\alpha}(\nu)$ for $\nu\in
V(\Gamma_{\alpha})$. It is easy to show that the element
$(\widehat{\phi},\widehat{\Gamma})$ belongs to $\mathcal{P}$ and
in particular it is a maximal element of the chain $\Omega$.
\\
We claim $\widehat{\Gamma}=\Gamma$. Indeed suppose that, contrary to our claim, $\widehat{\Gamma}\neq\Gamma$.
Then $\widehat{\Gamma}$ has a
bud $\nu$ and so only one of the two lobes
$\Delta_1(\nu),\Delta_2(\nu)$ of $\Gamma$ is in
$\widehat{\Gamma}$. Assume without loss of generality that it is
$\Delta_1(\nu)$. Then by Lemma \ref{lemma:ExtensionProperty} the
monomorphism $\widehat{\phi}$ can be extended to an isomorphism
from $\widehat{\Gamma}\cup \Delta_2(\nu)$ onto a subopuntoid graph
of $\Gamma'$ against the maximality of
$(\widehat{\phi},\widehat{\Gamma})$ whence
$\widehat{\Gamma}=\Gamma$ and $\widehat{\phi}$ is an isomorphism
between the opuntoid graph $\Gamma$ onto the subopuntoid subgraph
$\widehat{\phi}(\Gamma)\subseteq \Gamma'$. Suppose
$\widehat{\phi}(\Gamma) \neq \Gamma'$, then
$\widehat{\phi}(\Gamma)$ has a bud $\mu$ and again only one of the
two lobes $\Delta_1(\mu),\Delta_2(\mu)$ of $\Gamma'$ belongs to
$\widehat{\phi}(\Gamma)$. Repeating the above argument on the
isomorphism
$\widehat{\varphi}^{-1}:\widehat{\phi}(\Gamma)\rightarrow \Gamma$,
we get $\widehat{\phi}(\Gamma)=\Gamma'$.
\end{proof}

We are now in position to prove the following proposition.

\begin{prop}\label{prop:IsomorphismOpuntoidHost}
Let $\Gamma$ be a complete opuntoid graph which posses a host.
Then the automorphism group of $\Gamma$ is isomorphic to the
automorphism group of the union of all hosts of $\Gamma$.
\end{prop}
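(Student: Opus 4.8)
The plan is to prove that the restriction map
$$\Phi : Aut(\Gamma)\longrightarrow Aut(Host(\Gamma)),\qquad \varphi\mapsto\varphi|_{Host(\Gamma)},$$
is a group isomorphism. To see that $\Phi$ is well defined one must check that every $\varphi\in Aut(\Gamma)$ carries $Host(\Gamma)$ onto itself. By Lemma~\ref{lemma:InducedAuthomorphismOnTree}, $\varphi$ maps lobes isomorphically onto lobes and induces an automorphism of the lobe tree $T(\Gamma)$; being label-preserving, it also preserves adjacency, colours, the minimum idempotents $e_i(\nu)$ and $f(e_i(\nu))$, and the sets $\mathcal{L}_U(\nu,\Delta)$, so, by the description of Construction~5, it preserves the direct-feed-off relation $\mapsto$, hence also $\mapsto^{*}$ and the closure operator $cl_{R\cup W}$ (these are exactly the invariances already exploited in the proof of Lemma~\ref{lemma:ExtensionProperty}). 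Consequently $\varphi$ sends parasites to parasites and hosts to hosts, and since $\varphi^{-1}$ does likewise, $\varphi$ induces a bijection on the set of all hosts of $\Gamma$ and therefore carries their union $Host(\Gamma)$ onto itself. Thus $\varphi|_{Host(\Gamma)}\in Aut(Host(\Gamma))$, and since restriction commutes with composition, $\Phi$ is a group homomorphism.

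Injectivity is immediate: an opuntoid graph is a connected deterministic inverse word graph and $Host(\Gamma)$ is nonempty, so if $\varphi|_{Host(\Gamma)}$ is the identity then $\varphi$ fixes a vertex of $\Gamma$, whence $\varphi=\mathrm{id}_{\Gamma}$ because two automorphisms of a deterministic inverse word graph that agree on a vertex coincide. For surjectivity I would invoke Proposition~\ref{Prop:ExtensionToIsomorphism}. Given $\psi\in Aut(Host(\Gamma))$, regard it as an isomorphism $Host(\Gamma)\rightarrow Host(\Gamma)$; since $Host(\Gamma)$ is a subopuntoid subgraph of the complete opuntoid graph $\Gamma$ that contains a host of $\Gamma$ (indeed it contains every host), Proposition~\ref{Prop:ExtensionToIsomorphism} applied with $\Gamma'=\Gamma$ and $\Theta=\Theta'=Host(\Gamma)$ extends $\psi$ to an isomorphism $\psi^{*}:\Gamma\rightarrow\Gamma$, i.e. an element of $Aut(\Gamma)$, with $\psi^{*}|_{Host(\Gamma)}=\psi$. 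Hence $\Phi(\psi^{*})=\psi$, so $\Phi$ is onto and therefore a group isomorphism.

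The step requiring the most care is the well-definedness: one must be certain that the three defining conditions of a host --- finiteness of its lobe tree, parasite-freeness, and the requirement that every lobe outside it feed off a lobe inside it --- are genuinely invariant under automorphisms of $\Gamma$. This reduces to the fact that the feeding-off relation is an isomorphism invariant, which in turn follows from the description of direct feed off in terms of the idempotent $f(e_i(\nu))$ and a $DV$-quotient of the \sch automaton $\mathcal{A}(X_{3-i},R_{3-i};f(e_i(\nu)))$ given in Construction~5. Once this is granted, the rest is routine bookkeeping assembling Propositions~\ref{Prop:host set} and~\ref{Prop:ExtensionToIsomorphism}.
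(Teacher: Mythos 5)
Your proposal is correct and follows essentially the same route as the paper: the paper likewise shows that automorphisms of $\Gamma$ preserve the direct feed-off relation (via the invariance of $f(e_i(\nu))$ and $\mathcal{L}_U(\nu,\Delta)$ and the description of $\Delta'$ as a quotient of $\mathcal{A}(X_{3-i},R_{3-i};f)$), hence map hosts to hosts, and obtains surjectivity of the restriction map by extending any automorphism of $Host(\Gamma)$ to $\Gamma$ using Proposition~\ref{Prop:ExtensionToIsomorphism}. Your explicit injectivity argument (agreement on a vertex of a deterministic inverse word graph forces equality) is left as ``straightforward'' in the paper but is exactly the intended justification.
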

\begin{proof}
Let $Host(\Gamma)$ be the union of all hosts of $\Gamma$. We
know from Proposition \ref{Prop:ExtensionToIsomorphism}, that each
$\phi\in Aut(Host(\Gamma))$ can be extended to an
automorphism $\varphi\in Aut(\Gamma)$. We prove that $\varphi$
preserves the feed off relation. Assume that
$\Delta\mapsto\Delta'$ and let $\nu\in V(\Delta)\cap V(\Delta')$.
Then obviously $\varphi(\nu)\in V(\varphi(\Delta))\cap
V(\varphi(\Delta'))$, moreover, if $i\in \{1,2\}$ is the color of
$\Delta$, then $f(e_i(\nu))= f(e_i(\varphi(\nu)))$ and
$\mathcal{L}_U(\nu,\Delta)=\mathcal{L}_U(\varphi(\nu),\varphi(\Delta))$.
Let $f=f(e_i(\nu))$, since $\Delta\mapsto\Delta'$ then
$$
(\nu,\Delta',\nu)\simeq \mathcal{A}(X_{3-i},R_{3-i};f)/\rho
$$
where $\rho$ is the least equivalence relation that identifies the initial
vertex $\nu$ of $\mathcal{A}(X_{3-i},R_{3-i};f(e_i(\nu)))$ with the
final vertices $y$ of all the paths $(\nu,u,y)$ with $u\in
\mathcal{L}_U(\nu,\Delta)$ and makes the quotient deterministic. Thus 
$$
(\varphi(\nu),\varphi(\Delta'),\varphi(\nu))\simeq
\mathcal{A}(X_{3-i},R_{3-i};f)/\rho
$$
and so by the definition of direct feed off, $\varphi(\Delta)\mapsto\varphi(\Delta')$. Therefore $\varphi$ sends hosts into hosts and so $\varphi|_{Host(\Gamma)}$
belongs to $Aut(Host(\Gamma))$. It is straightforward to
check that the map $\chi$ defined by
$\chi(\varphi)=\varphi|_{Host(\Gamma)}$ is a group
isomorphism from $Aut(\Gamma)$ onto $Aut(Host(\Gamma))$.
\end{proof}

\section{Union of hosts of \sch graphs}\label{sec: union of hosts}

In this section we consider the union of hosts of a \sch graph of
the free amalgamated product $S_1*_U S_2$ of the finite inverse
semigroups $S_1$,$S_2$. First we characterize the \sch graphs
of the free amalgamated product $S_1*_U S_2$ of the finite inverse
semigroups $S_1$,$S_2$ with more than one host. We need to recall
some results from \cite{RoByci}.

\begin{prop}\label{prop:hosts which are sch} \cite[Proposition
10]{RoByci}
\\
Let $\Gamma$ be an opuntoid graph. Let $\Delta, \Delta'$ be
two lobes of $\Gamma$ colored respectively by $i, 3-i$ for some $i
= 1, 2$ with $\Delta\mapsto\Delta'$. Let $\nu\in V(\Delta)\cap
V(\Delta')$ be an intersection vertex of $\Gamma$. Then $f =
f(e_i(\nu)) = e_{3-i}(\nu)\in E(U)$. Conversely if $\Delta
\simeq S\Gamma(X_i,R_i; f)$ is a lobe  of $\Gamma$ and $f\in E(U)$,
then $\Delta' \simeq S\Gamma(X_{3-i},R_{3-i}; f)$ is a lobe of
$\Gamma$ and $\Delta'\mapsto\Delta$.
\end{prop}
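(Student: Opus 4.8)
The plan is to prove the two implications separately, in each case reducing the geometry to an elementary computation inside the finite inverse subsemigroup $\mathcal{L}_U(\nu,\Delta)$ of $U$.

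\smallskip
\noindent\emph{Direct implication.} First I would isolate what actually has to be shown. By definition $f=f(e_i(\nu))$ is the minimum idempotent of $\mathcal{L}_U(\nu,\Delta)$, so $f\in E(U)$; the loop equality property at the intersection vertex $\nu$ gives $\mathcal{L}_U(\nu,\Delta)=\mathcal{L}_U(\nu,\Delta')$, whence also $f=f(e_{3-i}(\nu))$. Since $f\in E(U)$ labels a loop at $\nu$ in $\Delta'$ we get $e_{3-i}(\nu)\le f$ for free; and conversely, \emph{once} $e_{3-i}(\nu)\in E(U)$ it lies in $\mathcal{L}_U(\nu,\Delta')$ and so $e_{3-i}(\nu)\ge f(e_{3-i}(\nu))=f$. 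Thus everything reduces to proving $e_{3-i}(\nu)=f$, i.e. that the minimum idempotent of $S_{3-i}$ labelling a loop at $\nu$ in $\Delta'$ is $f$. I would then unwind $\Delta\mapsto\Delta'$ via Construction~5: $\Delta'$ is a closed $DV$-quotient of $S\Gamma(X_{3-i},R_{3-i};f)$, the vertex $\nu$ is the image of the base vertex of that \sch graph (the vertex representing the idempotent $f$ in its $\mathcal{R}$-class in $S_{3-i}$), and the defining quotient collapses the net $N=\{\,fu : u\in\mathcal{L}_U(\nu,\Delta)\,\}$ onto that base.

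\smallskip
\noindent\emph{The main obstacle.} The hard part is showing this collapse — net identification, then determinisation, then the $\kappa$-identifications of Construction~5 — cannot create at $\nu$ a loop labelled by an idempotent strictly below $f$. The key is that $\mathcal{L}:=\mathcal{L}_U(\nu,\Delta)$ is a \emph{finite} inverse subsemigroup of $U$ with minimum idempotent $f$: for $u\in\mathcal{L}$ one checks $fu \mathcal{R} f$ in $U$, that $u^{-1}fu\in\mathcal{L}$ is an idempotent $\mathcal{D}$-related to $f$, and hence, by finiteness of $U$ and $u^{-1}fu\ge f$, that $u^{-1}fu=f$, so that $fu=uf$. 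Consequently every net vertex $fu$ already carries minimum idempotent loop-label $f$ in $S\Gamma(X_{3-i},R_{3-i};f)$, and the $\rho$-class of the base is exactly $\{\,fu:u\in\mathcal{L}\,\}$; so a word labelling a loop at the collapsed base has image $v\in S_{3-i}$ with $fv=fu$ for some $u\in\mathcal{L}$, and if $v$ is idempotent then $fv$ is an idempotent of $\mathcal{L}$, forcing $fu\le f$ and $fu\ge f$, so $fu=f$ and $v\ge f$. This gives $e_{3-i}(\nu)=f\in E(U)$. Finally, the $\kappa$-identifications are all induced by elements of $U$ labelling loops at $\nu$ in $\Delta$, hence lying in $\mathcal{L}$, so they likewise cannot produce at $\nu$ a loop with idempotent label below the minimum idempotent $f$ of $\mathcal{L}$.

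\smallskip
\noindent\emph{Converse.} Here I would work inside a \emph{complete} opuntoid graph $\Gamma$ possessing a host — e.g. a \sch graph of $S_1*_US_2$, by Proposition~\ref{Prop:Schutzenberger automata} — since for a lobe of an arbitrary opuntoid graph the adjacent lobe $\Delta'$ need not exist. Let $x_f$ be the image of the base vertex under $\Delta\simeq S\Gamma(X_i,R_i;f)$. Computing loops at the base of the full \sch graph of $f$ and using injectivity of $\phi_i$ gives $\mathcal{L}_U(x_f,\Delta)=\{\,u\in U : fu=f\,\}$, a set depending only on $U$ and containing $f$; in particular $e_i(x_f)=f(e_i(x_f))=f\in E(U)$. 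As $\mathcal{L}_U(x_f,\Delta)\ne\emptyset$ and $\Gamma$ is complete, $x_f$ is not a bud, hence is an intersection vertex, so $\Delta':=\Delta_{3-i}(x_f)$ is a lobe of $\Gamma$ coloured $3-i$, and by loop equality $\mathcal{L}_U(x_f,\Delta')=\mathcal{L}_U(x_f,\Delta)$. Applying Construction~5 to $\Delta$ at $x_f$: because $fu=f$ for all $u\in\mathcal{L}_U(x_f,\Delta)$ and $\phi_{3-i}$ is injective, the net reduces to the single base vertex and the $\kappa$-identifications only identify vertices already equal, so the new lobe is isomorphic to $S\Gamma(X_{3-i},R_{3-i};f)$; since $\Gamma$ is closed and contains $\Delta$, this new lobe lies in $\Gamma$ as a lobe at $x_f$ coloured $3-i$, hence coincides with $\Delta'$, giving $\Delta\mapsto\Delta'$ and $\Delta'\simeq S\Gamma(X_{3-i},R_{3-i};f)$. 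Running Construction~5 symmetrically from $\Delta'$ at $x_f$ again has trivial net and reproduces $\Delta$, so $\Delta'\mapsto\Delta$, as required.
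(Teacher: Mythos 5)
The paper itself offers no proof of this proposition --- it is imported verbatim from \cite[Proposition 10]{RoByci} --- so there is no in-paper argument to measure yours against; what follows is an assessment of your proposal on its own terms.

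Your direct implication is essentially right, and it isolates the correct engine: $\mathcal{L}=\mathcal{L}_U(\nu,\Delta)$ is a finite inverse subsemigroup of $U$ with minimum idempotent $f$, so for $u\in\mathcal{L}$ the idempotent $u^{-1}fu=(fu)^{-1}(fu)$ lies in $\mathcal{L}$, is $\mathcal{D}^U$-related to $f$ and $\geq f$, hence equals $f$ by finiteness; therefore $fu=uf$, every net vertex carries minimum loop-idempotent $f$, and the collapse cannot manufacture an idempotent loop at $\nu$ strictly below $f$. Two points are asserted rather than proved and deserve a line each: that the $\rho$-class of the base is \emph{exactly} the net (determinisation could a priori enlarge it; it does not, because in an inverse semigroup $a\geq b$ together with $a\,\mathcal{R}\,b$ forces $a=b$), and that the lifting property you invoke for $\rho$ on the lobe is available (you are borrowing the statement of Proposition~\ref{prop:lifting}, which is phrased for $\Xi$ on the whole graph). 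Also note the statement lets $\nu$ be \emph{any} intersection vertex of $\Delta\cap\Delta'$, not only the one at which Construction~5 was performed; the general case follows by conjugating by the $U$-labelled path joining the two intersection vertices (assimilation property), and this reduction should be recorded.

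The converse contains a genuine gap at the sentence ``since $\Gamma$ is closed and contains $\Delta$, this new lobe lies in $\Gamma$ as a lobe at $x_f$ coloured $3-i$, hence coincides with $\Delta'$.'' Closedness with respect to $\langle X|R\cup W\rangle$ only yields that $\phi_{3-i}(f)$ labels a loop at $x_f$, i.e.\ $e_{3-i}(x_f)\leq\phi_{3-i}(f)$, and hence that $S\Gamma(X_{3-i},R_{3-i};f)$ maps homomorphically into $\Delta_{3-i}(x_f)$; it does \emph{not} exclude that $\Delta_{3-i}(x_f)$ is strictly larger --- a closed $DV$-quotient of $S\Gamma(X_{3-i},R_{3-i};g)$ for some $g\in E(S_{3-i})\setminus\phi_{3-i}(E(U))$ with $g<\phi_{3-i}(f)$ whose $U$-loop set at $x_f$ still agrees with that of $\Delta$ --- nor that it is a proper quotient of $S\Gamma(X_{3-i},R_{3-i};f)$. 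For a general complete opuntoid graph with a host (your chosen setting) such configurations can be built by hand, and there the feeding relation points the wrong way ($\Delta$ would feed off $\Delta_{3-i}(x_f)$, not conversely), so the converse as you argue it is actually false at that level of generality. What is needed is the hypothesis that $\Gamma$ is a Sch\"utzenberger graph of the amalgam together with the structural input (in effect the normal-form/lifting results of \cite{RoByci} of which Proposition~\ref{prop:lifting} is a sample) showing that the minimum idempotent of $S_{3-i}$ labelling a loop at $x_f$ is exactly $\phi_{3-i}(f)$ and that no further vertex identifications occur on that lobe; only then does ``the output of Construction~5 at $x_f$'' coincide with the actual adjacent lobe, giving both $\Delta'\simeq S\Gamma(X_{3-i},R_{3-i};f)$ and $\Delta'\mapsto\Delta$.
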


\begin{prop}\label{prop:lifting} \cite[Theorem 23 and Proposition
18]{RoByci}
\\
Let $\Delta, \Delta'$ be two lobes of $S\Gamma(X,R\cup W;w)$
colored respectively by $i, 3-i$ for some $i = 1, 2$ with
$\Delta\mapsto\Delta'$. Let $\nu\in V(\Delta)\cap V(\Delta')$,
$f\in E(U)$ such that $(\nu,\Delta',\nu)\simeq
(x\rho,S\Gamma(X_{3-i}, R_{3-i};f)/\rho,x\rho)$ where $\rho$ is
the least equivalence relation on $S\Gamma(X_{3-i}, R_{3-i};f)$
which identifies the net $N(x, S\Gamma(X_{3-i}, R_{3-i};f))$ and
makes $S\Gamma(X_{3-i}, R_{3-i};f)/\rho$ deterministic. Then
$$
(\nu,S\Gamma(X,R\cup W;f),\nu)\simeq (x\Xi,S\Gamma(X,R\cup
W;f)/\Xi,x\Xi)
$$
where
$$
\Xi\subseteq V(S\Gamma(X,R\cup W;f))\times
V(S\Gamma(X,R\cup W;f))
$$
is defined by: $q\Xi q'$ if there are $y,y'\in N(x,S\Gamma(X_{3-i}, R_{3-i};f))$ and
$t\in(X\cup X^{-1})^*$ such that $(y,t,q)$ and $(y',t,q')$ are
paths in $S\Gamma(X,R\cup W;f)$.
\\
Moreover the following lifting property for $\Xi$ holds: if $(p\Xi,h,q\Xi)$ is a
path in $S\Gamma(X,R\cup W;f)/\Xi$ then for each $p\in
p\Xi$ there is a path $(p,h,q')$ in $S\Gamma(X,R\cup W;f)$ with
$q'\in q\Xi$.
\end{prop}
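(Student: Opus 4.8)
The plan is to follow the same strategy that one uses to describe the \sch graph of an idempotent via a $DV$-quotient of a larger \sch graph, now applied to the amalgamated presentation. The starting point is the observation (from \cite{Steph}) that if $\Gamma'$ is a deterministic inverse word graph obtained from a \sch graph $S\Gamma(Y,P;v)$ by a sequence of $V$-quotients and expansions relative to $\langle Y|P\rangle$, and if $\Gamma'$ is closed, then $\Gamma'$ is (isomorphic to) $S\Gamma(Y,P;v)$; more generally, quotients and closures commute in a controlled way. Here $v=f$, $Y=X$, $P=R\cup W$, and the quotient graph $(\nu,\Delta',\nu)$ of $S\Gamma(X_{3-i},R_{3-i};f)$ is the first lobe produced by Construction 5.

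First I would make precise the relation $\Xi$: one checks directly that $\Xi$ is an equivalence relation on $V(S\Gamma(X,R\cup W;f))$ (reflexivity using $y=y'$ and $t$ empty, symmetry obvious, transitivity by concatenating witnessing paths and using determinism), and that it refines into a congruence compatible with the edge structure, so that $S\Gamma(X,R\cup W;f)/\Xi$ is again a well-defined inverse word graph. The key point is that $\Xi$ is precisely the restriction to the ambient \sch graph of the vertex identifications forced by $\rho$ together with the assimilation/loop-equality identifications in all the lobes that feed off. Concretely, the net $N(x,S\Gamma(X_{3-i},R_{3-i};f))$ is identified to $x$ by $\rho$, and the lifting property of Proposition \ref{prop:lifting} (the last sentence of the statement) is exactly what guarantees that pushing these identifications through the closure does not force any further collapsing beyond $\Xi$: whenever a path exists in the quotient it already lifts to each chosen fiber representative, so no new expansions are needed after forming the quotient. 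I would establish the lifting property simultaneously with the main isomorphism, by induction on the length of $h$, using determinism of $S\Gamma(X,R\cup W;f)$ at each step to track the representatives.

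Next I would build the isomorphism. On one side, by Construction 5 and the description in \cite{Finite}, $(\nu,S\Gamma(X,R\cup W;w),\nu)$ restricted to the subopuntoid generated by $\Delta$ and all lobes feeding off it (starting from $\Delta'$) is obtained as $cl_{R\cup W}$ of the opuntoid graph one gets by attaching $\Delta'$ at $\nu$; this closure, localized at the loop structure of $f$, is exactly $S\Gamma(X,R\cup W;f)$ with the net-identifications and the Construction-5 vertex identification $\kappa$ imposed. On the other side, $S\Gamma(X,R\cup W;f)/\Xi$ is a deterministic inverse word graph, closed relative to $\langle X|R\cup W\rangle$ (closedness being inherited from $S\Gamma(X,R\cup W;f)$ together with the lifting property, which shows expansions descend to the quotient without creating new folds). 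Both contain the path structure of $f$ based at the relevant vertex, so by the uniqueness of the closure (Stephen \cite{Steph,Steph98}) the natural map $S\Gamma(X,R\cup W;f)\to (\nu,S\Gamma(X,R\cup W;w),\nu)$ factors through $\Xi$ and induces an isomorphism $(x\Xi,S\Gamma(X,R\cup W;f)/\Xi,x\Xi)\simeq(\nu,S\Gamma(X,R\cup W;f),\nu)$.

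The main obstacle I expect is the bookkeeping that identifies $\Xi$ with the "honest" set of identifications coming from the closure process — that is, proving no over-collapsing and no under-collapsing. Under-collapsing (every forced identification is in $\Xi$) is the delicate direction: one must argue that the only vertices of $S\Gamma(X_{3-i},R_{3-i};f)$ that get glued when passing to the amalgam are those reachable from the net by a common word, and that iterating Construction 5 on further lobes contributes nothing new to the fiber over $x$. This is where the lifting property does the real work, and I would prove it carefully first, since Proposition \ref{prop:lifting} as stated packages it as part of the conclusion precisely because it is needed as a lemma in the later sections. The remaining verifications — that $\Xi$ is an equivalence, that the quotient is deterministic and closed, that the induced map respects initial/terminal vertices — are routine given the determinism of the ambient graph and the already-established properties of opuntoid graphs.
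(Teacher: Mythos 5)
First, a point of reference: the paper does not prove this proposition at all --- it is quoted from \cite{RoByci} (Theorem 23 and Proposition 18), so there is no internal proof to compare your attempt with. Judged on its own, your overall strategy (realize $(\nu,S\Gamma(X,R\cup W;w),\nu)$ and $(x\Xi,S\Gamma(X,R\cup W;f)/\Xi,x\Xi)$ as closures of the same data, and use Stephen's uniqueness of closures, with the lifting property guaranteeing that the quotient is already deterministic and closed so that no further folding or expansion occurs) is the natural route and very likely the one taken in \cite{RoByci}. Incidentally, the left-hand side of the displayed isomorphism in the statement should read $S\Gamma(X,R\cup W;w)$, as the paper's own use of the proposition in Theorem \ref{theo:characterization of more than one host} makes clear; you do not flag this.

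There is, however, a genuine gap, and it occurs at exactly the two points where the real work lies; in both the missing ingredient is the same. You assert transitivity of $\Xi$ ``by concatenating witnessing paths and using determinism,'' and you propose to prove the lifting property ``by induction on the length of $h$, using determinism \dots to track the representatives.'' Neither goes through as stated. For the lifting property, take a single edge $(p\Xi,a,q\Xi)$ witnessed by some $(p_0,a,q_0)$, and an arbitrary $p\in p\Xi$, so that $(y,t,p)$ and $(y',t,p_0)$ are paths from net vertices $y\neq y'$. Determinism gives no information about whether an edge labelled $a$ even exists at $p$: the two paths labelled $t$ issue from different vertices and nothing formal forces them to admit the same continuations. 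What saves the argument is algebraic. The set $L=\mathcal{L}_U(\nu,\Delta)$ is a finite inverse subsemigroup of $U$ and $f$ is its minimum idempotent, whence $fu=uf=fuf$ for every $u\in L$ and $fL$ is a subgroup of $H^U_f$; by Stephen's identification of $H_f$ with $Aut(S\Gamma(X,R\cup W;f))$, left translation by elements of $fL$ acts on $S\Gamma(X,R\cup W;f)$ by automorphisms, and a direct computation (writing $q=fu\cdot t\tau$, $q'=fu'\cdot t\tau$ and checking $q=(f uu'^{-1})q'$) shows that the $\Xi$-classes are precisely the orbits of this action. From this orbit description everything you need follows at once: $\Xi$ is an equivalence relation, the quotient is deterministic and closed, and the lifting property holds because the automorphism $s\mapsto (fuu'^{-1})s$ carries the path $(p_0,h,q_0)$ to a path based at $p$ ending in $q_0\Xi$. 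Without this semigroup-theoretic input your induction has no step that applies to all representatives of $p\Xi$, and the determinism and closedness of the quotient --- on which your entire closure-uniqueness argument rests --- remain unsupported. I would isolate the statement ``$\Xi$-classes are $fL$-orbits'' as a preliminary lemma and derive the rest from it.
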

We have the following characterization.

\begin{theorem}\label{theo:characterization of more than one host}
Let $[S_1,S_2;U]$ be an amalgam of finite inverse semigroups, let
$w\in (X\cup X^{-1})^+$. The following are equivalent:
\begin{enumerate}
    \item $S\Gamma(X,R\cup W;w)$ has more than one host.
    \item Each host of $S\Gamma(X,R\cup W;w)$ is the \sch graph of some idempotent
    of $U$ relative to the presentation $\langle X_i|R_i\rangle$ of $S_i$ for some $i\in\{1,2\}$.
    \item $ww^{-1}\mathcal{D}^{S_1*_US_2} f$ for some idempotent $f\in E(U)$.
\end{enumerate}
\end{theorem}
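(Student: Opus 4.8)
The plan is to establish the cyclic chain of implications $(1)\Rightarrow(2)\Rightarrow(3)\Rightarrow(1)$, writing $\Gamma:=S\Gamma(X,R\cup W;w)$ and recalling that $\Gamma$ is also the underlying graph of $S\Gamma(X,R\cup W;ww^{-1})$ since $w\,\mathcal{R}\,ww^{-1}$. The single structural ingredient used throughout is that a lobe $\Delta$ of an opuntoid graph, being a finite connected deterministic inverse word graph over some $X_i$ that is closed with respect to $\langle X_i\mid R_i\rangle$, coincides with the \sch graph of \emph{any} idempotent of $S_i$ labelling a loop at a prescribed base vertex (Stephen's characterisation of closed automata, \cite{Steph}); in particular, if $\Delta$ is colored $i$ and $\nu\in V(\Delta)$ then $(\nu,\Delta,\nu)\cong\mathcal{A}(X_i,R_i;v)$ for any word $v$ over $X_i$ representing $e_i(\nu)$. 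I expect this identification — that a closed $DV$-quotient really is a \sch graph — to be the only delicate point of the whole argument; everything else is bookkeeping with lobe trees, closures, and Green's relations.

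$(1)\Rightarrow(2)$. Suppose $\Gamma$ has more than one host. By Proposition~\ref{Prop:host set} every host is a lobe and the hosts form a reduced path of host-lobes in $T(\Gamma)$, so each host $\Delta$ (say colored $i$) is adjacent to a host $\Delta'$, with common vertex $\nu$. Since $\Delta$, viewed as a one-lobe subopuntoid, is a host and $\Delta'\neq\Delta$, we have $\Delta\mapsto^{*}\Delta'$, and symmetrically $\Delta'\mapsto^{*}\Delta$; as any walk in the tree $T(\Gamma)$ from $\Delta$ to the adjacent lobe $\Delta'$ must traverse the edge joining them, these force $\Delta\mapsto\Delta'$ and $\Delta'\mapsto\Delta$. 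Applying Proposition~\ref{prop:hosts which are sch} to $\Delta'\mapsto\Delta$ gives $e_i(\nu)\in E(U)$, say $e_i(\nu)=\phi_i(u)\eta_i$ with $u\in E(U)$. The structural fact then yields $(\nu,\Delta,\nu)\cong\mathcal{A}(X_i,R_i;\phi_i(u))$, that is, $\Delta\cong S\Gamma(X_i,R_i;u)$ is the \sch graph of the idempotent $u$ of $U$ relative to $\langle X_i\mid R_i\rangle$. As $\Delta$ was an arbitrary host, $(2)$ holds.

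$(2)\Rightarrow(3)$. Fix a host $\Delta\cong S\Gamma(X_i,R_i;g)$ with $g\in E(U)$; since $\Delta$ is a host, $cl_{R\cup W}(\Delta)=\Gamma$. In $S\Gamma(X,R\cup W;g)$ the colored-$i$ lobe through the base vertex $\overline{g}$ is, by the structural fact, $\cong S\Gamma(X_i,R_i;e_i(\overline{g}))$, and $e_i(\overline{g})=g$: the word $\phi_i(g)$ labels a loop at $\overline{g}$ inside that lobe (so $e_i(\overline{g})\le g$), while $e_i(\overline{g})$ dominates the minimum idempotent $g$ at $\overline{g}$ in the whole graph (using that $S_i\hookrightarrow S_1*_US_2$, \cite{Hall}). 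Hence $S\Gamma(X_i,R_i;g)$ is a lobe of $S\Gamma(X,R\cup W;g)$; by Proposition~\ref{prop:hosts which are sch} (used in both directions) it feeds off and is fed by the colored-$(3-i)$ lobe at $\overline{g}$, so it is a maximal parasite-free subopuntoid and therefore a host of $S\Gamma(X,R\cup W;g)$ by Proposition~\ref{Prop:host set}, whence $cl_{R\cup W}(S\Gamma(X_i,R_i;g))=S\Gamma(X,R\cup W;g)$. Uniqueness of the closure, together with $\Delta\cong S\Gamma(X_i,R_i;g)$, now gives $\Gamma\cong S\Gamma(X,R\cup W;g)$, i.e. $S\Gamma(X,R\cup W;ww^{-1})\cong S\Gamma(X,R\cup W;g)$, so $ww^{-1}\,\mathcal{D}^{S_1*_US_2}\,g$ by Proposition~\ref{prop:old Ste}(1). (Proposition~\ref{prop:lifting} provides an alternative, more hands-on proof of the isomorphism $\Gamma\cong S\Gamma(X,R\cup W;g)$.)

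$(3)\Rightarrow(1)$. If $ww^{-1}\,\mathcal{D}^{S_1*_US_2}\,f$ with $f\in E(U)$, then $\Gamma\cong S\Gamma(X,R\cup W;f)$ by Proposition~\ref{prop:old Ste}(1), so it suffices to exhibit two distinct hosts of $S\Gamma(X,R\cup W;f)$. Exactly as in the previous paragraph, the colored-$1$ and colored-$2$ lobes through the base vertex $\overline{f}$ are $S\Gamma(X_1,R_1;f)$ and $S\Gamma(X_2,R_2;f)$; they feed off one another by Proposition~\ref{prop:hosts which are sch}, and each is a maximal parasite-free subopuntoid, hence a host by Proposition~\ref{Prop:host set}. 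Being distinct lobes, they are distinct hosts, so $S\Gamma(X,R\cup W;f)$ — and therefore $\Gamma$ — has more than one host. This closes the cycle.
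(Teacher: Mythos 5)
Your ``single structural ingredient'' is false in the setting of this paper, and the implication $(1)\Rightarrow(2)$ collapses with it. A finite, connected, deterministic inverse word graph closed with respect to $\langle X_i\mid R_i\rangle$ need \emph{not} be a Sch\"utzenberger graph: it may be a proper $DV$-quotient of one. This is precisely the technical point on which the paper departs from Bennett's work (lobes are in general $DV$-quotients of Sch\"utzenberger graphs; Construction~5 explicitly produces $\Lambda/\rho$ with $\rho$ collapsing a nontrivial net), and the paper even exhibits the phenomenon with the $D_4$ example and devotes Theorem~\ref{lifting property} to handling the resulting nontrivial kernel. Consequently, in $(1)\Rightarrow(2)$, knowing that $e_i(\nu)\in E(U)$ for a host lobe $\Delta$ does not let you conclude $\Delta\cong S\Gamma(X_i,R_i;e_i(\nu))$ --- a priori $\Delta$ is only a quotient $\mathcal{A}(X_i,R_i;e_i(\nu))/\rho$. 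Note also that the converse half of Proposition~\ref{prop:hosts which are sch} cannot be used here as a substitute, since it requires already knowing that the \emph{neighbouring} lobe is a genuine Sch\"utzenberger graph of an idempotent of $U$; you have no such anchor when starting only from statement~(1). This is why the paper proves $(1)\Rightarrow(3)$ first (via the lifting Proposition~\ref{prop:lifting}, which needs no assumption on the nature of the host lobes) and only then $(3)\Rightarrow(2)$, by an induction along the path of hosts anchored at the genuine Sch\"utzenberger graph $S\Gamma(X_1,R_1;f)$ sitting at the base point of $S\Gamma(X,R\cup W;f)$.

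The gap is repairable, but not by the principle you invoked: from Proposition~\ref{Prop:host set} two adjacent hosts $\Delta,\Delta'$ mutually directly feed off each other (your tree-walk argument for this is fine), so both $e_i(\nu)=f(e_{3-i}(\nu))$ and $e_{3-i}(\nu)=f(e_i(\nu))$ lie in $E(U)$ and are therefore equal to a single $f\in E(U)$; one must then check that for this $f$ the net $N(x,S\Gamma(X_i,R_i;f))$ reduces to $\{x\}$ (because $fu=f$ holds in $U$, hence in $S_i$, for every $u\in\mathcal{L}_U(\nu,\Delta')$), so that Construction~5 performs no identification and $\Delta$ really is $S\Gamma(X_i,R_i;f)$. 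Your remaining implications are essentially sound: $(3)\Rightarrow(1)$ matches the paper's $2)\Rightarrow 1)$, and your $(2)\Rightarrow(3)$ via uniqueness of closures is a legitimate and arguably cleaner alternative to the paper's use of Proposition~\ref{prop:lifting} --- but it only works because hypothesis~(2) hands you a host that is already a genuine Sch\"utzenberger graph of an idempotent of $U$, which is exactly what your $(1)\Rightarrow(2)$ fails to deliver.
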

\begin{proof}
$1)\Rightarrow 3)$. Assume that $S\Gamma(X,R\cup W;w)$ has more
than one host. Then by Proposition \ref{Prop:host set} there are
(at least) two adjacent lobes of $S\Gamma(X,R\cup W;w)$ which are
hosts. Let $\nu$ be an intersection vertex between these adjacent
hosts $\Delta_i=\Delta_i(\nu)$, $i\in \{1,2\}$ and let
$f=f(e_1(\nu))$. By definition of host $cl_{R\cup
W}(\Delta_2)=S\Gamma(X,R\cup W;w)$. Moreover
$\Delta_1\mapsto\Delta_2$, so by Proposition \ref{prop:lifting}
$(\nu,S\Gamma(X,R\cup W;w),\nu)\simeq (x\Xi,S\Gamma(X,R\cup
W;f)/\Xi,x\Xi)$.  Let $e$ be an idempotent labelling a loop based
at $\nu$ in $S\Gamma(X,R\cup W;w)$ then again by Proposition
\ref{prop:lifting}, $e$ labels also a loop based at $x$ in
$S\Gamma(X,R\cup W;f)$, hence  $e\ge f$. So $f$ is the minimum
idempotent labelling a loop based at $\nu$ in $S\Gamma(X,R\cup
W;w)$ whence $S\Gamma(X,R\cup W;w)=S\Gamma(X,R\cup
W;ww^{-1})\simeq S\Gamma(X,R\cup W;f)$, then by Proposition
\ref{prop:old Ste} $ww^{-1}\mathcal{D}^{S_1*_US_2} f$.\\
$3)\Rightarrow 2)$. Put $\Delta_i=S\Gamma(X_{i},R_{i};f)$ for
$i=1,2$. By Proposition \ref{prop:old Ste} $S\Gamma(X,R\cup
W;ww^{-1})\simeq S\Gamma(X,R\cup W;f)$. Obviously $S\Gamma(X,R\cup
W;f)$ is obtained by iterated applications of Construction 5 to
$\Delta_1$, then $\Delta_1$ is a host of
$S\Gamma(X,R\cup W;w)$. Now let $\Delta$ be any host of
$S\Gamma(X,R\cup W;w)$ and assume that it is colored $j\in
\{1,2\}$. We prove that $\Delta$ is a \sch graph of some
idempotent of $U$ by induction on the length $n$ of the reduced
lobe path connecting $\Delta_1$ to $\Delta$. If $n=0$ the
statement is trivially true. So let $P:\Delta_1,\Delta_2,\ldots,\Delta_n=\Delta$ be the reduced lobe path connecting $\Delta_1$ with $\Delta$. Since $\Delta_1$ and
$\Delta_{n}=\Delta$ are hosts, by Proposition \ref{Prop:host set}
$\Delta_{n-1}$ is a host and by induction hypothesis it is a \sch
graph of some idempotent of $U$. Let $\nu \in V(\Delta_{n-1})\cap
V(\Delta_n)$. Since $\Delta_{n-1}\mapsto\Delta_{n}$, then by Proposition \ref{prop:hosts which are sch} 
$e_{3-j}(\nu)=f(e_j(\nu))\in E(U)$. Hence, since $\Delta_{n-1}$
is a \sch graph, $(\nu,\Delta_{n-1},\nu)\simeq
\mathcal{A}(X_{3-j},R_{3-j};f(e_j(\nu)))$ and so by Proposition \ref{prop:hosts which are sch} $\Delta_n\simeq S\Gamma(X_{j},R_{j};f(e_j(\nu)))$.
\\
$2)\Rightarrow 1)$. Let $\Delta$ be a host colored $i$ of
$S\Gamma(X,R\cup W;w)$. Then $\Delta\simeq S\Gamma(X_i,R_i;f)$ for
some $f\in E(U)$. Then $f=e_i(\nu)=f(e_i(\nu))$ for some $\nu\in
V(\Delta)$. Applying Construction 5 at $\nu$ one gets a new lobe
$\Delta'$ such that $\Delta'\mapsto\Delta$ by Proposition
\ref{prop:hosts which are sch}. Let $\Lambda$ be any lobe of
$S\Gamma(X,R\cup W;w)$. Since $\Delta$ is a host, then $\Lambda$
feeds off $\Delta$ that in turns directly feeds off $\Delta'$. So
$\Lambda$ feeds off $\Delta'$ and $\Delta'$ is a host.
\end{proof}

In the sequel for a \sch graph $S\Gamma(X,R\cup W;w)$ we denote
$Host(S\Gamma(w))$ the union of all its hosts. We
characterize \sch graphs $S\Gamma(X,R\cup W;e)$, such that
$Host({S\Gamma}(e))$ is an infinite graph. Since a host has
finitely many finite lobes, these opuntoids need to have infinitely
many hosts, hence by Theorem \ref{theo:characterization of more than one host}
we necessarily have $e\mathcal{D}^{S_1*_US_2} f$ for some idempotent
$f\in E(U)$. By the above Theorem in such case all hosts are
lobes which are \sch graphs of some idempotents of $U$ relative to the
presentation $\langle X_i|R_i\rangle$ of $S_i$ for some
$i\in\{1,2\}$.

\begin{defn}\label{defn:directisomorphic}
  Let $\Delta,\Delta'$ be two lobes of an opuntoid graph such that
  $\Delta'=\phi(\Delta)$ for some isomorphism $\phi$. Let $\Delta=\Delta_0,\Delta_1,\ldots,\Delta_n=\Delta'$
  be the reduced lobe path connecting $\Delta$ to $\Delta'$ and let $\nu_1\in V(\Delta_0)\cap
  V(\Delta_1)$. The isomorphism $\phi$ is called a shift-isomorphism (and $\Delta, \Delta'$ are called shift-isomorphic by $\phi$) if $\phi(\nu_1)\notin V(\Delta_{n-1})\cap V(\Delta_n)$.\\
  The lobes $\Delta,\Delta'$ are called successive isomorphic lobes
  if no $\Delta_i$ ($0<i<n$) is isomorphic to $\Delta_0$.
\end{defn}
We have the following lemma.
\begin{lemma}\label{lem:notshift-isomorphism} Let $e\in E(S_1*_U S_2)$ with $S_1,S_2$
finite inverse semigroups and let $e\mathcal{D}^{S_1*_US_2} f$ for
some idempotent $f\in E(U)$. Let $\Delta, \Delta'$ be two distinct
lobes of $Host({S\Gamma}(e))$ colored $i$ such that
$\Delta'=\phi(\Delta)$ for some $\phi\in
Aut(Host({S\Gamma}(e)))$. Let
$\Delta=\Delta_0,\Delta_1,\ldots,\Delta_n=\Delta'$ be the reduced
lobe path connecting $\Delta$ to $\Delta'$. Then either for some
$j$ with $0\leq j \leq \lfloor \frac{n}{2}\rfloor$
$\phi|_{\Delta_j}$ is a shift-isomorphism or, for all $j$ with
$0\leq j \leq \lfloor \frac{n}{2}\rfloor$,
$\phi(\Delta_j)=\Delta_{n-j}$.
\end{lemma}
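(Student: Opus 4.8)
The goal is to show that an automorphism $\phi$ sending lobe $\Delta$ to a distinct lobe $\Delta'$ along a reduced lobe path $P\colon \Delta_0,\dots,\Delta_n$ either already "shifts" some $\Delta_j$ with $j\le\lfloor n/2\rfloor$ (i.e.\ $\phi|_{\Delta_j}$ moves the entry vertex off the far intersection vertex), or else it folds $P$ onto itself exactly, sending $\Delta_j$ to $\Delta_{n-j}$ for every such $j$. The plan is to argue by contradiction: assume $\phi|_{\Delta_j}$ is \emph{not} a shift-isomorphism for every $j$ with $0\le j\le\lfloor n/2\rfloor$, and deduce $\phi(\Delta_j)=\Delta_{n-j}$ for all such $j$ by induction on $j$.

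\textbf{Base case.} First I would locate $\phi(P)$ inside the lobe tree. By Lemma~\ref{lemma:InducedAuthomorphismOnTree}, $\phi$ induces an automorphism of the lobe tree $T(Host(S\Gamma(e)))$ and maps lobes isomorphically onto lobes, so $\phi(P)$ is the reduced lobe path from $\phi(\Delta_0)=\Delta$ (careful: $\phi(\Delta_0)=\Delta'$ actually, since $\Delta_0=\Delta$) — that is, $\phi(P)$ is the reduced lobe path from $\Delta'=\Delta_n$ to $\phi(\Delta_n)$. Now $\Delta_0$ and $\Delta_n=\Delta'$ both lie on $P$, and since all hosts along the reduced path joining two hosts are themselves hosts and hence lobes which are \sch graphs of idempotents of $U$ (Theorem~\ref{theo:characterization of more than one host} together with Proposition~\ref{Prop:host set}), the path $P$ is a path in the lobe tree of the host union. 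The key geometric input is that $\phi$ maps $\Delta_0$ to $\Delta_n$ and $\Delta_1$ (adjacent to $\Delta_0$) to a lobe adjacent to $\Delta_n$, namely either $\Delta_{n-1}$ or a lobe leaving $P$; the ``not a shift-isomorphism'' hypothesis at $j=0$ pins down which: $\phi(\nu_1)\in V(\Delta_{n-1})\cap V(\Delta_n)$, where $\nu_1\in V(\Delta_0)\cap V(\Delta_1)$, and since a deterministic inverse word graph admits at most one edge of each label from each vertex, the intersection vertex of two adjacent lobes is unique on each side; hence $\phi(\Delta_1)$ is the lobe adjacent to $\Delta_n$ through $\phi(\nu_1)$, which is $\Delta_{n-1}$. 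This gives $\phi(\Delta_1)=\Delta_{n-1}$, the first nontrivial case.

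\textbf{Inductive step.} Suppose $\phi(\Delta_j)=\Delta_{n-j}$ for all $j\le k$, with $k<\lfloor n/2\rfloor$. Let $\nu_{k+1}\in V(\Delta_k)\cap V(\Delta_{k+1})$ be the entry vertex of $\Delta_{k+1}$ along $P$. Applying $\phi$, we get $\phi(\nu_{k+1})\in V(\Delta_{n-k})\cap V(\phi(\Delta_{k+1}))$, so $\phi(\Delta_{k+1})$ is a lobe adjacent to $\Delta_{n-k}$. The ``not a shift-isomorphism'' hypothesis now applies to the pair $\Delta_{k+1},\phi(\Delta_{k+1})$ via $\phi|_{\Delta_{k+1}}$: its reduced lobe path, being a subpath-translate of $P$ inside the tree, connects $\Delta_{k+1}$ to $\Delta_{n-k-1}$ on one side, and the condition forces the image of its entry vertex to land on the far intersection vertex rather than off it, which by the uniqueness of intersection vertices (determinism) identifies $\phi(\Delta_{k+1})=\Delta_{n-k-1}$. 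A little care is needed with the orientation: I must check that $\phi(\Delta_{k+1})$ is $\Delta_{n-k-1}$ and not $\Delta_{n-k+1}$; but $\Delta_{n-k+1}=\phi(\Delta_{k-1})$ by the induction hypothesis, and $\Delta_{k+1}\ne\Delta_{k-1}$ since $P$ is reduced (no repeated lobes), so injectivity of $\phi$ on lobes rules out $\phi(\Delta_{k+1})=\Delta_{n-k+1}$. This closes the induction and yields $\phi(\Delta_j)=\Delta_{n-j}$ for all $j\le\lfloor n/2\rfloor$.

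\textbf{Main obstacle.} The crux is the bookkeeping of \emph{which} intersection vertex is hit and \emph{which} of the two neighbours of $\Delta_{n-j}$ along $P$ the image lands on — in other words, translating the purely combinatorial ``not a shift-isomorphism'' hypothesis into the statement $\phi(\Delta_j)=\Delta_{n-j}$ without ambiguity. This rests on three facts that must be invoked carefully: (i) lobe trees are trees, so reduced lobe paths are unique and $\phi$ carries them to reduced lobe paths (Lemma~\ref{lemma:InducedAuthomorphismOnTree}); (ii) determinism of opuntoid graphs forces the intersection vertex shared by two adjacent lobes to be unique from each side, so ``$\phi$ of the entry vertex'' determines ``$\phi$ of the lobe'' up to the at-most-two choices along $P$; and (iii) $P$ is reduced, so $\phi$ injective on lobes kills the wrong choice. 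Once these are laid out, the induction is routine; the only genuine subtlety is ensuring the base case handles the ambiguity correctly, and that the range $0\le j\le\lfloor n/2\rfloor$ is exactly what the ``fold onto itself'' conclusion needs (beyond the midpoint the equalities $\phi(\Delta_j)=\Delta_{n-j}$ are just the mirror images of those already obtained, so stopping at $\lfloor n/2\rfloor$ loses nothing).
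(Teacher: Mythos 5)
Your overall strategy is the one the paper uses: the paper phrases it as an induction on the path length $n$ (the non-shift condition at $\Delta_0$ forces $\phi(\nu_1)\in V(\Delta_{n-1})\cap V(\Delta_n)$, hence $\phi(\Delta_1)=\Delta_{n-1}$, and the lemma is then re-applied to the pair $(\Delta_1,\Delta_{n-1})$ with its shorter reduced path), and your induction on $j$ is the unrolled version of the same argument. Your base case is correct and justified essentially as in the paper.

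There is, however, a concrete gap in your inductive step. You invoke the ``not a shift-isomorphism'' hypothesis for the pair $(\Delta_{k+1},\phi(\Delta_{k+1}))$ and describe ``its reduced lobe path'' as connecting $\Delta_{k+1}$ to $\Delta_{n-k-1}$. At that stage $\phi(\Delta_{k+1})$ is only known to be \emph{some} lobe adjacent to $\Delta_{n-k}$ through $\phi(\nu_{k+1})$; it could a priori be a lobe off $P$ entirely, since $\Delta_{n-k}$ may have neighbours other than $\Delta_{n-k\pm1}$. So describing that path as ending at $\Delta_{n-k-1}$ presupposes the conclusion, and your later remark that injectivity ``kills the wrong choice'' among ``at-most-two choices along $P$'' does not dispose of the off-$P$ alternatives. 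Moreover, for the pair $(\Delta_{k+1},\phi(\Delta_{k+1}))$ the vertex appearing in Definition~\ref{defn:directisomorphic} is one of $V(\Delta_{k+1})\cap V(\Delta_{k+2})$, i.e.\ $\nu_{k+2}$, not the ``entry vertex'' $\nu_{k+1}$ that you use. The repair is to invoke the hypothesis one index earlier, for $\phi|_{\Delta_k}$: by the induction hypothesis $\phi(\Delta_k)=\Delta_{n-k}$, so the reduced lobe path from $\Delta_k$ to its image is the \emph{known} subpath $\Delta_k,\ldots,\Delta_{n-k}$, the vertex $\nu_{k+1}$ plays the role of $\nu_1$ in the definition, and ``not a shift-isomorphism'' then says exactly that $\phi(\nu_{k+1})\in V(\Delta_{n-k-1})\cap V(\Delta_{n-k})$; since an intersection vertex belongs to exactly one lobe of each color, this forces $\phi(\Delta_{k+1})=\Delta_{n-k-1}$. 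This is what the paper's induction-on-$n$ formulation delivers automatically by re-applying the lemma to $(\Delta_1,\Delta_{n-1})$, for which the relevant reduced path is already determined.
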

\begin{proof}
By Proposition \ref{Prop:host set} and Theorem \ref{theo:characterization of more than one host}, each $\Delta_h$,
$0\leq h\leq n$, is a host which is the \sch graph of some idempotent
of $U$ relative to the presentation $\la X_i|R_i\ra$ for some
$i=1,2$. We prove the statement by induction on $n\geq 2$.
The base of induction is trivial. If $\phi|_{\Delta_0}$ is a
shift-isomorphism of $\Delta_0$ onto $\Delta_n$ the statement is
trivially true. So assume that $\phi|_{\Delta_0}$ is not a shift-isomorphism. Let $\nu_1\in V(\Delta_0)\cap V(\Delta_1)$, then
$\phi(\nu_1)\in V(\Delta_{n-1})\cap V(\Delta_n)$. Moreover by
Proposition \ref{prop:hosts which are sch} we get
$f_1=e_i(\nu_1)\in E(U)$. Hence $e_i(\phi(\nu_1))=f_1 \in E(U)$, and so
$(\phi(\nu_1),\Delta_{n-1},\phi(\nu_1))\simeq
\mathcal{A}(X_{3-i},R_{3-i};f_1)\simeq (\nu_1,\Delta_1,\nu_1)$,
$\phi(\Delta_1)=\Delta_{n-1}$. Since the reduced lobe path from
$\Delta_1$ to $\Delta_{n-1}$ has length $n-1$ the statement holds
by induction hypothesis.
\end{proof}

\begin{prop}\label{prop:characterization of infinite host set}
Let $e\in E(S_1*_U S_2)$ with $S_1,S_2$ finite inverse semigroups
and let $e\mathcal{D}^{S_1*_US_2} f$ for some idempotent $f\in
E(U)$. Then the following are equivalent
\begin{enumerate}
    \item $Host({S\Gamma}(e))$ is infinite;
    \item $Host({S\Gamma}(e))$ has infinitely many lobes;
    \item There are two isomorphic hosts of $S\Gamma(e)$ which
are not successive isomorphic lobes;
    \item There is a shift-isomorphism between two hosts of
    $S\Gamma(e)$.
\end{enumerate}
\end{prop}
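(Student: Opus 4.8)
The plan is to establish the cycle of implications $2)\Rightarrow 1)\Rightarrow 4)\Rightarrow 3)\Rightarrow 2)$, using finiteness of $S_1,S_2$ at the two key junctures. The implication $2)\Rightarrow 1)$ is immediate: a lobe has at least one edge, so infinitely many lobes force infinitely many edges. For $1)\Rightarrow 4)$, I would argue by contrapositive: assume there is \emph{no} shift-isomorphism between any two hosts of $S\Gamma(e)$, and show $Host({S\Gamma}(e))$ is finite. Since $S_1,S_2$ are finite, there are only finitely many isomorphism types of lobes overall; and since by Theorem \ref{theo:characterization of more than one host} every host is the \sch graph of an idempotent of $U$, each host is moreover finite. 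So if $Host({S\Gamma}(e))$ had infinitely many lobes, some isomorphism type would be attained by infinitely many hosts, in particular by two hosts $\Delta,\Delta'$ at unbounded lobe-path distance. Using Proposition \ref{Prop:ExtensionToIsomorphism} (extend the abstract lobe isomorphism to an automorphism of the whole host union — note $Host(S\Gamma(e))$ itself plays the role of $\Theta$ here, being parasite-free and containing a host) we get $\varphi\in Aut(Host({S\Gamma}(e)))$ carrying $\Delta$ to $\Delta'$. By Lemma \ref{lem:notshift-isomorphism}, since no restriction $\varphi|_{\Delta_j}$ is a shift-isomorphism, we must have $\varphi(\Delta_j)=\Delta_{n-j}$ for all $j\le\lfloor n/2\rfloor$; in particular $\varphi$ reverses the reduced lobe path, so its square fixes the path pointwise and $\varphi$ itself is (up to this flip) an involution of a finite structure. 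The point to extract is that the absence of shift-isomorphisms forces every automorphism to act on the lobe tree with bounded orbits, and combined with finitely many lobe types this bounds the diameter of $Host({S\Gamma}(e))$, hence its size.

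For $4)\Rightarrow 3)$: suppose $\phi$ is a shift-isomorphism between hosts $\Delta_0$ and $\Delta_n$, with reduced lobe path $\Delta_0,\dots,\Delta_n$ and $\phi(\nu_1)\notin V(\Delta_{n-1})\cap V(\Delta_n)$ for $\nu_1\in V(\Delta_0)\cap V(\Delta_1)$. I claim $\Delta_0$ and $\Delta_n$ are not successive isomorphic lobes. Indeed, extend $\phi$ to $\varphi\in Aut(Host({S\Gamma}(e)))$ by Proposition \ref{Prop:ExtensionToIsomorphism}. Since $\varphi(\nu_1)$ lies in $\Delta_n$ but not on the edge joining $\Delta_{n-1}$ to $\Delta_n$, the lobe $\varphi(\Delta_1) = \Delta_2(\varphi(\nu_1))$ (the other lobe through $\varphi(\nu_1)$) is a host adjacent to $\Delta_n$ that is \emph{not} $\Delta_{n-1}$; it is isomorphic to $\Delta_1$, which is isomorphic to $\Delta_{n-1}$ by the same token applied one step in (all these being \sch graphs of $U$-idempotents of the same color and same minimum loop idempotent). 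Iterating $\varphi$ produces an infinite forward ray of hosts $\Delta_n, \varphi(\Delta_n), \varphi^2(\Delta_n),\dots$ all isomorphic; two of these at distance $>$ the number of lobe types that occur between consecutive terms will be isomorphic but with an isomorphic lobe strictly between them, witnessing $3)$. (Alternatively, and more cleanly: if $3)$ failed, every pair of isomorphic hosts would be successive, which as in the previous paragraph bounds the diameter, contradicting the infinite ray generated by $\varphi$.)

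For $3)\Rightarrow 2)$: let $\Delta,\Delta'$ be isomorphic hosts that are not successive isomorphic lobes, so the reduced lobe path between them contains an intermediate lobe $\Delta_k$ isomorphic to $\Delta$. Extend the isomorphism to $\varphi\in Aut(Host({S\Gamma}(e)))$ via Proposition \ref{Prop:ExtensionToIsomorphism}. Then $\varphi$ moves $\Delta$ to $\Delta'$ along a path that properly contains an isomorphic copy of the source lobe; hence $\varphi$ has infinite order (it cannot stabilize the finite lobe tree of a hypothetical finite $Host({S\Gamma}(e))$ while strictly displacing $\Delta$ past an isomorphic intermediate lobe — a finite-tree automorphism fixes a vertex or edge, contradicting such displacement). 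The orbit $\{\varphi^m(\Delta)\}_{m\in\mathbb Z}$ is then an infinite set of distinct lobes, all inside $Host({S\Gamma}(e))$, giving $2)$.

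\textbf{Main obstacle.} The delicate point is the bookkeeping in $1)\Rightarrow 4)$ (equivalently, the contrapositive direction showing ``no shift-isomorphism $\Rightarrow$ finite host union''): one must convert the local statement of Lemma \ref{lem:notshift-isomorphism} — that each non-shift isomorphism between hosts is forced to reverse the connecting lobe path — into a global bound on the number of lobes. The argument needs the finiteness of $S_1,S_2$ (finitely many lobe isomorphism types, and each $U$-idempotent \sch graph finite) to guarantee that ``many lobes'' produces ``a repeated type at large distance,'' and then Proposition \ref{Prop:ExtensionToIsomorphism} to upgrade the repeated type to a genuine automorphism; but one must check carefully that a path-reversing automorphism cannot coexist with arbitrarily long reduced lobe paths of hosts, i.e.\ that the reversal forces a unique fixed center and hence bounded radius. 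The other implications are comparatively routine once one freely invokes Proposition \ref{Prop:ExtensionToIsomorphism} and the rigidity of automorphisms of finite trees noted after Lemma \ref{lemma:InducedAuthomorphismOnTree}.
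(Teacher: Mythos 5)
Your cycle $2)\Rightarrow 1)\Rightarrow 4)\Rightarrow 3)\Rightarrow 2)$ differs from the paper's route ($1)\Leftrightarrow 2)$, then $1)\Rightarrow 3)\Rightarrow 4)\Rightarrow 2)$), and the step you yourself flag as delicate, $1)\Rightarrow 4)$, is genuinely not closed. From infinitely many lobes you extract two isomorphic hosts far apart and an automorphism $\varphi$ carrying one to the other; assuming no shift-isomorphisms, Lemma \ref{lem:notshift-isomorphism} makes $\varphi$ reverse the connecting reduced lobe path. But a path-reversing automorphism is entirely compatible with arbitrarily long paths (it simply fixes the middle lobe or middle edge), so ``bounded orbits'' does not follow and nothing bounds the diameter of $Host({S\Gamma}(e))$. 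The missing ingredient is a \emph{third} isomorphic lobe on the same path: since the lobe tree is locally finite (finite lobes, finitely many isomorphism types), an infinite host union contains an infinite reduced lobe path, and pigeonhole puts three pairwise isomorphic hosts $\Delta_0\simeq\Delta_h\simeq\Delta_n$ on it. If the extended automorphism $\overline{\phi}$ is nowhere a shift-isomorphism it reverses the path, which forces $h=n/2$ and $\overline{\phi}(\Delta_{n/2})=\Delta_{n/2}$; one then composes an isomorphism $\psi:\Delta_0\rightarrow\Delta_{n/2}$ with $\overline{\phi}$ and checks, using that $\psi$ bijects the relevant intersection-vertex sets, that $\overline{\phi}\circ\psi$ is a shift-isomorphism. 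Without this composition trick there is no way to manufacture the shift-isomorphism from mere repetition of a lobe type, and your contrapositive stalls.

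The same flaw resurfaces in your $3)\Rightarrow 2)$: you claim an automorphism displacing $\Delta$ past an isomorphic intermediate lobe must have infinite order because a finite-tree automorphism fixes a vertex or an edge, but the flip about the intermediate lobe achieves exactly that displacement while fixing a vertex of the lobe tree, so the conclusion fails as argued. Likewise your $4)\Rightarrow 3)$ rests on the assertion that iterating the extension of a shift-isomorphism produces an infinite ray of pairwise distinct hosts; this is true but is precisely the nontrivial content of the paper's $4)\Rightarrow 2)$, proved by taking a shift-isomorphic pair whose connecting reduced lobe path has maximal length and showing by induction, via Lemma \ref{lemma:ExtensionProperty}, that $\overline{\phi}$ carries that path onto a disjoint continuation, so that $\overline{\phi}^2$ is a shift-isomorphism over a strictly longer path, a contradiction. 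As written, the proposal securely establishes only the easy equivalence $1)\Leftrightarrow 2)$; the remaining implications all lean on the unproved ``translation'' and ``bounded diameter'' claims.
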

\begin{proof}
The equivalence between 1) and 2) is trivial.\\ $1)\Rightarrow
3)$. By Theorem \ref{theo:characterization of more than one host} each lobe of $Host({S\Gamma}(e))$ is a \sch graph of some
idempotent of $U$ relative to the presentation $\langle
X_i|R_i\rangle$ for some $i\in \{1,2\}$. Since $S_1,S_2$ are
finite, there are finitely many \sch graphs of idempotents of $U$
relative to the presentations $\langle X_i|R_i\rangle$ with $i\in
\{1,2\}$. Since all the lobes are finite, each lobe has finitely many adjacent lobes and so the degree of each vertex of the lobe tree
$T(Host({S\Gamma}(e)))$ is finite. Therefore there is an infinite reduced lobe path in $T(Host({S\Gamma}(e)))$ in which there are at least three isomorphic lobes, whence there are two isomorphic hosts which are not successive.\\
$3)\Rightarrow 4)$. Suppose that $S\Gamma(e)$ has two isomorphic
hosts $\Delta$ and $\Delta'$ which are not successive isomorphic
lobes. Thus, in the reduced lobe path
$\Delta=\Delta_0,\Delta_1,\ldots,\Delta_n=\Delta'$ connecting them, there is a lobe
$\Delta_h$ with $1\leq h\leq n-1$  isomorphic to both $\Delta$ and
$\Delta'$. We can assume without loss of generality that no $\Delta_j$
with $1\leq j\leq n-1, j\neq h$ is isomorphic to $\Delta$. Since
isomorphic lobes have the same color then $n\geq 4$ is even and let $t=n/2$. Let $\phi$ be the isomorphism sending $\Delta$ onto $\Delta'$. By Propositions
\ref{Prop:ExtensionToIsomorphism} and \ref{prop:IsomorphismOpuntoidHost} $\phi$ can be extended to an
automorphism $\overline{\phi}\in Aut(Host({S\Gamma}(e)))$.
Assume that for each lobe $\Lambda$ of $Host({S\Gamma}(e))$
$\overline{\phi}|_{\Lambda}$ is not a shift-isomorphism of
$\Lambda$ onto some host $\Lambda'$. Then by Lemma
\ref{lem:notshift-isomorphism} for all $j$ with $0\leq j<t$,
$\overline{\phi}(\Delta_j)=\Delta_{2t-j}$. Then $t=h$, otherwise
both $\Delta_h$ and $\Delta_{n-h}$ would be isomorphic to $\Delta$, hence $\overline{\phi}|_{\Delta_t}\in Aut(\Delta_t)$. Let $\nu_t\in
V(\Delta_{t-1})\cap V(\Delta_t)$. Then $\overline{\phi}(\nu_t)\in
V(\Delta_t)\cap V(\Delta_{t+1})$. Now let
$\psi:\Delta\rightarrow\Delta_t$ be an isomorphism. If $\psi$ is a
shift-isomorphism then we are done, otherwise if $\nu_1\in V(\Delta_0)\cap V(\Delta_{1})$, then $\psi(\nu_1)\in
V(\Delta_t)\cap V(\Delta_{t-1})$. Using the fact that $\psi$ preserves labeling, it is easy to see that $\psi$ is actually a bijection between the two sets $V(\Delta_0)\cap V(\Delta_{1})$ and $V(\Delta_t)\cap V(\Delta_{t-1})$. Thus let $\nu'$ be the vertex of $V(\Delta_0)\cap V(\Delta_{1})$ such that $\psi(\nu')=\nu_{t}\in
V(\Delta_{t-1})\cap V(\Delta_t)$. Hence $\overline{\phi}(\psi(\nu'))=\overline{\phi}(\nu_{t})\in V(\Delta_t)\cap V(\Delta_{t+1})$, i.e. $\overline{\phi}(\psi(\nu'))\notin V(\Delta_{t-1})\cap V(\Delta_{t})$. Hence
the map $\psi\cdot\overline{\phi}:\Delta\rightarrow \Delta_t$
(defined by
$\psi\cdot\overline{\phi}(\nu)=\overline{\phi}(\psi(\nu)))$ is a
shift-isomorphism from $\Delta$ to $\Delta_t$.\\
$4)\Rightarrow 2)$. Assume by contradiction that
$Host({S\Gamma}(e))$ has two shift-isomorphic lobes, and
finitely many lobes. Let $\Delta, \Delta'$ be two shift-isomorphic lobes by $\phi$ in $Host({S\Gamma}(e))$ such that
the reduced lobe path
$\Delta=\Delta_0,\Delta_1,\ldots,\Delta_n=\Delta'$ from $\Delta$
to $\Delta'$ is of maximal length. By Proposition \ref{Prop:host
set} and Theorem \ref{theo:characterization of
more than one host} each $\Delta_j$, $0\leq j\leq n$ is a host and the \sch graph
of some idempotent of $U$. Moreover by Proposition \ref{prop:hosts which are sch}, $(\nu_{j},\Delta_{j-1},\nu_{j})\simeq
\mathcal{A}(X_i,R_i;f_j)$, $(\nu_{j},\Delta_{j},\nu_{j})\simeq
\mathcal{A}(X_{3-i},R_{3-i},f_j)$ where $\nu_j\in
V(\Delta_{j-1})\cap V(\Delta_j)$, $f_j=e_i(\nu_{j})=e_{3-i}(\nu_j)
\in U$ and $i\in \{1,2\}$ is the color of $\Delta_{j-1}$. By Propositions \ref{Prop:ExtensionToIsomorphism} and \ref{prop:IsomorphismOpuntoidHost} the isomorphism $\phi:\Delta\rightarrow\Delta'$ can be extended to an automorphism $\overline{\phi}\in Aut(Host({S\Gamma}(e)))$. We prove by induction on $h$ that, for all $h$ with $0\leq h\leq n$, $\overline{\phi}$ maps the subopuntoid subgraph
$\Theta_h=\bigcup_{0\leq j\leq h}\Delta_j$ of
$Host({S\Gamma}(e))$ onto a subopuntoid subgraph of
$Host({S\Gamma}(e))$ whose lobes, except eventually $\Delta_n$,
are all different from the lobes of $\Theta_h$. Moreover
$\overline{\phi}$ is a shift-isomorphism between $\Delta_h$ and
$\overline{\phi}(\Delta_h)$. The base of induction is trivial. So
let $\Theta_{h-1}=\bigcup_{0\leq j\leq h-1}\Delta_j$ and put
$\Delta_{n+j}=\overline{\phi}(\Delta_{j})$ for all $0\leq j\leq
h-1$. By Lemma \ref{lemma:InducedAuthomorphismOnTree}
$\overline{\phi}(\Theta_{h-1})=\bigcup_{0\leq j\leq
h-1}\Delta_{n+j}$  and for all $j$ with $0\leq j\leq h-2$,
$\Delta_{n+j}$ is adjacent to $\Delta_{n+j+1}$. Moreover by
induction hypothesis $\Theta_{h-1}$ and
$\overline{\phi}(\Theta_{h-1})$ have disjoint sets of lobes and
$\overline{\phi}$ is a shift-isomorphism of $\Delta_{h-1}$ onto
$\Delta_{n+h-1}$. Let $\nu_h$ be an intersection vertex between
$\Delta_{h-1}$ and $\Delta_h$ and let $i\in\{1,2\}$ be the color of
$\Delta_{h-1}$. By Lemma \ref{lemma:ExtensionProperty}
$\overline{\phi}$ maps $\Theta_{h}=\Theta_{h-1}\cup \Delta_h$,
onto $\overline{\phi}(\Theta_{h-1})\cup S\Gamma(X_{3-i},R_{3-i};
f(e_i(\overline{\phi}(\nu_h))))$. So $S\Gamma(X_{3-i},R_{3-i};
f(e_i(\overline{\phi}(\nu_h))))=\Delta_{n+h}$ does not coincide
with any lobe of $\Theta_h$. Moreover  if $\nu_{h+1}$ is an
intersection vertex between $\Delta_h$ and $\Delta_{h+1}$ then
$\overline{\phi}(\nu_{h+1})\notin V(\Delta_{n+h-1})\cap
V(\Delta_{n+h})$ so $\overline{\phi}$ is a shift isomorphism between $\Delta_h$ and $\Delta_{n+h}$. In particular for $h=n$, $\overline{\phi}$ is a
shift-isomorphism of $\Delta_n$ onto $\Delta_{2n}$ and
$\overline{\phi}^2$ is a shift-isomorphism of $\Delta_0$ onto
$\Delta_{2n}$, against the assumption that the reduced lobe path
connecting $\Delta=\Delta_0$ to $\Delta'=\Delta_n$ is a path of
maximal length among the reduced lobe paths connecting two hosts
which are isomorphic under a shift-isomorphism. Then
$Host({S\Gamma}(e))$ has infinitely many lobes.
\end{proof}

From the above proposition we derive the following corollary.
\begin{cor}\label{cor:characterization of infinite host set}
Let $e\in E(S_1*_U S_2)$ with $S_1,S_2$ finite inverse semigroups
and let $e\mathcal{D}^{S_1*_US_2} f$ for some idempotent $f\in
E(U)$. Then $Host({S\Gamma}(e))$ is infinite if and only if in
$Host({S\Gamma}(e))$ there is a reduced lobe path
$P:\Delta_{0},\ldots,\Delta_{t-1},\Delta_{t},\Delta_{t+1},\ldots\Delta_{2t}$
with $\Delta_0\simeq\Delta_{t}\simeq\Delta_{2t}$.
\end{cor}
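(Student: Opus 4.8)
The plan is to obtain the corollary as a direct unpacking of Proposition \ref{prop:characterization of infinite host set} together with the shift-isomorphism bookkeeping already carried out in Lemma \ref{lem:notshift-isomorphism} and in the proof of implication $4)\Rightarrow 2)$. For the forward direction, I assume $Host(S\Gamma(e))$ is infinite. By Proposition \ref{prop:characterization of infinite host set} (equivalence of 1 and 4) there is a shift-isomorphism $\phi$ between two hosts $\Delta$ and $\Delta'$; let $\Delta=\Delta_0,\Delta_1,\ldots,\Delta_n=\Delta'$ be the reduced lobe path connecting them, which we may as in the proof of $4)\Rightarrow 2)$ take of maximal length among reduced lobe paths joining two hosts that are shift-isomorphic. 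Extend $\phi$ to $\overline{\phi}\in Aut(Host(S\Gamma(e)))$ via Propositions \ref{Prop:ExtensionToIsomorphism} and \ref{prop:IsomorphismOpuntoidHost}. The induction inside the proof of $4)\Rightarrow 2)$ produces a reduced lobe path $\Delta_0,\ldots,\Delta_n,\ldots,\Delta_{2n}$ with $\Delta_{n+j}=\overline{\phi}(\Delta_j)$ for $0\le j\le n$, so that $\overline{\phi}(\Delta_0)=\Delta_n$ and $\overline{\phi}(\Delta_n)=\Delta_{2n}$; hence $\Delta_0\simeq\Delta_n\simeq\Delta_{2n}$. Setting $t=n$ gives exactly the required reduced lobe path $P:\Delta_0,\ldots,\Delta_{t-1},\Delta_t,\Delta_{t+1},\ldots,\Delta_{2t}$ with $\Delta_0\simeq\Delta_t\simeq\Delta_{2t}$.

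For the converse, suppose such a reduced lobe path $P:\Delta_0,\ldots,\Delta_{2t}$ with $\Delta_0\simeq\Delta_t\simeq\Delta_{2t}$ exists inside $Host(S\Gamma(e))$. First note that since $e\mathcal{D}^{S_1*_US_2}f$ for an idempotent $f\in E(U)$, Theorem \ref{theo:characterization of more than one host} applies, so $S\Gamma(e)$ has more than one host and every lobe on $P$ is a host which is a \sch graph of some idempotent of $U$; in particular each $\Delta_j$ is finite. Now among the three pairwise-isomorphic lobes $\Delta_0,\Delta_t,\Delta_{2t}$ on a reduced lobe path, at least one intermediate lobe is isomorphic to an endpoint, so $\Delta_0$ and $\Delta_{2t}$ are two isomorphic hosts that are not successive isomorphic lobes in the sense of Definition \ref{defn:directisomorphic}. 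Thus condition 3) of Proposition \ref{prop:characterization of infinite host set} holds, and by that proposition $Host(S\Gamma(e))$ is infinite. (Alternatively, one could invoke $3)\Rightarrow 4)\Rightarrow 2)$ directly, which is what the proposition provides.)

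The only genuinely delicate point is making sure that the path $P$ furnished in the converse really triggers condition 3) rather than merely condition "two isomorphic hosts exist": one must check that $\Delta_0$ and $\Delta_{2t}$ fail to be successive isomorphic lobes, and this is where the presence of $\Delta_t$ strictly between them, isomorphic to $\Delta_0$, is used. Everything else is a transcription of the constructions in the preceding proofs, so I expect the write-up to be short; the main obstacle — such as it is — is simply citing the right implications of Proposition \ref{prop:characterization of infinite host set} and of Lemma \ref{lem:notshift-isomorphism} in the correct order, and making explicit that $t=n$ in the forward direction so that the indices match the statement.
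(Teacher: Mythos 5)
Your proof is correct and takes essentially the same route as the paper's: the forward direction extracts a shift-isomorphism from Proposition \ref{prop:characterization of infinite host set} and concatenates $P$ with $\overline{\phi}(P)$ via the induction in the proof of $4)\Rightarrow 2)$, while the converse notes that $\Delta_t$, isomorphic to $\Delta_0$ and lying strictly between $\Delta_0$ and $\Delta_{2t}$, makes these two hosts non-successive isomorphic lobes, so condition 3) applies. The only quibble is your aside that the path may be taken of maximal length: such a maximum need not exist when $Host({S\Gamma}(e))$ is infinite, but since nothing in your argument actually uses it, the proof stands once that clause is dropped.
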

\begin{proof}
  By Proposition \ref{prop:characterization of infinite host set} if $Host({S\Gamma}(e))$ is infinite
  then there is a shift-isomorphism between two
  lobes of $Host({S\Gamma}(e))$. Let $\Delta$ and $\Delta'$ be such lobes
  and let $\phi:\Delta\rightarrow\Delta'$
  be a shift-isomorphism. If $P$ is the reduced lobe path from
  $\Delta$ to $\Delta'$, by the same argument of the proof of implication $4)\Rightarrow 2)$ of Proposition \ref{prop:characterization of infinite host set}, $P\cup\phi(P)$ is a reduced lobe path in $Host({S\Gamma}(e))$ satisfying the statement. Conversely, let $P:\Delta_{0},\ldots,\Delta_{t-1},\Delta_{t},\Delta_{t+1},\ldots\Delta_{2t}$
  with $\Delta_0\simeq\Delta_{t}\simeq\Delta_{2t}$ be a reduced lobe path of
  $Host({S\Gamma}(e))$, then in $P$ there are two non successive isomorphic lobes and
  so $Host({S\Gamma}(e))$ is infinite by Proposition \ref{prop:characterization of infinite host
  set}.
\end{proof}

\section{Review of the Bass-Serre theory}\label{Sect.BS}

For the sake of completeness, we shortly review
the Bass-Serre theory of groups acting on
graphs. We refer the reader interested in more details to
\cite{GroupActingOnGraphs,Serre}.\\
Let $G$ be a group, and let $X=(Vert(X),Edge(X))$ be a graph with
initial and terminal vertex maps $\alpha, \omega:
Edge(x)\rightarrow Vert(X)$. If the action $\cdot$ of $G$ on the
set $Vert(X)\cup Edge(X)$ satisfies for all $g\in G, y\in Edge(X)$
the conditions: $\alpha(g\cdot y)=g\cdot\alpha(y)$, $\omega(g\cdot
y)=g\cdot\omega(y)$ and $g\cdot \overline{y}=\overline{g\cdot y} $
(where $\overline{y}$ denotes the opposite edge of $y$), we say
that $G$ \emph{acts on the graph} $X$. The action of $G$ on the
graph $X$ is \emph{without inversions} if $g\cdot y\neq
\overline{y}$ for all $y\in Edge(X), g\in G$. If $G$ acts on a
graph $X$ without inversions, the \emph{quotient graph} of the
action of $G$ on $X$ is the graph $G\backslash X$ whose vertex and
edge sets are respectively the set $\{G\cdot v|v\in Vert(X)\}$ of
the orbits of $G$ of the vertices of $X$ and the set $\{G\cdot
y|y\in Edge(X)\}$ of the orbits of $G$ of the edges of $X$, with
the incidence relation defined by $\alpha(G\cdot
y)=G\cdot\alpha(y),\ \omega(G\cdot y)=G\cdot\omega(y)$ and
$\overline{G\cdot y}=G\cdot \overline{y}$ for all $y\in Edge(X)$.
The map $v\rightarrow G\cdot v$, $y\rightarrow G\cdot y$, for all
$v\in Vert(X),\ y\in Edge(X)$ is a map of the graph $X$ onto
$G\backslash X$ and when $X$ is a connected graph the subtrees of
$G\backslash X$ lift to subtrees of
$X$ \cite{Serre}.\\
Let $X$ be a connected non-empty graph and let $\mathcal{G}$ be a
mapping assigning to each $v\in Vert(X)$ and to each $y\in
Edge(X)$ a group $G_v$ and $G_y$ so that $G_y=G_{\overline{y}}$.
Assume that for each $y\in Edge(X)$ there are two group
monomorphisms $\sigma_y:G_y\rightarrow G_{\alpha(y)}$ , $\tau_y:
G_y\rightarrow G_{\omega(y)}$ such that
$\sigma_y=\tau_{\overline{y}}$. Then $X$ with the group assignment
$\mathcal{G}$ is called a \emph{graph of groups}
$(\mathcal{G}(-),X)$. Let $T$ be any maximal subtree of $X$, the
\emph{fundamental group} $\pi(\mathcal{G}(-),X,T)$ of
$(\mathcal{G}(-),X)$ with respect to $T$ is generated by the
disjoint union of vertex groups $G_v,\ v\in Vert(X)$ and by the
edges $Edge(X)$, subject to the relations $\{\overline{y}=y^{-1},
y^{-1}\sigma_y(a)y=\tau_y(a)|y\in Edge(X), a\in
G_y\}\cup\{y=1|y\in T\}$. The fundamental group of a graph of
groups is, up to isomorphisms, independent on the choice of $T$
and so it will be denoted by $\pi(\mathcal{G}(-),X)$. From the
structure theorem of Bass-Serre theory we know that
\begin{prop}\label{Prop:embedding of vertex groups}
Let $(\mathcal{G}(-),X)$ be a graph of groups. Then each vertex
group $G_v$ and edge group $G_y$ is embedded in the fundamental
group $\pi(\mathcal{G}(-),X)$.
\end{prop}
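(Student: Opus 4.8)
The statement to prove is Proposition~\ref{Prop:embedding of vertex groups}, the classical Bass--Serre embedding result: each vertex group $G_v$ and each edge group $G_y$ embeds into the fundamental group $\pi(\mathcal{G}(-),X)$.

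The plan is to reduce everything to the two standard building blocks, the amalgamated free product (one edge, distinct endpoints) and the HNN-extension (one loop edge), and then bootstrap to an arbitrary graph of groups via a maximal subtree and a direct limit argument.

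\medskip

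\textbf{Step 1: the two atomic cases.} First I would recall that if $X$ consists of a single geometric edge $y$ with $\alpha(y)=v_1\neq v_2=\omega(y)$, then $\pi(\mathcal{G}(-),X,\{y\})$ is by definition the amalgamated free product $G_{v_1}*_{G_y}G_{v_2}$ (identifying $G_y$ with its images under $\sigma_y,\tau_y$), and the classical normal form theorem for amalgamated products shows that both $G_{v_1}$ and $G_{v_2}$ — hence $G_y$ — embed. Similarly, if $X$ is a single loop at $v$, then $\pi(\mathcal{G}(-),X,T)$ (with $T$ the single vertex) is the HNN-extension $\langle G_v, y \mid y^{-1}\sigma_y(a)y=\tau_y(a)\rangle$, and Britton's Lemma gives the embedding of $G_v$ and of $G_y$. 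These two facts I would simply cite from \cite{Serre,GroupActingOnGraphs}.

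\medskip

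\textbf{Step 2: collapsing the maximal subtree.} Fix a maximal subtree $T\subseteq X$. I would first treat the case $X=T$, i.e.\ $X$ is itself a tree, by induction on the number of edges: pick a leaf edge $y$ with endpoint $v$ a leaf; the fundamental group of the whole tree is an iterated amalgamated free product, and more precisely $\pi(\mathcal{G}(-),X)\cong \pi(\mathcal{G}(-),X')*_{G_y}G_v$ where $X'$ is $X$ with the leaf $v$ and edge $y$ removed. By Step 1 applied to this amalgam, $G_v$ and $G_y$ embed into $\pi(\mathcal{G}(-),X)$, and by the inductive hypothesis the vertex and edge groups of $X'$ embed into $\pi(\mathcal{G}(-),X')$, which in turn embeds into the amalgam; composing gives the embedding for all vertex and edge groups of $X$.

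\medskip

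\textbf{Step 3: adding the non-tree edges, and the direct limit.} For general $X$, write $\pi_T=\pi(\mathcal{G}(-),T)$ for the fundamental group of the subtree, which by Step 2 contains all vertex groups and all edge groups of $T$. Each edge $y\in Edge(X)\setminus Edge(T)$ contributes, in the presentation of $\pi(\mathcal{G}(-),X,T)$, a stable letter $y$ conjugating $\sigma_y(G_y)$ to $\tau_y(G_y)$; since both $\sigma_y(G_y)\le G_{\alpha(y)}$ and $\tau_y(G_y)\le G_{\omega(y)}$ already sit inside $\pi_T$ as isomorphic subgroups, adjoining $y$ produces an HNN-extension of the group obtained so far. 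Adding the finitely-or-infinitely-many non-tree edges one at a time, each step is an HNN-extension, hence injective on the previous group by Step 1; $\pi(\mathcal{G}(-),X)$ is the direct limit of this chain, and a direct limit of a chain of injections is injective into the limit. Therefore $G_v\hookrightarrow\pi_T\hookrightarrow\pi(\mathcal{G}(-),X)$ for every $v\in Vert(X)$, and likewise for each $G_y$ (for $y\in T$ this is immediate; for $y\notin T$ the group $G_y$ embeds as the associated subgroup of the corresponding HNN-extension).

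\medskip

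\textbf{Main obstacle.} The only genuinely nontrivial inputs are the normal form / Britton's Lemma statements for amalgamated products and HNN-extensions underlying Step~1; everything else is bookkeeping with direct limits and leaf-induction on trees. Since the excerpt is a \emph{review} section, I would present Step~1 as a citation rather than reprove it, and keep the write-up to the skeleton of Steps 2 and 3. Care must only be taken that the order in which edges are adjoined is irrelevant up to isomorphism (standard) and that when $Edge(X)\setminus Edge(T)$ is infinite the direct limit is taken over a well-ordered chain — a routine transfinite induction that causes no difficulty because injectivity is preserved at limit ordinals.
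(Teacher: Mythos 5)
The paper does not prove this proposition at all: Section~\ref{Sect.BS} is explicitly a review of Bass--Serre theory, and Proposition~\ref{Prop:embedding of vertex groups} is quoted as a known structure theorem with a pointer to \cite{GroupActingOnGraphs,Serre}. So there is no ``paper proof'' to compare against; what you have written is the standard textbook argument from those references, and it is essentially correct: reduce to the two atomic cases (amalgamated product via the normal form theorem, HNN-extension via Britton's lemma), handle the maximal subtree by peeling leaves, then adjoin the remaining edges one at a time as HNN-extensions and pass to the direct limit. The one point worth tightening is Step~2: your induction ``on the number of edges'' presupposes that the maximal subtree $T$ is finite, whereas the proposition is stated for an arbitrary connected graph of groups. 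The fix is exactly the direct-limit device you already invoke in Step~3, applied one level earlier: $\pi(\mathcal{G}(-),T)$ is the direct limit of the fundamental groups of the finite connected subtrees of $T$ containing a fixed base vertex, and each inclusion in that directed system is injective by your finite-tree induction, so every vertex and edge group of $T$ still embeds. With that sentence added the argument is complete; in the setting where the paper actually uses the proposition (the quotient graph $Y$ is finite in Case~1) the issue does not even arise.
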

Two graphs of groups $(\mathcal{G}(-),X)$, $(\mathcal{H}(-),Y)$
are said to be \emph{isomorphic} if there is a graph isomorphism $\Phi:X\rightarrow
Y$ together with a collection of group isomorphisms $\Phi_v:G_v\rightarrow
H_{\Phi(v)}$, $\Phi_y:G_y\rightarrow H_{\Phi(y)}$ satisfying the
conditions $\Phi_y=\Phi_{\overline{y}}$,
$\Phi_{\alpha(y)}\sigma_y=\sigma_{\Phi(y)}\Phi(y)$ and
$\Phi_{\omega(y)}\tau_y=\tau_{\Phi(y)}\Phi(y)$. It is easy to see
that isomorphic graphs of groups have isomorphic fundamental
groups.\\
A graph of groups $(\mathcal{H}(-),X)$ is \emph{conjugate} to
$(\mathcal{G}(-),X)$ if it has the same group assignments of
$(\mathcal{G}(-),X)$, the same embedding $\sigma_y$ and whose
embedding $\tau_y$ are the ones of $(\mathcal{G}(-),X)$ followed
by a conjugation by an element of $G_{\omega(y)}$. A graph of
groups $(\mathcal{H}(-),Y)$ is \emph{conjugate isomorphic} to the
graph of groups $(\mathcal{G}(-),X)$ if it is isomorphic to a
conjugate of $(\mathcal{G}(-),X)$. Two conjugate isomorphic graphs
of groups have the same fundamental groups.\\
Now we outline how to construct a graph of groups starting from
the action of a group $G$ on a connected non-empty graph $X$. Let
$Y=G\backslash X$ and let $A$ be an orientation of the edges of
$Y$, i.e a subset of edges of $Y$ containing exactly one edge for
each pair of opposite edges in $Y$. Let $T$ be a maximal subtree of
$Y$ and $T'$ its lifting to $X$, for each $v\in Vert(T)=Vert(Y),
y\in Edge(T)$, we denote by $j(v),j(y)$ the lift of $v$ and $y$ in
$T'$. For each $v\in Vert(T)$ and  $y\in Edge(T)$ we put
$G_v=Stab_G(j(v))$ and $G_y=Stab_G(j(y))$.  Since
$Stab_G(j(y))\subseteq Stab_G(j(\alpha(y)))$ and
$Stab_G(j(y))\subseteq Stab_G(j(\omega(y)))$, then for each $y\in
Edge(T)$ the monomorphisms $\sigma_y, \tau_y$ are the
inclusions. Now let $y\in (Edge(Y)-Edge(T))\cap A$ and let
$x=j(y)\in Edge(X)$ an edge mapping in $y$ such that
$\alpha(x)=j(\alpha (x))$. Again we put $G_y=Stab_G(j(y))$ and
$\sigma_y$ is the inclusion of $G_y$ in $G_{\alpha(x)}$. Moreover
since $\omega(x)$ and $j(\omega(y))$ belong to the same vertex
orbit of $G$ there exists $g_y\in G$ mapping $\omega(x)$ to
$j(\omega(y))$ and the two groups $Stab_G(\omega(x))$,
$Stab_G(j(\omega(y)))$ are conjugate in $G$ by $g_y$. So we set
$\tau_y=g_y\cdot\iota\cdot g_y^{-1}$ where $\iota$ is the
inclusion of $Stab_G(j(y))$ in $Stab(\omega(x))$. Then we put for
each edge $y\in Edge(Y)-A$  $G_y=G_{\overline{y}}
=Stab_G(\overline{y})$, $\sigma_y=\tau_{\overline{y}}$,
$\tau_y=\sigma_{\overline{y}}$, completely constructing a graph of
groups $(\mathcal{G}(-),Y)$. Its fundamental group
$\pi(\mathcal{G}(-),Y)$ is homomorphic on G by the group
homomorphism $\Phi$ defined by the inclusions $G_v\rightarrow G$
and the mapping $\Phi(y)=g_y$ where $g_y$ is the element of $G$
mapping $\omega(x)$ on $j(\omega(y))$. The structure theorem of
Bass-Serre theory says
\begin{theorem}\label{structure theorem}
Let $G$ be a group acting without inversions on a connected graph
$X$. Then $X$ is  tree if and only if
$\Phi:\pi(\mathcal{G}(-),G\backslash X)\rightarrow G$ is an
isomorphism.
\end{theorem}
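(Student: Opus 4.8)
This is the classical Bass--Serre structure theorem; the plan is to follow the route of \cite{Serre}. Throughout, write $\pi=\pi(\mathcal{G}(-),Y)$ with $Y=G\backslash X$, keep the notation of the construction preceding the statement ($A$ an orientation, $T$ a maximal subtree of $Y$, $T'$ its lift, $j$ the lift map, and the translating elements $g_y$), and recall that $\Phi$ restricts to the inclusion $G_v\hookrightarrow G$ on each vertex group and sends each $y\in\mathrm{Edge}(Y)$ to $g_y$ (with $g_y=1$ for $y\in\mathrm{Edge}(T)$). I would establish two facts whose conjunction is exactly the statement: \textbf{(i)} $\Phi$ is surjective (using only that $X$ is connected); \textbf{(ii)} $\Phi$ is injective if and only if $X$ is a tree. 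Indeed, an isomorphism is precisely an injective surjective homomorphism, so (i) and (ii) yield: $X$ a tree $\iff$ $\Phi$ injective $\iff$ $\Phi$ an isomorphism.

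For (i), I would set $H=\mathrm{Im}(\Phi)$, the subgroup of $G$ generated by all vertex groups $G_v$ and all $g_y$, and let $D\subseteq X$ be the fundamental domain formed by $T'$ together with, for each $y\in(\mathrm{Edge}(Y)\setminus\mathrm{Edge}(T))\cap A$, one lifted edge $j(y)$ with $\alpha(j(y))\in T'$, and its opposite. Then $HD$ is an $H$-invariant subgraph, and a short case analysis on an edge $e$ of $X$ incident to a vertex of $D$ (its image in $Y$ is a tree edge, a non-tree edge, or the reverse of one) shows that every edge incident to a vertex of $HD$ again lies in $HD$: in each case $e$ is obtained from an edge of $D$ by an element of a vertex group or by some $g_y^{\pm1}$, hence by an element of $H$. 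Since $X$ is connected, $HD=X$; fixing $v_0=j(\bar v_0)$ and taking any $g\in G$, the vertex $gv_0$ lies in $H\cdot\mathrm{Vert}(T')$ and, being over $\bar v_0$, must equal $hv_0$ for some $h\in H$, so $h^{-1}g\in\mathrm{Stab}_G(v_0)=G_{\bar v_0}\subseteq H$ and $g\in H$. Hence $H=G$.

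For (ii), the tool I would invoke is the normal-form (Britton's-lemma type) theorem for fundamental groups of graphs of groups \cite{Serre}: every $\gamma\in\pi$ admits a \emph{reduced} expression $\gamma=r_0y_1r_1\cdots y_nr_n$, where $\bar v_0\xrightarrow{y_1}\bar v_1\to\cdots\xrightarrow{y_n}\bar v_n=\bar v_0$ is a closed edge-path in $Y$, each $r_i\in G_{\bar v_i}$, and no syllable is reducible (one never has $y_{i+1}=\overline{y_i}$ together with $r_i\in\sigma_{y_i}(G_{y_i})$); moreover a reduced expression with $n\ge1$ represents a nontrivial element of $\pi$. Via the $G$-action such an expression traces out a path $c_\gamma$ in $X$ from $j(\bar v_0)$ to $\Phi(\gamma)\,j(\bar v_0)$, and --- this is the crux --- because $\mathrm{Stab}_X(j(\bar v))=G_{\bar v}$ and $\mathrm{Stab}_X(j(y))=G_y$, the path $c_\gamma$ backtracks at its $i$-th step precisely when the $i$-th syllable of $\gamma$ is reducible; in particular $c_\gamma$ is a reduced path whenever $\gamma$ is reduced. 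If $X$ is a tree and $\gamma\in\ker\Phi$, then $c_\gamma$ is a reduced \emph{closed} path in a tree, so $n=0$, $\gamma=r_0\in G_{\bar v_0}$, and $\Phi(\gamma)=r_0=1$ forces $\gamma=1$ (the inclusion $G_{\bar v_0}\hookrightarrow G$ being injective); thus $\Phi$ is injective. Conversely, if $X$ is connected but not a tree it contains a reduced closed path $c$ of length $\ge1$, which I may assume based at some $j(\bar v_0)$; recording coset representatives of vertex and edge stabilizers along $c$ yields a reduced expression $\gamma_c=r_0y_1r_1\cdots y_nr_n$ with $n\ge1$, and since $c$ is closed $h:=\Phi(\gamma_c)$ fixes $j(\bar v_0)$, so $h\in G_{\bar v_0}$; then $\gamma':=\gamma_c\,h^{-1}=r_0y_1r_1\cdots y_n(r_nh^{-1})$ is still reduced with $n\ge1$, hence $\gamma'\ne1$ in $\pi$, while $\Phi(\gamma')=hh^{-1}=1$. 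So $\ker\Phi\ne1$ and $\Phi$ is not an isomorphism.

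The whole weight of the argument sits in (ii), and within it in the dictionary between the combinatorial reducedness of a word in $\pi$ and the geometric absence of backtracking of the associated path in $X$; once that dictionary and the normal-form theorem are in hand, the observation that a reduced closed path in a tree has length zero finishes everything. The remaining ingredients --- surjectivity, and the bookkeeping around $T$, its lift $T'$, and the translating elements $g_y$ (especially when edges are traversed against their orientation in $A$) --- are routine, and I would refer to \cite{Serre,GroupActingOnGraphs} for the details only sketched here.
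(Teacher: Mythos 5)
This statement is quoted in the paper's Section \ref{Sect.BS} purely as background: the paper gives no proof of it at all, deferring to \cite{Serre,GroupActingOnGraphs}, so there is no in-paper argument to compare yours against. Your sketch is the standard textbook proof and is essentially correct: part (i) (surjectivity of $\Phi$ from connectedness of $X$, via the $H$-invariance of the saturation of a fundamental domain $D=T'\cup\{j(y)\}$) is the argument in Serre, \S I.5.3, and part (ii) correctly locates the whole difficulty in the dictionary between reduced expressions $r_0y_1r_1\cdots y_nr_n$ and reduced edge-paths in $X$, combined with the normal form theorem. Two caveats worth making explicit. First, your argument is only as strong as the normal form theorem you invoke; that theorem must be proved independently of the structure theorem (as it is in Serre, via the universal cover $\widetilde{X}_{(\mathcal{G},Y)}$ and a van der Waerden-style argument), otherwise the reasoning is circular --- you should say where it comes from. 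Second, in the converse direction of (ii), a circuit of $X$ need not pass through $j(\bar v_0)$; one must conjugate by a reduced path $p$ from $j(\bar v_0)$ meeting the circuit only at its endpoint so that $p\,c\,p^{-1}$ is still a reduced closed path, and one must check (as you implicitly do) that replacing the last syllable $r_n$ by $r_nh^{-1}$ does not destroy reducedness, which holds because the reducedness condition only constrains the interior syllables $r_1,\dots,r_{n-1}$. With those two points filled in, the proof is complete and is exactly the classical one the paper is citing.
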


\section{Maximal subgroups}\label{sec: maximal sub}

Let $e\in E(S_1*_US_2)$, the lobe graph $\mathcal{T}_e =\mathcal{T}(Host({S\Gamma}(e)))$ can be
seen as a directed tree $(Vert(\mathcal{T}_e),Edge(\mathcal{T}_e),\alpha,\omega)$ where
$Vert(\mathcal{T}_e)$ is the set of lobes of union of all hots $Host({S\Gamma}(e))$. $Edge(\mathcal{T}_e)$ is formed
by the pairs of adjacent lobes of $Host({S\Gamma}(e))$, oriented so that,
for each $y=(\Delta,\Delta')\in Edge(\mathcal{T}_e)$, $\alpha(y)$ is the lobe
$\Delta$ colored $1$ and $\omega(y)$ is the lobe $\Delta'$
colored $2$. Lemma \ref{lemma:InducedAuthomorphismOnTree} and Proposition \ref{prop:IsomorphismOpuntoidHost} prove that the group
$H_e^{S_1*_U S_2}\simeq Aut(S\Gamma(e))$ acts on $\mathcal{T}_e$ without inversions. Thus we can build a graph of groups $(\mathcal{G}(-),G\backslash
\mathcal{T}_e)$ starting from the action of the group $G=H_e^{S_1*_U S_2}$ on the
connected non-empty graph $\mathcal{T}_e$ as
described in the previous section.  As in Section \ref{Sect.BS}, the quotient graph $G\backslash
\mathcal{T}_e$ will be denoted by $Y$.  Theorem \ref{structure
theorem} gives us immediately  the following:

\begin{cor}\label{prop:MaximalSubgroupIsomFundamentalGroup}
Let $e\in E(S_1*_US_2)$ be an idempotent in the amalgamated free
product of two finite inverse semigroups $S_1,S_2$.  Let  $Y=H_e^{S_1*_U S_2}\backslash
\mathcal{T}_e$. Then
$$
H_e^{S_1*_U S_2}\simeq \pi(\mathcal{G}(-),Y).
$$
\end{cor}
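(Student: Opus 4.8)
The plan is to obtain this corollary as a direct application of the structure theorem of Bass--Serre theory (Theorem \ref{structure theorem}), once the relevant group action has been set up. The key preliminary observation — already recorded in the text preceding the statement — is that $\mathcal{T}_e$ is a \emph{tree}: it is the lobe graph of the opuntoid graph $Host({S\Gamma}(e))$, which is a subopuntoid subgraph of the complete opuntoid graph $S\Gamma(X,R\cup W;e)$, and lobe graphs of opuntoid graphs are trees by definition. Thus the first step is simply to note that $\mathcal{T}_e$ is a connected non-empty tree, so the hypothesis of Theorem \ref{structure theorem} on the underlying graph $X=\mathcal{T}_e$ is met.

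The second step is to identify the acting group and verify the action is without inversions. By Proposition \ref{prop:old Ste}(2), $H_e^{S_1*_US_2}\simeq Aut(S\Gamma(e))$, and by Proposition \ref{prop:IsomorphismOpuntoidHost} this in turn is isomorphic to $Aut(Host({S\Gamma}(e)))$ — every automorphism of the complete opuntoid graph restricts to an automorphism of the union of hosts, and conversely every automorphism of the union of hosts extends uniquely. An automorphism of $Host({S\Gamma}(e))$ induces, by Lemma \ref{lemma:InducedAuthomorphismOnTree}, an automorphism of the lobe tree $\mathcal{T}_e$ that carries lobes isomorphically to lobes; hence $G=H_e^{S_1*_US_2}$ acts on the tree $\mathcal{T}_e$ by graph automorphisms. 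This action is without inversions because the orientation of $Edge(\mathcal{T}_e)$ is defined using the colors of the lobes (the $1$-colored endpoint is $\alpha(y)$, the $2$-colored endpoint is $\omega(y)$), and graph homomorphisms between opuntoid graphs preserve the colors of edges and hence of lobes; so no automorphism can swap the two endpoints of an edge.

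With these two points in place, Theorem \ref{structure theorem} applies verbatim: since $G$ acts without inversions on the connected graph $\mathcal{T}_e$, and $\mathcal{T}_e$ is a tree, the canonical homomorphism $\Phi:\pi(\mathcal{G}(-),G\backslash\mathcal{T}_e)\to G$ associated to the graph of groups $(\mathcal{G}(-),G\backslash\mathcal{T}_e)$ constructed as in Section \ref{Sect.BS} is an isomorphism. Writing $Y=G\backslash\mathcal{T}_e=H_e^{S_1*_US_2}\backslash\mathcal{T}_e$ gives exactly $H_e^{S_1*_US_2}\simeq\pi(\mathcal{G}(-),Y)$, which is the assertion of the corollary.

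There is essentially no hard step here — the corollary is a packaging statement that collects the geometric input (Propositions \ref{prop:old Ste}, \ref{prop:IsomorphismOpuntoidHost} and Lemma \ref{lemma:InducedAuthomorphismOnTree}, which identify the maximal subgroup with an automorphism group acting on a tree) and feeds it into Bass--Serre theory. The only point that deserves a word of care, rather than being a genuine obstacle, is checking that the action is without inversions; but as noted this is immediate from color-preservation of opuntoid-graph morphisms and the way the orientation of $\mathcal{T}_e$ was chosen. I would therefore keep the proof to a couple of sentences: observe $\mathcal{T}_e$ is a tree, note the without-inversions action already established in the surrounding text, and invoke Theorem \ref{structure theorem}.
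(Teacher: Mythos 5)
Your proposal is correct and follows exactly the route the paper takes: the corollary is stated as an immediate consequence of Theorem \ref{structure theorem} once one observes that $\mathcal{T}_e$ is a tree and that $H_e^{S_1*_US_2}\simeq Aut(S\Gamma(e))\simeq Aut(Host(S\Gamma(e)))$ acts on it without inversions via Lemma \ref{lemma:InducedAuthomorphismOnTree} and Proposition \ref{prop:IsomorphismOpuntoidHost}. Your added remark that the without-inversions property comes from color-preservation of morphisms together with the color-based orientation of $Edge(\mathcal{T}_e)$ is a correct and slightly more explicit justification of a point the paper leaves implicit.
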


To study the structure of maximal subgroups  in more details, we analyze the two following different situations:

\begin{itemize} 
    \item [-]{\bf Case 1:} 
    $e$ is an ``old idempotent'': $e$ is $\mathcal{D}^{S_1*_US_2}$-related to some idempotent of $S_1$ or $S_2$.
    \item [-]{\bf Case 2:}
    $e$ is a ``new idempotent'': $e$ is not $\mathcal{D}^{S_1*_US_2}$-related to any idempotent of $S_1$ or
    $S_2$.
\end{itemize}

\subsection{Case 1}

Although in general the lobes of \sch graphs of elements of amalgams of finite inverse semigroups are only DV-quotients,  if we restrict our attention to hosts of \sch graphs  of original idempotents the situation is nicer:

\begin{theorem}\label{theo:nomultiedge}
Let $e$ be an idempotent in $S_1$ or $S_2$. With the above
notations,  let  $Y=H_e^{S_1*_U S_2}\backslash
\mathcal{T}_e$. Then
\begin{enumerate}
    \item each $\Delta \in Vert(Y)$ is a \sch graph $S\Gamma(X_i,R_i;e_i(\nu))$ for some $\nu\in Vert(\Delta)$;
    \item $Y$ is finite;
    \item $(\Delta_1,\Delta_2)\in Edge(Y)$ if and only if the following
    conditions hold
    \begin{itemize}
        \item $e$ is $\mathcal{D}^{S_1*_US_2}$-related to some idempotent
        of $U$;
        \item $\Delta_i(\nu)\simeq S\Gamma(X_i,R_i;f)$ for some $f\in
        E(U)$;
        \item there is a lobe $\Delta_2'$ of
        $Host({S\Gamma}(e))$ such that $(\Delta_1,\Delta_2')\in
        Edge(\mathcal{T}_e)$, $\psi(\Delta_2')=\Delta_2$, for
        some automorphism $\psi\in Aut(Host({S\Gamma}(e)))$, and $e_i(\nu')=f$ for
        intersection vertex $\nu'$ between $\Delta_1$ and $\Delta_2'$.
    \end{itemize}
\end{enumerate}
\end{theorem}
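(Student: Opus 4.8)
The plan is to analyse $S\Gamma(X,R\cup W;e)$ according to whether it has exactly one host or more than one; by Theorem~\ref{theo:characterization of more than one host} this amounts to asking whether $e$ fails to be, or is, $\mathcal{D}^{S_1*_US_2}$-related to an idempotent of $U$. Suppose first that $e$ is not so related, so $S\Gamma(e)$ has a unique host. Since $e\in E(S_i)$ is represented by a word over the single alphabet $X_i$, the linear automaton $lin(e)$ is a single monochromatic lobe, and since the constructions producing $Core(w)$ in \cite{Finite} operate on lobes and intersection vertices and do not increase the number of lobes, $Core(e)$ is a single lobe, closed relative to $\langle X|R\rangle$ and hence, being monochromatic of colour $i$, closed relative to $\langle X_i|R_i\rangle$; by Stephen's theory it is therefore isomorphic to $S\Gamma(X_i,R_i;e)$. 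As the unique host lies in $\Gamma_0=Core(e)$ and a subopuntoid subgraph of a one-lobe opuntoid graph is that lobe, the host equals $S\Gamma(X_i,R_i;e)$, at whose base vertex $\nu_0$ one has $e_i(\nu_0)=e$; this proves (1), while (2) is trivial (one vertex) and (3) is vacuous. In particular, since $e$ is an old idempotent, $Host(S\Gamma(e))$ has more than one lobe if and only if $S\Gamma(e)$ has more than one host, if and only if $e\,\mathcal{D}^{S_1*_US_2}f$ for some $f\in E(U)$.

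Now assume $e\,\mathcal{D}^{S_1*_US_2}f$ with $f\in E(U)$. By Proposition~\ref{Prop:host set} every host is a lobe, so every lobe of $Host(S\Gamma(e))$ is a host, and by Theorem~\ref{theo:characterization of more than one host}(2) each such lobe $\Delta$ is isomorphic to $S\Gamma(X_i,R_i;g)$ for some $g\in E(U)$, $i\in\{1,2\}$. A direct computation with loops in \sch graphs over one alphabet shows that for every vertex $\nu$ of $S\Gamma(X_i,R_i;g)$, writing $\nu$ also for the corresponding element of $S_i$, $e_i(\nu)=\nu^{-1}\nu\,\mathcal{D}^{S_i}\,\nu\nu^{-1}=g$, so $S\Gamma(X_i,R_i;e_i(\nu))\simeq S\Gamma(X_i,R_i;g)\simeq\Delta$ by Proposition~\ref{prop:old Ste}; this proves (1) in this case as well.

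For (2): if $Host(S\Gamma(e))$ is finite there is nothing to prove, so assume it is infinite. Since $U\subseteq S_1$ is finite, $E(U)$ is finite, hence there are only finitely many isomorphism types of graphs $S\Gamma(X_i,R_i;g)$ with $g\in E(U)$, $i\in\{1,2\}$, and thus only finitely many isomorphism types of lobes of $Host(S\Gamma(e))$. Two isomorphic lobes lie in the same orbit of $G=H_e^{S_1*_US_2}\simeq Aut(Host(S\Gamma(e)))$: an isomorphism between them, viewed as an isomorphism between subopuntoid subgraphs each containing a host (each lobe being itself a host), extends by Proposition~\ref{Prop:ExtensionToIsomorphism} to an automorphism of the complete opuntoid graph $S\Gamma(e)$, which by the argument of Proposition~\ref{prop:IsomorphismOpuntoidHost} restricts to an element of $Aut(Host(S\Gamma(e)))$ carrying one lobe to the other. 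Hence $G$ has finitely many orbits of lobes, so $Vert(Y)$ is finite. Finally each lobe is a finite graph and each vertex of $Host(S\Gamma(e))$ lies on at most two lobes, so $\mathcal{T}_e$ is a locally finite tree; a group acting on a locally finite graph with finitely many vertex orbits has finitely many edge orbits (translate one endpoint of a given edge onto one of the finitely many orbit representatives, which has finitely many incident edges), so $Edge(Y)$ is finite, proving (2).

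For (3), $Edge(Y)$ is by definition the set of $G$-orbits of $Edge(\mathcal{T}_e)$; identifying $Vert(Y)$ with the chosen lobe representatives $\Delta_1,\Delta_2$ and lifting edges through the tree $\mathcal{T}_e$ (lifting a $Y$-edge to an edge of $\mathcal{T}_e$ and translating one endpoint to $\Delta_1$, and conversely pushing forward), one obtains that $(\Delta_1,\Delta_2)\in Edge(Y)$ if and only if there is a lobe $\Delta_2'$ of $Host(S\Gamma(e))$ with $(\Delta_1,\Delta_2')\in Edge(\mathcal{T}_e)$ and $\psi(\Delta_2')=\Delta_2$ for some $\psi\in Aut(Host(S\Gamma(e)))$. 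Applying Proposition~\ref{prop:hosts which are sch} at the intersection vertex $\nu'$ of the adjacent lobes $\Delta_1$ (coloured $1$) and $\Delta_2'$ (coloured $2$), and setting $f:=e_1(\nu')$, gives $f\in E(U)$, $\Delta_1\simeq S\Gamma(X_1,R_1;f)$ and $\Delta_2'\simeq S\Gamma(X_2,R_2;f)$; this is exactly the third bullet. The first two bullets are then forced in the forward direction: the existence of an edge of $\mathcal{T}_e$ means $Host(S\Gamma(e))$ has more than one lobe, so by the first paragraph $e\,\mathcal{D}^{S_1*_US_2}f'$ for some $f'\in E(U)$ (first bullet), and $\Delta_1\simeq S\Gamma(X_1,R_1;f)$ with $f\in E(U)$ (second bullet); conversely, given the three bullets, the third already produces the required $\Delta_2'$ and $\psi$, so $(\Delta_1,\Delta_2)\in Edge(Y)$. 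I expect the main obstacle to be the one-host case of (1) --- extracting from the constructions of \cite{Finite} that the Core of a word representing an idempotent of a single $S_i$ is a single lobe which is a genuine \sch graph, so that the unique host is an honest $S\Gamma(X_i,R_i;e)$ and not merely a $DV$-quotient --- together with the bookkeeping needed to reconcile the orientation and quotient-graph conventions of Section~\ref{Sect.BS} with the precise form of the conditions in (3).
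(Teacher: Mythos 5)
Your proof is correct and follows essentially the same route as the paper: the one-host case is settled by observing that $Core$ of a monochromatic word is a single lobe equal to $S\Gamma(X_i,R_i;e)$, the multi-host case invokes Theorem~\ref{theo:characterization of more than one host}, finiteness of $Y$ comes from the finitely many isomorphism types of hosts together with Propositions~\ref{Prop:ExtensionToIsomorphism} and~\ref{prop:IsomorphismOpuntoidHost}, and the edge condition is read off from orbit representatives via Proposition~\ref{prop:hosts which are sch}. You are in fact slightly more careful than the paper in two spots (checking finiteness of edge orbits via local finiteness of $\mathcal{T}_e$, and verifying $e_i(\nu)\,\mathcal{D}^{S_i}\,g$ so that every vertex of a host works in statement (1)), but the argument is the same.
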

\begin{proof}
Assume that $e$ is an idempotent of $S_1$. If we start from a word $u\in (X_{1}\cup X_{1}^{-1})^{*}$ equivalent to $e$ in $S_{1}$, then it is clear that the underlying graph $\Delta_{0}$ of $Core(u)$ is isomorphic to $S\Gamma(X_1,R_1;u)$. Thus $\Delta_{0}$ is a host of $\Gamma=S\Gamma(X,R\cup W;e)$. If $e$ is not $\mathcal{D}^{S_1*_US_2}$-related to any idempotent of $U$ then it is the unique host, otherwise $\Gamma$ has more than one host, each host is a lobe and a \sch graph of some idempotent of $U$ relative to the presentation $\la X_i|R_i\ra$,
for some $i\in\{1,2\}$ by Theorem \ref{theo:characterization of more than one host}, so statement 1 is proved.
\\
If $e$ is not $\mathcal{D}^{S_1*_US_2}$-related to any idempotent
of $U$ then $Y$ is trivially finite. So assume that $e$ is
$\mathcal{D}^{S_1*_US_2}$-related to some idempotent of $U$. Since by Theorem \ref{theo:characterization of more than one host}
all hosts of $S\Gamma(e)$ are \sch graphs of idempotents of $U$,
there are  at most $|U|$ different hosts up to isomorphisms. From
Propositions \ref{Prop:ExtensionToIsomorphism}, \ref{prop:IsomorphismOpuntoidHost} it follows that two
isomorphic hosts lie in the same $H_e^{S_1*_U
S_2}$-orbit, and so statement 2 holds.
\\
Let us prove statement 3. The ``if" part is trivial. We prove the
``only if" part. Let $(\Delta_1,\Delta_2)\in Edge(Y)$, then
$\Gamma$ has more than one host, hence by Theorem \ref{theo:characterization of more than one host}, $e$ is
$\mathcal{D}^{S_1*_US_2}$-related to some idempotent of $U$.
Moreover there is $(\Delta_1',\Delta_2')\in
Edge(\mathcal{T}_e)$ which lies in the same $H_e^{S_1*_U
S_2}$-orbit of $(\Delta_1,\Delta_2)$. The lobes
$\Delta_1',\Delta_2'$ are adjacent, feed off each other and share
at least a common vertex $q$. By Theorem
\ref{theo:characterization of more than one host} $\Delta_i'\simeq
S\Gamma(X_i,R_i;e_i(q))$ with $e_i(q)\in E(U)$ for $i=1,2$.  Moreover
there is an automorphism $\varphi$ of $Host({S\Gamma}(e))$ such that
$\varphi(\Delta_1')=\Delta_1$, then $\Delta_1$ and
$\varphi(\Delta_2')$ are adjacent lobes that share the vertex
$\varphi(q)=\nu'$. Hence $e_i(\nu')=e_i(q)\in E(U)$.
$\varphi(\Delta_2')\simeq S\Gamma(X_2,R_2;e_2(\nu'))$ is a host and
$(\Delta_1,\varphi(\Delta_2'))\in Edge(\mathcal{T}_e)$
lies in the same $H_e^{S_1*_U S_2}$-orbit of
$(\Delta_1,\Delta_2)$. Then there is an automorphism $\psi$ of
$Host({S\Gamma}(e))$ such that $\psi(\varphi(\Delta_2'))=\Delta_2$,
$\psi(\Delta_1)=\Delta_1$ and $\psi(\nu')=\nu$, hence $e_1(\nu)\in
E(U)$ and $\Delta_i\simeq S\Gamma(X_i,R_i;f)$ with $f=e_1(\nu)$.
\end{proof}
As an immediate consequence of the  previous proof, we get that the graph of groups, in the case when $e$ is an original idempotent but it is not $\mathcal{D}$-related in the amalgam to any idempotent in $U$,  consists of just a single vertex. This vertex corresponds to the \sch graph of some $g$ in $S_i$ and we get by, Proposition \ref{prop:old Ste}, the following description of maximal subgroups in this situation.

\begin{cor}\label{MaxNotD-relatedtoU}
Let $e$ be $\mathcal{D}$-related in $S_1*_US_2$ to some idempotent $g$
of $S_i \setminus U$,  $i=1,2$ , then  the maximal subgroup $H^{S_1*_US_2}_e \simeq H^{S_i}_g$.
\end{cor}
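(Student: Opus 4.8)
The plan is to reduce the statement to an identification of automorphism groups of \sch graphs, exploiting that $\mathcal{D}$-related idempotents have isomorphic maximal subgroups and that, in this case, the union of all hosts collapses to a single lobe which is a \sch graph of $S_i$.

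\emph{Step 1 (reduce $e$ to $g$).} Since $e\,\mathcal{D}^{S_1*_US_2}\,g$ with $e,g$ idempotent, Proposition \ref{prop:old Ste}(1) gives a $V$-isomorphism $S\Gamma(X,R\cup W;e)\simeq S\Gamma(X,R\cup W;g)$; combining this with Proposition \ref{prop:old Ste}(2) yields $H_e^{S_1*_US_2}\simeq Aut(S\Gamma(X,R\cup W;e))\simeq Aut(S\Gamma(X,R\cup W;g))$. It therefore suffices to prove $Aut(S\Gamma(X,R\cup W;g))\simeq H_g^{S_i}$.

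\emph{Step 2 (the unique host).} Pick a word $u\in (X_i\cup X_i^{-1})^{*}$ with $u\eta_i=g$. Exactly as in the proof of Theorem \ref{theo:nomultiedge}, the underlying graph $\Delta_0$ of $Core(u)$ is isomorphic to $S\Gamma(X_i,R_i;g)$ and is a host of $\Gamma:=S\Gamma(X,R\cup W;g)$. Because $g$ is an original idempotent which (being in $S_i\setminus U$, the Case $1$ situation under consideration) is not $\mathcal{D}^{S_1*_US_2}$-related to any idempotent of $U$, Theorem \ref{theo:characterization of more than one host} shows that $\Gamma$ does not have more than one host; hence $\Delta_0$ is its only host and $Host(\Gamma)=\Delta_0\simeq S\Gamma(X_i,R_i;g)$.

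\emph{Step 3 (pass to the host and back into $S_i$).} By Proposition \ref{Prop:Schutzenberger automata}, $\Gamma$ is a complete opuntoid graph possessing a host, so Proposition \ref{prop:IsomorphismOpuntoidHost} gives $Aut(\Gamma)\simeq Aut(Host(\Gamma))=Aut(S\Gamma(X_i,R_i;g))$. Finally, applying Proposition \ref{prop:old Ste}(2) to the presentation $\langle X_i|R_i\rangle$ of $S_i$, we get $Aut(S\Gamma(X_i,R_i;g))\simeq H_g^{S_i}$. Chaining the isomorphisms of Steps $1$--$3$ yields $H_e^{S_1*_US_2}\simeq H_g^{S_i}$. (Equivalently, one may route the last two steps through Corollary \ref{prop:MaximalSubgroupIsomFundamentalGroup}: by Theorem \ref{theo:nomultiedge} the quotient graph $Y$ is a single vertex with no edges, namely $S\Gamma(X_i,R_i;g)$, and its vertex group, identified through Proposition \ref{prop:IsomorphismOpuntoidHost}, is $H_g^{S_i}$, so $\pi(\mathcal{G}(-),Y)\simeq H_g^{S_i}$.)

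I expect the only genuinely delicate point to be Step $2$: one must be certain that the hypothesis ``$g\in S_i\setminus U$'' in this case really forces $S\Gamma(X,R\cup W;g)$ to have a \emph{unique} host, i.e. that $g$ is not $\mathcal{D}^{S_1*_US_2}$-related to any idempotent of $U$ --- this is precisely the dichotomy furnished by Theorem \ref{theo:characterization of more than one host}. Everything else is routine transport of structure along the canonical isomorphisms $H\leftrightarrow Aut(S\Gamma)$.
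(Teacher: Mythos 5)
Your proof is correct and follows essentially the same route as the paper: the corollary is obtained there as an immediate consequence of (the proof of) Theorem \ref{theo:nomultiedge} --- the host is unique and equal to $S\Gamma(X_i,R_i;g)$, so the quotient graph $Y$ is a single vertex --- combined with Propositions \ref{prop:IsomorphismOpuntoidHost} and \ref{prop:old Ste}, exactly as in your Steps 2--3 (and your Step 1 is the same transport along $\mathcal{D}$-related idempotents).

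The one delicate point you flag in Step 2 is genuine, but it is a defect of the corollary's literal wording rather than of your argument. The hypothesis ``$e\,\mathcal{D}^{S_1*_US_2}\,g$ for some idempotent $g\in S_i\setminus U$'' does not by itself exclude that $e$ is \emph{also} $\mathcal{D}^{S_1*_US_2}$-related to some $f\in E(U)$ (for instance, $g$ could already be $\mathcal{D}^{S_i}$-related to such an $f$ inside $S_i$); in that situation Theorem \ref{theo:characterization of more than one host} produces several hosts, and the conclusion can fail --- $H_e^{S_1*_US_2}$ may even be infinite by Theorem \ref{theo: finiteness conditions}, whereas $H_g^{S_i}$ is a subgroup of the finite semigroup $S_i$. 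The paper's own lead-in sentence (and the label of the corollary) makes clear that the intended hypothesis is that $e$ is $\mathcal{D}^{S_1*_US_2}$-related to an original idempotent but \emph{not} to any idempotent of $U$; under that reading your Step 2 is immediate from Theorem \ref{theo:characterization of more than one host} and the proof is complete.
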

Let $e$ be $\mathcal{D}$-related in $S_1*_US_2$ to some idempotent
of $U$, then by Theorem \ref{theo:characterization of more than
one host} the \sch graph $S\Gamma(X,R\cup W;e)$ has more than one host and each host is a lobe
and a \sch graph of some idempotent of $U$ relative to the
presentation $\la X_i|R_i\ra$ of $S_i$, for some $i\in\{1,2\}$.
Moreover, by Lemma \ref{theo:nomultiedge} the graph $Y$ is finite and
in this case the graph of groups $(\mathcal{H}_e(-),Z_e)$ built
according to Bennett's construction can be used to describe the
structure of the maximal subgroup of $S_1*_US_2$ containing $e$.
Namely, we can prove, along the same line of the proof of Theorem 2
in \cite{Ben2} that the graph of groups $(\mathcal{G}(-),Y)$,
built starting from the action of the group $G=H_e^{S_1*_U S_2}$
on the connected non-empty graph $\mathcal{T}_e$ as
described in the Section \ref{Sect.BS}, is conjugate isomorphic to
the graph of groups $(\mathcal{H}_e(-),Z_e)$. However, the graph of groups $(\mathcal{G}(-),Y)$ gives us more information, thanks also to Theorem \ref{theo:nomultiedge} from which we can obtain a better description of the associated groups which are stabilizers of vertices and edges in $\mathcal{T}_{e}$. Indeed, by Theorem \ref{theo:nomultiedge} vertices of $\mathcal{T}_{e}$ are \sch graphs of idempotents belonging to $U$. Thus we immediately derive from Proposition \ref{prop:old Ste} and Propositions \ref{Prop:ExtensionToIsomorphism}, \ref{prop:IsomorphismOpuntoidHost} that the stabilizers of the vertices appearing in the graph of groups $(\mathcal{G}(-),Y)$ are maximal subgroups of idempotents of $U$ in the original semigroups $S_{1}$, or $S_{2}$ (depending on the color of the lobe). The next proposition gives us a description of the stabilizer of an edge in $\mathcal{T}_{e}$, but first we need the following lemma.
\begin{lemma}\label{lem:stabilazer of edge is maximal subgroup}
Let $(\nu,\Delta,\nu)=\mathcal{A}(X_k,R_k,f)$ for some
$k\in\{1,2\}$ and $f\in U$. Let $
I(\nu,\Delta)=\{y\in V(\Delta):(\nu,u,y)\:\mbox{is a path in}\;\Delta\;\mbox{for some}\; u\in U\}$. Then
$$ 
H_f^U\simeq\{\varphi\in
Aut(\Delta):\varphi(I(\nu,\Delta))\subseteq I(\nu,\Delta)\}
$$
\end{lemma}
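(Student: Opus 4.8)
The plan is to exploit the concrete form of the isomorphism $Aut(\Delta)\cong H^{S_k}_{\phi_k(f)}$ underlying Proposition~\ref{prop:old Ste}(2), i.e.\ Stephen's isomorphism (\cite{Steph}). Write $\Delta=S\Gamma(X_k,R_k;f)$; since $(\nu,\Delta,\nu)=\mathcal A(X_k,R_k,f)$ the element $f$ is idempotent, $\nu$ is the vertex $\phi_k(f)$, and $V(\Delta)=R^{S_k}_{\phi_k(f)}$. Every automorphism of $\Delta$ is then a left translation $\lambda_h\colon r\mapsto hr$ for a unique $h\in H^{S_k}_{\phi_k(f)}$, and $\lambda_h(\nu)=h\phi_k(f)=h$. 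Since automorphisms of a connected deterministic inverse word graph that agree on one vertex coincide, the map $u\mapsto\lambda_{\phi_k(u)}$ is injective and multiplicative, so the whole statement reduces to proving
$$
\{\,h\in H^{S_k}_{\phi_k(f)}\ :\ \lambda_h(I(\nu,\Delta))\subseteq I(\nu,\Delta)\,\}=\phi_k(H^U_f).
$$

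The key preparatory step is the identification $I(\nu,\Delta)=\phi_k(R^U_f)$, where $R^U_f$ is the $\mathcal R$-class of $f$ in $U$. Here I would use that the embedding $\phi_k\colon U\to S_k$ is injective and both preserves and reflects $\mathcal R$ (as $a\,\mathcal R\,b$ iff $aa^{-1}=bb^{-1}$). The inclusion ``$\subseteq$'' is immediate: if $(\nu,u,y)$ with $u\in U$ is a path in $\Delta$, then $y=\phi_k(f)\phi_k(u)=\phi_k(fu)$ is a vertex of $\Delta$, hence $\phi_k\big((fu)(fu)^{-1}\big)=\phi_k(f)$ and so $fu\in R^U_f$. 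For ``$\supseteq$'', given $v\in R^U_f$ one has $v=fv$, so the path from $\nu$ labelled $\phi_k(v)$ would end at $\phi_k(f)\phi_k(v)=\phi_k(v)$; that such a path actually exists is the point where I would be careful, as it rests on the standard fact that in a Sch\"utzenberger graph any two vertices of the common $\mathcal R$-class are joined by a path carrying any label that realises the corresponding product (\cite{Steph}). This reachability fact is the only genuinely non-computational ingredient of the argument.

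Granting $I(\nu,\Delta)=\phi_k(R^U_f)$, the displayed equality follows by direct verification. If $\lambda_h(I(\nu,\Delta))\subseteq I(\nu,\Delta)$, then since $\nu=\phi_k(f)\in\phi_k(R^U_f)$ we get $h=\lambda_h(\nu)\in\phi_k(R^U_f)\subseteq\phi_k(U)$, say $h=\phi_k(u)$; from $hh^{-1}=h^{-1}h=\phi_k(f)$ and injectivity of $\phi_k$ it follows that $uu^{-1}=u^{-1}u=f$, i.e.\ $u\in H^U_f$. Conversely, if $u\in H^U_f$ then $\phi_k(u)\in H^{S_k}_{\phi_k(f)}$, and for every $v\in R^U_f$ one has $\lambda_{\phi_k(u)}(\phi_k(v))=\phi_k(uv)$ with $(uv)(uv)^{-1}=u(vv^{-1})u^{-1}=ufu^{-1}=uu^{-1}=f$, so $uv\in R^U_f$ and $\lambda_{\phi_k(u)}$ preserves $I(\nu,\Delta)$. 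Hence the left-hand set equals $\{\lambda_{\phi_k(u)}:u\in H^U_f\}$, and $u\mapsto\lambda_{\phi_k(u)}$ yields the asserted isomorphism $H^U_f\simeq\{\varphi\in Aut(\Delta):\varphi(I(\nu,\Delta))\subseteq I(\nu,\Delta)\}$.
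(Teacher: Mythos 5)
Your proof is correct and follows essentially the same route as the paper: both realize $Aut(\Delta)$ as translations by elements of $H^{S_k}_{\phi_k(f)}$ via Stephen's isomorphism and then characterize the translations preserving $I(\nu,\Delta)$ by what they do to the single vertex $\nu$. The only cosmetic difference is that you isolate the identification $I(\nu,\Delta)=\phi_k(R^U_f)$ as an explicit intermediate step (the paper argues directly with $U$-labelled paths from $\nu$), which makes the final verification a purely algebraic computation but does not change the substance of the argument.
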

\begin{proof}
Theorem 3.5 of \cite{Steph} shows that $H_f^k\simeq Aut(\Delta)$
by the isomorphism $m\mapsto \phi_m$ defined by
$\phi_m(v)=m^{-1}v$. Since $\phi_{k}$ is an embedding of $H_f^U$ into $H_f^k$, then $H_f^U$ also embeds into $Aut(\Delta)$. We claim that the map $u\mapsto \psi_{\phi_k(u)}$ defined by
and $\psi_{\phi_k(u)}(v)=\phi_k(u^{-1})v$ with $v\in
V(\Delta)$, $u\in H_f^U$ is an isomorphism from $H_f^U$ onto $\{\varphi\in
Aut(\Delta):\varphi(I(\nu,\Delta))\subseteq
I(\nu,\Delta)\}$. To show that
$\psi_{\phi_k(u)}\in\{\varphi\in
Aut(\Delta):\varphi(I(\nu,\Delta))\subseteq I(\nu,\Delta)\}$ it is enough to prove that
$\psi_{\phi_k(u)}(\nu)\in I(\nu,\Delta)$ since each
element of $I(\nu,\Delta)$ is connected to $\nu$ by some
element of $U$ and $\psi_{\phi_k(u)}\in Aut(\Delta)$. Since
$f$ is the unity of $H_f^U$ we get $u=fuf$, moreover since
$\nu=\phi_k(f)$ we get:
$$
\psi_{\phi_k(u)}(\nu)=\phi_k(fu^{-1}f)\phi_k(f)=
\phi_k(f)\phi_k(fu^{-1}f)=\nu\phi_k(fu^{-1}f)
$$
so by \cite{Steph} $fu^{-1}f$ labels a path connecting $\nu$ to $\psi_{\phi_k(u)}(\nu)$, whence $\psi_{\psi_k(u)}(\nu)\in I(\nu,\Delta)$. It remains to show that $u\mapsto \psi_{\phi_k(u)}$ is surjective. Let $\psi\in\{\varphi\in Aut(\Delta):\varphi(I(\nu,\Delta))\subseteq I(\nu,\Delta)\}$ then there is some $u\in U$ which labels
a path in $\Delta$ connecting $\nu$ to $\psi(\nu)$. Since $\psi$ is an
automorphism also $fuf$ labels a path connecting $\nu$ to $\psi(\nu)$. Note that the
element $fu^{-1}f\in H_f^U$ and $\nu\phi_k(fuf)=\psi(\nu)$. Thus, consider 
the automorphism $\psi_{\phi_k(fu^{-1}f)}$, then:
$$
\psi_{\phi_k(fu^{-1}f)}(\nu)=\psi_{\phi_k(fu^{-1}f)}(\phi_k(f))=\phi_k(f)\phi_k(fuf)=\nu\phi_k(fuf)=\psi(\nu)
$$
and so $\psi_{\phi_k(fu^{-1}f)}=\psi$ since they coincide on a vertex.
\end{proof}
\begin{prop}\label{prop:stabilizer of an edge is maximal subgroup of U}
Let $\Delta_1,\Delta_2$ be two adjacent lobes of $Host({S\Gamma}(e))$ and let $f=f(e_{1}(\nu))=f(e_{2}(\nu))$ for some intersection vertex $\nu\in V(\Delta_1)\cap V(\Delta_2)$. If we let $e=(\Delta_{1},\Delta_{2})$, then $Stab_{G}(e)\simeq H_f^U$ ($G=H_e^{S_1*_U S_2}$).
\end{prop}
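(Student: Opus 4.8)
The plan is to realize $Stab_G(e)$, via restriction to the lobe $\Delta_1$, as precisely the subgroup of $Aut(\Delta_1)$ occurring in Lemma \ref{lem:stabilazer of edge is maximal subgroup}, and then to quote that lemma. Throughout I identify $G=H_e^{S_1*_US_2}$ with $Aut(Host(S\Gamma(e)))$ by Propositions \ref{prop:old Ste} and \ref{prop:IsomorphismOpuntoidHost}, so that the action of $G$ on $\mathcal{T}_e$ becomes the natural action of $Aut(Host(S\Gamma(e)))$ on its lobe tree. Since automorphisms of inverse word graphs preserve colors and the action is without inversions, the stabilizer of the edge $e=(\Delta_1,\Delta_2)$ is
$$
Stab_G(e)=\{\varphi\in Aut(Host(S\Gamma(e))):\varphi(\Delta_1)=\Delta_1,\ \varphi(\Delta_2)=\Delta_2\}.
$$
Moreover, the very existence of this edge forces $S\Gamma(e)$ to have more than one host, so by Theorem \ref{theo:characterization of more than one host} we have $e\,\mathcal{D}^{S_1*_US_2}\,f'$ for some $f'\in E(U)$ and, by Proposition \ref{Prop:host set}, both $\Delta_1$ and $\Delta_2$ are hosts of $S\Gamma(e)$ which are \sch graphs of idempotents of $U$; being adjacent hosts they feed off each other, so Proposition \ref{prop:hosts which are sch} applied at $\nu$ gives $e_1(\nu)=e_2(\nu)=f\in E(U)$ and $(\nu,\Delta_i,\nu)\simeq\mathcal{A}(X_i,R_i;f)$ for $i=1,2$. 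In particular Lemma \ref{lem:stabilazer of edge is maximal subgroup} applies to $\Delta_1$ with $k=1$.

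The bridge between the two lobes is the identity $I(\nu,\Delta_1)=V(\Delta_1)\cap V(\Delta_2)=I(\nu,\Delta_2)$: by the assimilation property every vertex shared by $\Delta_1$ and $\Delta_2$ is joined to $\nu$ by a $U$-path lying in both lobes, and conversely, by the related pair separation property, the endpoint of any $U$-path issuing from $\nu$ is again an intersection vertex of $\Delta_1$ and $\Delta_2$. Consequently any $\varphi\in Stab_G(e)$ maps $I(\nu,\Delta_1)=V(\Delta_1)\cap V(\Delta_2)$ onto itself, so the restriction map
$$
\chi:Stab_G(e)\longrightarrow\{\psi\in Aut(\Delta_1):\psi(I(\nu,\Delta_1))\subseteq I(\nu,\Delta_1)\},\qquad\chi(\varphi)=\varphi|_{\Delta_1},
$$
is a well-defined group homomorphism. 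It is injective because $Host(S\Gamma(e))$ is a connected deterministic inverse word graph, and an automorphism of such a graph that fixes a vertex (here, a vertex of $\Delta_1$) is the identity.

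For surjectivity I would argue as follows. Given $\psi$ in the target group, observe that $\Delta_1$ is itself a host of the complete opuntoid graph $S\Gamma(e)$ (Proposition \ref{Prop:Schutzenberger automata}), hence a subopuntoid subgraph of $S\Gamma(e)$ containing a host; by Proposition \ref{Prop:ExtensionToIsomorphism} the isomorphism $\psi\colon\Delta_1\to\Delta_1$ extends to an automorphism $\varphi^*$ of $S\Gamma(e)$, and by (the proof of) Proposition \ref{prop:IsomorphismOpuntoidHost} its restriction $\varphi$ to $Host(S\Gamma(e))$ is an automorphism with $\varphi|_{\Delta_1}=\psi$. Then $\varphi(\Delta_1)=\Delta_1$, while $\varphi(\nu)=\psi(\nu)\in I(\nu,\Delta_1)\subseteq V(\Delta_2)$ is an intersection vertex whose unique color-$2$ lobe is $\Delta_2$; since $\varphi(\Delta_2)$ is a color-$2$ lobe passing through $\varphi(\nu)=\psi(\nu)$, we conclude $\varphi(\Delta_2)=\Delta_2$, so $\varphi\in Stab_G(e)$ and $\chi(\varphi)=\psi$. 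Thus $\chi$ is an isomorphism, and composing it with the isomorphism of Lemma \ref{lem:stabilazer of edge is maximal subgroup} gives $Stab_G(e)\simeq H_f^U$.

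I expect the surjectivity step to be the main obstacle: since $Host(S\Gamma(e))$ is generally not complete, Proposition \ref{Prop:ExtensionToIsomorphism} cannot be applied to it directly and one must route the extension of $\psi$ through the complete graph $S\Gamma(e)$, and then it must be checked with some care that the resulting automorphism sends the lobe $\Delta_2$ to itself, and not to some other color-$2$ lobe adjacent to $\Delta_1$ — which is exactly the point where the equality $I(\nu,\Delta_1)=V(\Delta_1)\cap V(\Delta_2)$ and the uniqueness of the color-$2$ lobe through $\psi(\nu)$ enter.
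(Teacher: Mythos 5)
Your proposal is correct and follows essentially the same route as the paper: identify $Stab_G(e)$, via restriction/extension (Propositions \ref{Prop:ExtensionToIsomorphism} and \ref{prop:IsomorphismOpuntoidHost}), with $\{\psi\in Aut(\Delta_1):\psi(V(\Delta_1)\cap V(\Delta_2))\subseteq V(\Delta_1)\cap V(\Delta_2)\}$, use the assimilation/related-pair-separation properties to equate $V(\Delta_1)\cap V(\Delta_2)$ with $I(\nu,\Delta_1)$, and conclude by Lemma \ref{lem:stabilazer of edge is maximal subgroup}. The extra care you take with surjectivity (routing the extension through the complete graph $S\Gamma(e)$ and checking $\varphi(\Delta_2)=\Delta_2$) only fills in details the paper declares ``straightforward.''
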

\begin{proof}
Using Propositions \ref{Prop:ExtensionToIsomorphism} and \ref{prop:IsomorphismOpuntoidHost}, it is straightforward to check that $Stab_{G}(e)$ is isomorphic to $\{\varphi\in Aut(\Delta_1):\varphi(V(\Delta_1)\cap V(\Delta_2))\subseteq V(\Delta_1)\cap V(\Delta_2)\}$. By the assimilation property we have 
$$
V(\Delta_{1})\cap V(\Delta_{2})=\{y\in V(\Delta_{1}):(\nu,u,y)\:\mbox{is a path
in}\;\Delta_{1}\;\mbox{for some}\; u\in U\}.
$$
and so $Stab_{G}(e)\simeq H_f^U$ by Lemma \ref{lem:stabilazer of edge is maximal subgroup}.
\end{proof}
For the clarity of the presentation we record the previous facts in the following theorem.
\begin{theorem}
With the above notations, if $e$ is $\mathcal{D}^{S_{1}*_{U}S_{2}}$-related to some idempotent of $U$, then 
$$
H_e^{S_1*_U S_2}\simeq \pi(\mathcal{G}(-),Y)
$$
where $Y$ is finite. Moreover, the group $G_{v}$, $v\in Vert(Y)$, is a maximal subgroups in $S_{1}$ or $S_{2}$ of some idempotents of $U$, while $G_{e}$, $e\in Edge(Y)$, is a maximal subgroup in $U$. 
\end{theorem}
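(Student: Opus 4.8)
The plan is to assemble this theorem directly from the results already established in this section, so the proof will largely be a matter of citing and combining them. First I would invoke Corollary \ref{prop:MaximalSubgroupIsomFundamentalGroup}, which gives $H_e^{S_1*_U S_2}\simeq \pi(\mathcal{G}(-),Y)$ with $Y=H_e^{S_1*_U S_2}\backslash \mathcal{T}_e$ in full generality, coming from Theorem \ref{structure theorem} applied to the action of $G=H_e^{S_1*_U S_2}$ on the tree $\mathcal{T}_e$. Then, under the hypothesis that $e$ is $\mathcal{D}^{S_1*_US_2}$-related to some idempotent of $U$, the finiteness of $Y$ is exactly statement 2 of Theorem \ref{theo:nomultiedge}, so that part needs only a pointer.

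Next I would identify the vertex groups. By statement 1 of Theorem \ref{theo:nomultiedge}, each vertex $\Delta$ of $Y$ is (represented by) a \sch graph $S\Gamma(X_i,R_i;e_i(\nu))$ where, by Theorem \ref{theo:characterization of more than one host}, $e_i(\nu)$ is an idempotent of $U$; the vertex group $G_\Delta$ is the stabilizer $Stab_G(j(\Delta))$, which via Propositions \ref{Prop:ExtensionToIsomorphism} and \ref{prop:IsomorphismOpuntoidHost} is isomorphic to $Aut(\Delta)=Aut(S\Gamma(X_i,R_i;e_i(\nu)))$, and this by Proposition \ref{prop:old Ste}(2) is the maximal subgroup $H_{e_i(\nu)}^{S_i}$ of $S_i$. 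For the edge groups, I would apply Proposition \ref{prop:stabilizer of an edge is maximal subgroup of U}: for an edge $e=(\Delta_1,\Delta_2)$ of $\mathcal{T}_e$ with intersection vertex $\nu$ and $f=f(e_1(\nu))=f(e_2(\nu))$, the stabilizer $Stab_G(e)$ is isomorphic to $H_f^U$, a maximal subgroup of $U$. Since edges of $Y$ are orbits of edges of $\mathcal{T}_e$ under $G$, and the edge group of $Y$ is the stabilizer of a chosen lift, this is precisely the claim. A small point to address is that the graph of groups construction of Section \ref{Sect.BS} also twists the edge monomorphisms $\tau_y$ by conjugation for edges outside the chosen maximal subtree; but conjugation is an automorphism, so this does not change the isomorphism type of $G_y$, and the statement only asserts facts about the isomorphism types of the $G_v$ and $G_e$.

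The only genuine obstacle I anticipate is bookkeeping rather than mathematics: one must be careful that the vertices and edges of $Y$ — which are $G$-orbits of lobes and of adjacent lobe pairs — really do correspond to the data to which Theorem \ref{theo:nomultiedge}, Proposition \ref{prop:stabilizer of an edge is maximal subgroup of U} and Lemma \ref{lem:stabilazer of edge is maximal subgroup} apply, i.e. that picking a representative lobe or edge in each orbit gives the claimed stabilizer groups, and that the passage from $Aut(\Delta)$ to stabilizers in $G$ (Propositions \ref{Prop:ExtensionToIsomorphism} and \ref{prop:IsomorphismOpuntoidHost}) respects the inclusions $G_y\hookrightarrow G_{\alpha(y)}$. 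None of this requires new ideas; it is an application of the already-proven extension and isomorphism results. Accordingly I would write the proof as: "This is immediate from Corollary \ref{prop:MaximalSubgroupIsomFundamentalGroup} together with Theorem \ref{theo:nomultiedge} for the finiteness of $Y$; the description of the vertex groups follows from Theorem \ref{theo:nomultiedge}(1), Theorem \ref{theo:characterization of more than one host}, Proposition \ref{prop:old Ste} and Propositions \ref{Prop:ExtensionToIsomorphism}, \ref{prop:IsomorphismOpuntoidHost}, and the description of the edge groups from Proposition \ref{prop:stabilizer of an edge is maximal subgroup of U}."
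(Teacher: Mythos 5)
Your proposal is correct and matches the paper's intent exactly: the paper states this theorem with no separate proof, explicitly introducing it as a record of ``the previous facts,'' namely Corollary \ref{prop:MaximalSubgroupIsomFundamentalGroup}, Theorem \ref{theo:nomultiedge}, the identification of vertex stabilizers via Propositions \ref{prop:old Ste}, \ref{Prop:ExtensionToIsomorphism}, \ref{prop:IsomorphismOpuntoidHost}, and Proposition \ref{prop:stabilizer of an edge is maximal subgroup of U} for the edge groups. Your assembly of these same results (including the remark that the conjugation twist on $\tau_y$ does not affect the isomorphism type of the edge groups) is precisely the intended argument.
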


Since $Y$ is finite, from \cite[page 14]{GroupActingOnGraphs} follows that, $H_e^{S_1*_U S_2}$ is built by iteratively performing an amalgamated free-product
for each edge belonging to the maximal subtree $T$ of $Y$, followed by HNN-extensions for each edge not in $T$. Therefore the next natural steps is to characterize whether $Y$ is a tree or not. This clearly gives us a characterization which reveals whether the construction of $H_e^{S_1*_U S_2}$ involves just iterated group amalgams, or it  also involves HNN-extensions. First we characterize the case when $H_e^{S_1*_U S_2}$ is finite.

\begin{prop}
Let $e\in E(S_1*_U S_2)$ with $e \mathcal{D}^{S_1*_US_2}f$ for
some $f\in E(U)$. Then $H_e^{S_1*_U S_2}$ is finite if and only if $H_e^{S_1*_U S_2}\simeq H^{S_k}_g$, for some $g\in
E(U)$, $k\in\{1,2\}$
\end{prop}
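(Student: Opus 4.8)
The plan is to read the statement off the Bass--Serre description of $H_e^{S_1*_US_2}$ obtained above, together with Proposition \ref{prop:old Ste}(2) (Stephen's identification of automorphism groups of \sch graphs with maximal subgroups) and the structural results of Sections \ref{sec: automorphism}--\ref{sec: union of hosts}. The ``if'' direction is immediate: if $H_e^{S_1*_US_2}\simeq H_g^{S_k}$ for some $g\in E(U)$ and $k\in\{1,2\}$, then $H_g^{S_k}$ is a subgroup of the finite semigroup $S_k$, hence finite, so $H_e^{S_1*_US_2}$ is finite.

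For the ``only if'' direction, put $G=H_e^{S_1*_US_2}$ and recall that $G\simeq Aut(S\Gamma(e))\simeq Aut(Host({S\Gamma}(e)))$ by Proposition \ref{prop:old Ste}(2) and Proposition \ref{prop:IsomorphismOpuntoidHost} (note that $S\Gamma(e)$ is a complete opuntoid graph possessing a host by Proposition \ref{Prop:Schutzenberger automata}). Assume $G$ is finite. The first step is to show that the lobe tree $\mathcal{T}_e$ of $Host({S\Gamma}(e))$ is finite. Suppose it is not; then $Host({S\Gamma}(e))$ is infinite, so by Proposition \ref{prop:characterization of infinite host set} there is a shift-isomorphism $\phi$ between two hosts of $S\Gamma(e)$, realized by a reduced lobe path $\Delta_0,\ldots,\Delta_n$. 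Extend $\phi$ to $\overline{\phi}\in Aut(Host({S\Gamma}(e)))\simeq G$ via Propositions \ref{Prop:ExtensionToIsomorphism} and \ref{prop:IsomorphismOpuntoidHost}. The induction carried out inside the proof of the implication $4)\Rightarrow 2)$ of Proposition \ref{prop:characterization of infinite host set} shows that $\overline{\phi}$ carries $\Delta_0,\ldots,\Delta_n$ onto a reduced lobe path $\Delta_n,\ldots,\Delta_{2n}$ whose lobes beyond $\Delta_n$ are all new; iterating, $d(\Delta_0,\overline{\phi}^m(\Delta_0))=mn$ in $\mathcal{T}_e$ for every $m\geq 1$, so $\overline{\phi}$ has infinite order, contradicting the finiteness of $G$. (Alternatively, here $e\mathcal{D}^{S_1*_US_2}f$ with $f\in E(U)$ forces $e$ to be $\mathcal{D}^{S_1*_US_2}$-related to the genuine idempotent $f$ of $S_1$ or $S_2$, so by Proposition \ref{prop:old Ste}(1) and Theorem \ref{theo:nomultiedge} the quotient $Y=G\backslash\mathcal{T}_e$ is finite; since each $G$-orbit of vertices has at most $|G|$ elements, $\mathcal{T}_e$ is finite.) Hence $\mathcal{T}_e$ is a finite tree.

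Now $G$ acts on the finite tree $\mathcal{T}_e$ without inversions, as noted at the beginning of Section \ref{sec: maximal sub}, and the automorphism group of a finite tree fixes a vertex or an edge; since the action is without inversions, in the edge case both endpoints are fixed, so in any case $G$ fixes some lobe $\Delta_0$ of $Host({S\Gamma}(e))$. Because $e\mathcal{D}^{S_1*_US_2}f$ with $f\in E(U)$, Theorem \ref{theo:characterization of more than one host} and Proposition \ref{Prop:host set} ensure that $\Delta_0$ is a host, hence a \sch graph $S\Gamma(X_k,R_k;g)$ for some $g\in E(U)$ and $k\in\{1,2\}$. Restriction to $\Delta_0$ then defines a homomorphism $G\to Aut(\Delta_0)$; it is injective because an automorphism of the deterministic inverse word graph $Host({S\Gamma}(e))$ fixing one vertex of $\Delta_0$ is the identity, and it is surjective because every $\psi\in Aut(\Delta_0)$ extends to some $\psi^{*}\in Aut(S\Gamma(e))$ by Proposition \ref{Prop:ExtensionToIsomorphism}, and then $\psi^{*}|_{Host({S\Gamma}(e))}\in G$ restricts to $\psi$ on $\Delta_0$. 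Therefore $G\simeq Aut(\Delta_0)=Aut(S\Gamma(X_k,R_k;g))\simeq H_g^{S_k}$ by Proposition \ref{prop:old Ste}(2), which is the desired isomorphism.

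I expect the step proving that finiteness of $G$ forces $\mathcal{T}_e$ to be finite to be the main obstacle: a finite group can perfectly well act on an infinite tree, so one must genuinely exploit that $G$ is the \emph{full} automorphism group of $Host({S\Gamma}(e))$, the key point being that any shift-isomorphism becomes, after extension, an infinite-order translation of $\mathcal{T}_e$. Everything afterwards is a routine combination of the fixed-point property of finite trees with the extension results of Section \ref{sec: automorphism} and Stephen's identification of automorphism groups of \sch graphs with maximal subgroups.
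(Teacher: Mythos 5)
Your proposal is correct and follows essentially the same route as the paper: the equivalence of finiteness of $H_e^{S_1*_US_2}$ with finiteness of $Host(S\Gamma(e))$ via shift-isomorphisms (Proposition \ref{prop:characterization of infinite host set} together with Propositions \ref{Prop:ExtensionToIsomorphism} and \ref{prop:IsomorphismOpuntoidHost}), then the fixed-vertex-or-edge property of automorphism groups of finite trees to produce a lobe $\Delta_0\simeq S\Gamma(X_k,R_k;g)$, $g\in E(U)$, stabilized by the whole group, and finally $Aut(Host(S\Gamma(e)))\simeq Aut(\Delta_0)\simeq H_g^{S_k}$ by Stephen's theorem. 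You merely spell out details the paper leaves as ``straightforward'' (the infinite order of the extended shift-isomorphism, and the injectivity/surjectivity of the restriction map), so no substantive difference to report.
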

\begin{proof}
By Proposition \ref{prop:characterization of infinite host set} and Propositions \ref{Prop:ExtensionToIsomorphism} and \ref{prop:IsomorphismOpuntoidHost}, $H_e^{S_1*_U S_2}$ is finite if and only if $Host({S\Gamma}(f))$ is
finite. If $Host({S\Gamma}(e))$ is finite, since the automorphism group of a finite tree fixes a vertex or an edge (see \cite[Subsection 27.1.3]{Bab}), then it is straightforward to check that in this case each automorphism $\varphi$ of $Host({S\Gamma}(e))$ has to fix a lobe $\Delta=S\Gamma(X_k,R_k;g)$, for some $k\in\{1,2\}$, $g\in E(U)$. Thus $Aut(Host({S\Gamma}(e)))\simeq Aut (\Delta)$, whence $H_e^{S_1*_U S_2}\simeq H^{S_k}_g$. The converse is trivial.
\end{proof}
For infinite maximal subgroups we have the following characterization.
\begin{theorem}\label{theo: finiteness conditions}
Let $e\in E(S_1*_U S_2)$ with $e \mathcal{D}^{S_1*_US_2}f$ for
some $f\in E(U)$. Then the following are equipvalent:
\begin{enumerate}
\item $H_e^{S_1*_U S_2}$ is infinite;
\item there is a sequence $f_1,f_2,....,f_{2t-2}$ of idempotents of $U$ for some $t>1$ such that:
\begin{itemize}
\item $f \mathcal{D}^{S_1*_US_2}f_{1}$,
\item for each $1<i\le 2t-2$, $f_{i-1}$ and $f_i$ are not $\mathcal{D}^{U}$-related,
\item there is some $k\in \{1,2\}$ such that $f_{1}\mathcal{D}^{S_k}f_{t}\mathcal{D}^{S_k}f_{2t-2}$, and for each $1<i<2t-2$ even $f_{i-1}\mathcal{D}^{S_{3-k}}f_i\mathcal{D}^{S_{k}}f_{i+1}$, and $f_{2t-3}\mathcal{D}^{S_{3-k}}f_{2t-2}$
\end{itemize}
\item $Y=H_e^{S_1*_U S_2}\backslash
\mathcal{T}_e$ is not a tree.
\end{enumerate}
\end{theorem}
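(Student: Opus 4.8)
The plan is to establish the cycle $1) \Leftrightarrow 3) \Leftrightarrow 2)$, leaning heavily on the machinery already set up. First I would prove $1) \Leftrightarrow 3)$: since $Y$ is a quotient of a tree $\mathcal{T}_e = \mathcal{T}(Host(S\Gamma(e)))$ by a group action without inversions, and since (by Corollary \ref{prop:MaximalSubgroupIsomFundamentalGroup}) $H_e^{S_1*_U S_2} \simeq \pi(\mathcal{G}(-),Y)$, I would argue that $Y$ is a finite graph (this needs an argument: there are only finitely many isomorphism types of hosts, each a \sch graph of an idempotent of $U$, so finitely many vertex-orbits; and each lobe has finitely many neighbours, so finitely many edge-orbits). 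If $Y$ is a finite tree, then by the standard decomposition \cite{GroupActingOnGraphs} the fundamental group is an iterated amalgamated product of the finite vertex groups, hence could still be infinite --- so this direction is subtler than it first appears. Actually the correct dichotomy to exploit is: if $Y$ is a tree then $\mathcal{T}_e$ is already essentially determined by $Y$ together with the edge-identifications, but more to the point, I would use Proposition \ref{prop:characterization of infinite host set} and Corollary \ref{cor:characterization of infinite host set}: $H_e^{S_1*_U S_2}$ is infinite iff $Host(S\Gamma(e))$ has infinitely many lobes iff $\mathcal{T}_e$ is an infinite tree. Then I claim $\mathcal{T}_e$ is infinite iff $Y$ is not a tree: if $Y$ were a finite tree, a subtree of $\mathcal{T}_e$ would lift it, and since the only identifications come from the group action, $\mathcal{T}_e$ would be covered by finitely many $G$-translates of a finite subtree, forcing $G$ infinite only if there is a nontrivial loop in $Y$; conversely a loop in $Y$ lifts to an infinite reduced path in $\mathcal{T}_e$ (the shift-isomorphism of Proposition \ref{prop:characterization of infinite host set} produces exactly such a loop). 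I would make this precise using the Bass-Serre correspondence: $G$ infinite and acting on the tree $\mathcal{T}_e$ with finite vertex stabilizers (the $H^U_g$, $H^{S_k}_g$ of Proposition \ref{prop:stabilizer of an edge is maximal subgroup} and the preceding discussion, all finite since $S_1,S_2$ are finite) forces the quotient to have first Betti number $\ge 1$, i.e. not a tree, and conversely.

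Next I would prove $3) \Leftrightarrow 2)$, the combinatorial translation. By Theorem \ref{theo:characterization of more than one host} and Theorem \ref{theo:nomultiedge}, the vertices of $\mathcal{T}_e$ (when $e\,\mathcal{D}\,f$, $f\in E(U)$) are \sch graphs $S\Gamma(X_i,R_i;g)$ of idempotents $g\in E(U)$, alternating in colour along any reduced path, and adjacency $\Delta_1 \mapsto \Delta_2$ corresponds to a shared idempotent $f' = e_i(\nu) = e_{3-i}(\nu) \in E(U)$ (Proposition \ref{prop:hosts which are sch}). Two hosts of the same colour are in the same $G$-orbit iff they are isomorphic as \sch graphs, i.e. iff the defining idempotents are $\mathcal{D}^{S_k}$-related (Proposition \ref{prop:old Ste}(1) together with Propositions \ref{Prop:ExtensionToIsomorphism}, \ref{prop:IsomorphismOpuntoidHost}). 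A non-trivial loop in $Y$ therefore corresponds to a reduced lobe path $\Delta_0, \Delta_1, \ldots, \Delta_{2t-2}$ in $Host(S\Gamma(e))$ (of even length, since colours alternate and endpoints have the same colour) with $\Delta_0 \simeq \Delta_{t} \simeq \Delta_{2t-2}$ but with the intermediate edge-idempotents not forcing the loop to collapse --- this is precisely Corollary \ref{cor:characterization of infinite host set}. Reading off the idempotents $f_i = e_{\cdot}(\nu_i) \in E(U)$ labelling the intersection vertices $\nu_i \in V(\Delta_{i-1}) \cap V(\Delta_i)$, the condition that consecutive lobes are genuinely distinct (not collapsed by an automorphism) translates to $f_{i-1}$ and $f_i$ not being $\mathcal{D}^U$-related, while $\Delta_{i-1} \simeq \Delta_{i+1}$ as \sch graphs over $\langle X_k | R_k\rangle$ gives $f_{i-1} \mathcal{D}^{S_k} f_{i+1}$; the endpoint conditions $\Delta_0 \simeq \Delta_t \simeq \Delta_{2t-2}$ give $f_1 \mathcal{D}^{S_k} f_t \mathcal{D}^{S_k} f_{2t-2}$, and $f\,\mathcal{D}^{S_1*_US_2} f_1$ records that the path starts in the host tree of $e$.

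The main obstacle I expect is the bookkeeping in $3) \Leftrightarrow 2)$: correctly pinning down which $\mathcal{D}$-relation (in $U$, in $S_k$, or in $S_1 *_U S_2$) governs each coincidence, and getting the parity and indexing of the sequence $f_1, \ldots, f_{2t-2}$ exactly right --- in particular verifying that "$f_{i-1}$ and $f_i$ not $\mathcal{D}^U$-related'' is the precise obstruction to the loop in $Y$ collapsing to a shorter one, and dually that such a sequence genuinely assembles into a reduced lobe path realizing a loop (one must check that the assimilation property and loop equality property are not violated, so that the prescribed lobes and identifications actually occur inside some $Host(S\Gamma(e))$ --- here one invokes that $e\,\mathcal{D}\,f$ and builds the path by iterated Construction 5, using Propositions \ref{prop:hosts which are sch} and \ref{prop:lifting} to control the idempotents). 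The direction $2) \Rightarrow 3)$ is the delicate one: from the abstract data of idempotents and $\mathcal{D}$-relations I must reconstruct an actual infinite portion of the host tree, which amounts to showing the combinatorial gluing conditions suffice to run the geometric construction. I would handle this by induction on $t$, at each stage extending the reduced lobe path by one lobe via Construction 5 at the vertex carrying $f_i$, checking the new lobe is the \sch graph of $f_i$ and is a host, and finally invoking Corollary \ref{cor:characterization of infinite host set} to conclude infiniteness.
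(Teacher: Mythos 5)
Your plan for $1)\Leftrightarrow 2)$ is essentially the paper's: both routes reduce to the geometric criterion of Corollary \ref{cor:characterization of infinite host set} (a reduced lobe path with $\Delta_0\simeq\Delta_t\simeq\Delta_{2t}$), read off the idempotents at the intersection vertices via Proposition \ref{prop:hosts which are sch}, and for the converse rebuild the path lobe by lobe with Construction 5, using ``$f_{i-1}$ and $f_i$ not $\mathcal{D}^U$-related'' to guarantee that the vertex carrying $f_i$ is not assimilated to the previous intersection vertex, so a genuinely new lobe appears and the path stays reduced. That part of the proposal is sound and matches the paper.

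The gap is in $1)\Leftrightarrow 3)$. The principle you commit to --- that an infinite group acting on a tree with finite vertex stabilizers forces the quotient graph to have first Betti number $\ge 1$ --- is false as a statement of Bass--Serre theory: $\mathbb{Z}/2*\mathbb{Z}/2$ acts on the line with finite vertex stabilizers and trivial edge stabilizers, yet the quotient is a single edge, a tree. (You in fact observe one sentence earlier that a finite tree of finite groups can have infinite fundamental group, and then assert the incompatible claim anyway.) So $1)\Rightarrow 3)$ cannot follow from generalities about finite stabilizers; it needs the rigidity specific to hosts. The paper's argument is: infiniteness yields, by Proposition \ref{prop:characterization of infinite host set}, a \emph{shift}-isomorphism $\varphi$ along a reduced lobe path $\Delta_1,\ldots,\Delta_{2t-1}$, and the defining property of a shift-isomorphism --- that $\varphi$ does \emph{not} carry the edge $(\Delta_1,\Delta_2)$ to $(\Delta_{2t-1},\Delta_{2t-2})$ --- is exactly what prevents the image of the path in $Y$ from folding at the identified vertex, so a non-trivial loop survives. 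Conversely, a reduced loop in $Y$ lifts to a reduced lobe path whose endpoint lobes lie in one orbit, and reducedness of the loop forces the identifying automorphism to restrict to a shift-isomorphism, whence infiniteness again by Proposition \ref{prop:characterization of infinite host set}. Your parenthetical remark about shift-isomorphisms points in the right direction, but the argument you actually propose to ``make precise'' is the false general principle, and your converse (``a loop lifts to an infinite reduced path'') skips precisely the step where reducedness of the loop is converted into the shift property.
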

\begin{proof}
Again, by Proposition \ref{prop:characterization of infinite host set} and Propositions \ref{Prop:ExtensionToIsomorphism} and\ref{prop:IsomorphismOpuntoidHost}, $H_e^{S_1*_U S_2}$ is infinite if and only if $Host({S\Gamma}(f))$ is
infinite. Moreover by Corollary \ref{cor:characterization of infinite host set} $Host({S\Gamma}(f))$ is infinite if and only if there is a reduced lobe path
$$
P:\Delta_{1},\ldots,\Delta_{t},\ldots,\Delta_{2t-1}
$$
with $\Delta_{1}\simeq\Delta_{t}\simeq\Delta_{2t-1}$. We prove the equivalence $1 \Leftrightarrow 2$ by showing that this geometric characterization is equivalent to the algebraic conditions described in the statement 2.
\\
$1)\Rightarrow 2$) Take any intersection vertex $\nu_i$ of $V(\Delta_{i})\cap V(\Delta_{i+1})$ for $1\le i\le 2t-2$ of $P$. Assume without loss of generality that the color of $\Delta_1$ is $1$, by Proposition \ref{prop:hosts which are sch} we have a sequence 
$$
e_1(\nu_1)=e_2(\nu_1),e_2(\nu_2)=e_1(\nu_2),\ldots, e_2(\nu_{2t-2})=e_1(\nu_{2t-2})
$$ 
of idempotents of $U$ with $e_{1}(\nu_{1})\mathcal{D}^{S_{1}*_{U}S_{2}} f$. Put $f_{i}=e_{1}(\nu_{i})$. Since $\Delta_{1}\simeq\Delta_{t}\simeq\Delta_{2t-1}$, then $f_1\mathcal{D}^{1}f_{t}\mathcal{D}^{1} f_{2t-2}$. Moreover it is straightforward to check that this sequence satisfies also the other conditions of statement 2. 
\\
$2)\Rightarrow 1$) Assuming without loss of generality $k=1$, then
$\Delta_1=S\Gamma(X_1,R_1;f_{1})$ is a host of $\Gamma=S\Gamma(X,R\cup
W;f)\simeq S\Gamma(X,R\cup W; e)$. Let $\nu_1\in V(\Delta_1)$
such that $e_1(\nu_1)=f_1$. Since $\Gamma$ is complete $\nu_1$ is an intersection vertex, so let $\Delta_2$ be the lobe of $\Gamma$ that shares the vertex $\nu_1$ with $\Delta_1$. Then $\Delta_2 =S\Gamma(X_2,R_2;f_1)$ is a host by Proposition \ref{prop:hosts which are sch}. Since $f_{1}\mathcal{D}^{S_2}f_2$ and $f_{2}$ is not $\mathcal{D}^{U}$ related to $f_{1}$, then there is a vertex $\nu_{2}\in V(\Delta_{2})$ which is not connected to $\nu_{1}$ by any path labeled by an element in $U$ and $e_{2}(\nu_{2})=f_{2}$. Thus $\nu_{2}$ does not belong to the intersection vertices of $\Delta_{1}, \Delta_{2}$, and so there is a lobe $\Delta_{3}$, different from $\Delta_{1}$, such that $\nu_{2}\in V(\Delta_{2})\cap V(\Delta_{3})$ and $\Delta_{3}\simeq S\Gamma(X_{1},R_{1};f_{2})$ is a host by Proposition \ref{prop:hosts which are sch}. Using now $f_{2}\mathcal{D}^{S_{1}} f_{3}$ and the fact that $f_{2}$ and $f_3$ are not $\mathcal{D}^{U}$-related we get that there is a vertex $\nu_{3}$ with $e_{1}(\nu_{3})=f_{3}$ which is not an intersection vertex between $\Delta_{2},\Delta_{3}$. Continuing in this way we build a reduced lobe path $P:\Delta_{1},\ldots,\Delta_{t},\ldots,\Delta_{2t-1}
$ such that $\nu_{i}\in V(\Delta_{1})\cap V(\Delta_{i+1})$ for $1\le i\le 2t-2$, with $e_{1}(\nu_{1})=f_{1}, e_{1}(\nu_{t})=f_{t}, e_{1}(\nu_{2t-2})=f_{2t-2}$. Hence, since $f_{1}\mathcal{D}^{S_1}f_{t}\mathcal{D}^{S_1}f_{2t-2}$, we get $\Delta_{1}\simeq\Delta_{t}\simeq\Delta_{2t-1}$.
\\
$1)\Rightarrow 3$) By Proposition \ref{prop:characterization of infinite host set} there is shift-isomorphism $\varphi$. Hence, there is a reduce lobe path $P:\Delta_{1},\ldots,\Delta_{2t-1}$ such that $\Delta_{1}$ is sent onto $\Delta_{2t-1}$ by $\varphi$ and the automorphism on the lobe graph induced by $\varphi$ does not map the edge $(\Delta_{1},\Delta_{2})$ into the edge $(\Delta_{2t-1},\Delta_{2t-2})$. Therefore, these two edges do not belong to the same $H_e^{S_1*_U S_2}$-orbit, so in $Y$ there is a non-trivial loop $P'$ induced by $P$.
\\
$3)\Rightarrow 1$) A reduced loop $P'$ in $Y$ lifts to a reduced lobe path $P:\Delta_{1},$ $\ldots,\Delta_{2t-1}$ in $Host(S\Gamma(f))$ for some $t>1$ and with $\Delta_{1},\Delta_{2t-1}$ belonging to the same $H_e^{S_1*_U S_2}$-orbit. Hence there is an automorphism $\varphi\in Aut(Host(S\Gamma(f)))$ which sends $\Delta_{1}$ onto $\Delta_{2t-1}$. Furthermore, any automorphism does not send the edge $(\Delta_{1},\Delta_{2})$ into the edge $(\Delta_{2t-1},\Delta_{2t-2})$, otherwise $P'$ would no be reduced. Hence, $\varphi|_{\Delta_{1}}:\Delta_{1}\rf\Delta_{2t-1}$ is a shift-isomorphism. Therefore, by Proposition \ref{prop:characterization of infinite host set} $Host(S\Gamma(f))$ is infinite
\end{proof}
From the above theorem we obtain that $Y$ is a tree if and only if $H_{e}^{S_{1}*_{U}S_{2}}$ is finite. This is equivalent to the fact that the only case when $H_{e}^{S_{1}*_{U}S_{2}}$ is isomorphic to iterated amalgams of groups is when $H_{e}^{S_{1}*_{U}S_{2}}$ is finite.
\begin{rem}\label{re:respects the J-order}
We recall that an amalgam $[S_1,S_2;U]$ respects the
$\mathcal{J}$-order if for each $e_1,e_2\in E(U)$,
$e_1\mathcal{J}^{S_k} e_2$ implies $e_1\mathcal{J}^{S_{3-k}} e_2$
for each $k\in\{1,2\}$. With such condition, if $e$ is
$\mathcal{J}^{S_1*_US_2}$-related to some idempotent $f\in U$,
then, using an argument similar to the one
in proof of Theorem \ref{theo: finiteness conditions}, it is straightforward to check that each host is isomorphic to
either $S\Gamma(X_1,R_1;f)$ or $S\Gamma(X_2,R_2;f)$. Therefore
$H_e^{S_1*_U S_2}$ is finite, $|Y|=2$, and
$$
H_e^{S_1*_U S_2}\simeq H_{f}^{S_{1}}*_{H_{f}^{U}} H_{f}^{S_{2}}.
$$
\end{rem}

\subsection{Case 2}

Let $e$ be not $\mathcal{D}^{S_1*_US_2}$-related to any idempotent of $S_1$ or $S_2$. Then $S\Gamma(X,R \cup W;e)$ has a unique host that is a
subopuntoid subgraph of the underlying graph of $Core(f)$.
Thus $H^{S_1*_US_2}_e$ stabilizes some lobes
$\Delta$ of the host. Since this lobe is finite for any $\nu\in
V(\Delta)$ there is a minimum idempotent, namely $e=e_k(\nu)$,
labelling a loop based at $\nu$. Thus, by \cite[Lemma 2]{Finite}
$(\nu,\Delta,\nu)$ is a DV-quotient of the \sch automaton
$\mathcal{A}(X_k,R_k;e)=(\alpha,\Sigma,\alpha)$ called in
\cite{Finite} the maximum determinizing \sch automaton of
$(\nu,\Delta,\nu)$. Denoting by $\pi:(\alpha,\Sigma,\alpha)\rf
(\nu,\Delta,\nu)$ the natural homomorphism induced by this
quotient we show that we can lift an automorphism $\phi$ of $\Delta$ to
an automorphism $\varphi$ of $\Sigma$ for which the following diagram
commutes:
$$
\setlength{\unitlength}{0.5mm}
\begin{picture}(63,48)(-32,-24)
\put(-32,-24){}
\node[Nframe=n,NLangle=0.0](n0)(-23.96,17.76){$\Sigma$}

\node[Nframe=n,NLangle=0.0](n1)(24.05,17.76){$\Sigma$}

\node[Nframe=n,NLangle=0.0](n2)(-23.96,-18.24){$\Delta$}

\node[Nframe=n,NLangle=0.0](n3)(24.05,-18.24){$\Delta$}

\drawedge(n0,n1){$\varphi$}

\drawedge(n2,n3){$\phi$}

\drawedge[ELside=r](n0,n2){$\pi$}

\drawedge(n1,n3){$\pi$}

\drawcbezier(3.14,7.28,-8.93,11.93,-14.64,-10.08,7.58,-5.42)
\end{picture}
$$

\begin{theorem}\label{lifting property}
Let $(\nu,\Delta,\nu)$ be a closed inverse automaton
relative to the presentation $\langle X_k|R_k\rangle$ for some
$k\in\{1,2\}$. With the above notation let
$(\alpha,\Sigma,\alpha)$ be the maximum determinizing \sch
automaton of $(\nu,\Delta,\nu)$ where $\pi(\alpha)=\nu$. Then every
automorphism $\phi\in Aut(\Delta)$ can be lifted to an
automorphism $\varphi\in Aut(\Sigma)$ such that $\varphi\circ\pi=\pi\circ\phi$. Moreover there is a group epimorphism from the subgroup $H:=\{\varphi\in
Aut(\Sigma):\exists\phi\in Aut(\Delta),
\varphi\circ\pi=\pi\circ\phi\}$ of $Aut(\Sigma)$ onto
$Aut(\Delta)$ with kernel $N=H\cap S$ where $S=\{\varphi\in
Aut(\Sigma):\varphi(\pi^{-1}(\nu))\subseteq \pi^{-1}(\nu)\}$.
\begin{proof}
Let $\phi\in Aut(\Delta)$, let $\nu'=\phi(\nu)$. Since $\phi$ is
labelling preserving, then
$e_k(\nu')=e_k(\nu)=e$. Thus there is a word $w\in (X\cup
X^{-1})^*$ labelling a path $(\nu,w,\nu')$ in $\Delta$ such that
$ww^{-1}=w^{-1}w=e$. Since
$(\alpha,\Sigma,\alpha)=\mathcal{A}(X_k,R_k;e)$ there is also a
path $(\alpha,w,\alpha')$ for some $\alpha'\in V(\Sigma)$ and by
the minimality of $e_k(\nu')$ we get $e_k(\alpha')=e$. Therefore
$(\alpha',\Sigma,\alpha')$ and $(\alpha,\Sigma,\alpha)$ are \sch
automata that accept the same language, hence by Proposition
\ref{prop:old Ste} there is an automorphism $\varphi\in
Aut(\Sigma)$ such that $\varphi(\alpha)=\alpha'$. We prove that
$\varphi$ is the automorphism satisfying the lifting property
$\varphi\circ\pi=\pi\circ\phi$. For this purpose let $v$ be a
vertex of $\Sigma$ and let $r\in (X\cup X^{-1})^+$ be a word
labelling a path $(\alpha,r,v)$, so applying the automorphism
$\varphi$ this path goes to $(\alpha',r,v')$ with $v'=\varphi(v)$.
Consider $\pi(v)$ then clearly $(\nu,r,\pi(v))$ is a path in
$\Delta$ thus the image of this path by $\phi$ is
$(\nu',r,\phi(\pi(v)))$, hence $(\nu,wr,\phi(\pi(v)))$ is also a
path in $\Delta$. Consider now $\pi(v')$, since $(\alpha, wr,v')$
is a path in $\Sigma$ then $(\nu,wr,\pi(v'))$ is also a path in
$\Delta$, whence by the determinism of $\Delta$ we get
$\phi(\pi(v))=\pi(v')=\pi(\varphi(v))$.
\\
Let $H:=\{\varphi\in Aut(\Sigma):\exists\phi\in Aut(\Delta),
\varphi\circ\pi=\pi\circ\phi\}$.
It is straightforward checking that $H$ is a subgroup of $Aut(\Delta)$.

For any $\varphi\in H$, the relation
$\pi^{-1}\circ\varphi\circ\pi\subseteq V(\Delta)\times V(\Delta)$
is a function, since by definition of $H$ there is a $\phi$ such
that $\varphi\circ\pi=\pi\circ\phi$ and so, taking into account
that $\pi$ is surjective, then for any left inverse $\pi^{-1}$ we have
$$
\pi^{-1}\circ(\varphi\circ\pi)=\pi^{-1}\circ(\pi\circ\phi)=
(\pi^{-1}\circ\pi)\circ\phi=1_{\Delta}\circ\phi=\phi
$$
So there is a map $\lambda: H\rf Aut(\Delta)$ defined by
$\lambda(\varphi)=\pi^{-1}\circ\varphi\circ\pi$. Moreover
$\lambda$ is surjective since by the first statement of the
theorem for any $\phi\in Aut(\Delta)$ there is a $\varphi\in
Aut(\Sigma)$ such that $\pi\circ\phi=\varphi\circ\pi$ and so
$\varphi\in H$ and $\lambda(\varphi)=\phi$. It remains to show
that $\lambda$ is a homomorphism. Let $\varphi_i\in H$ and let
$\phi_i\in Aut(\Delta)$ such that
$\varphi_i\circ\pi=\pi\circ\phi_i$ for $i=1,2$, by the definitions we get 
\begin{eqnarray}
\nonumber  \lambda(\varphi_1\circ\varphi_2)&=&\pi^{-1}\circ(\varphi_1\circ\varphi_2)\circ\pi=\\
\nonumber&=&(\pi^{-1}\circ\varphi_1)\circ(\pi\circ\phi_2)=(\pi^{-1}\circ\varphi_1\circ\pi)\circ\phi_2=\\
\nonumber&=&\lambda(\varphi_1)\circ\phi_2=\lambda(\varphi_1)\circ\lambda(\varphi_2)
\end{eqnarray}
The last statement is a routine calculus which involves only the definitions of $H$ and $S$.
\end{proof}
\end{theorem}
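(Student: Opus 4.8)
The plan is to prove the two assertions in turn: first, that every $\phi\in Aut(\Delta)$ admits a lift $\varphi\in Aut(\Sigma)$ with $\varphi\circ\pi=\pi\circ\phi$; second, that reading this lift off from $\pi$ defines a well-defined surjective homomorphism $\lambda\colon H\to Aut(\Delta)$ with kernel $H\cap S$. The second part will be essentially formal once the first is in place.

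For the lift, I would start from $\phi\in Aut(\Delta)$, set $\nu'=\phi(\nu)$, and note that, because graph homomorphisms preserve labels, the minimum idempotent labelling a loop at $\nu'$ is again $e:=e_k(\nu)$. Since $\Delta$ is closed relative to $\langle X_k|R_k\rangle$, I can pick a word $w$ with $ww^{-1}=w^{-1}w=e$ labelling a path $(\nu,w,\nu')$ in $\Delta$. Reading $w$ from $\alpha$ in $\Sigma=\mathcal{A}(X_k,R_k;e)$ produces a path $(\alpha,w,\alpha')$; as $\pi$ sends it to $(\nu,w,\pi(\alpha'))$ and $\Delta$ is deterministic, $\pi(\alpha')=\nu'$, and since $\pi$ carries loops at $\alpha'$ to loops at $\nu'$ while $w^{-1}w=e$ already loops at $\alpha'$, we get $e_k(\alpha')=e$. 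Hence $(\alpha,\Sigma,\alpha)$ and $(\alpha',\Sigma,\alpha')$ are both \sch automata of $e$, so Proposition \ref{prop:old Ste} yields $\varphi\in Aut(\Sigma)$ with $\varphi(\alpha)=\alpha'$. To see that this $\varphi$ is the right one, I would take any vertex $v$ of $\Sigma$ and a word $r$ labelling $(\alpha,r,v)$; then $(\alpha',r,\varphi(v))$ is a path of $\Sigma$, so both $(\nu,wr,\phi(\pi(v)))$ (apply $\phi$ to $(\nu,r,\pi(v))$ and prepend $w$) and $(\nu,wr,\pi(\varphi(v)))$ are paths of $\Delta$; determinism of $\Delta$ forces $\phi(\pi(v))=\pi(\varphi(v))$, i.e.\ $\varphi\circ\pi=\pi\circ\phi$.

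For the second assertion, I would first check that $H$ is a subgroup of $Aut(\Sigma)$, which follows by composing and inverting the intertwining identities. The key point is uniqueness of the lift: if $\varphi\circ\pi=\pi\circ\phi$ then, $\pi$ being surjective, $\pi^{-1}\circ\varphi\circ\pi=\phi$ for every set-theoretic left inverse $\pi^{-1}$, so $\lambda(\varphi):=\pi^{-1}\circ\varphi\circ\pi$ is a well-defined element of $Aut(\Delta)$ independent of the choice of $\pi^{-1}$. It is onto by the first part, and $\lambda(\varphi_1\varphi_2)=\lambda(\varphi_1)\lambda(\varphi_2)$ is a one-line computation using $\varphi_2\circ\pi=\pi\circ\lambda(\varphi_2)$. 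Finally, $\varphi\in\ker\lambda$ iff its lift is $1_\Delta$, i.e.\ $\varphi\circ\pi=\pi$, which forces $\varphi$ to map each fibre of $\pi$ into itself, in particular $\varphi(\pi^{-1}(\nu))\subseteq\pi^{-1}(\nu)$, so $\varphi\in S$ and hence $\varphi\in H\cap S=N$; conversely, if $\varphi\in H\cap S$ then $\pi(\varphi(\alpha))=\nu=\pi(\alpha)$, so the lift $\phi$ fixes $\nu$, and an automorphism of a connected deterministic inverse word graph fixing a vertex is the identity, giving $\varphi\in\ker\lambda$. Thus $\ker\lambda=N$ and $\lambda$ is the claimed epimorphism.

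I expect the genuine difficulty to lie entirely in the first part: moving from an automorphism of the quotient $\Delta$ to an automorphism of the ambient graph $\Sigma$. Two places need care — confirming that the minimality defining $e_k$ transfers through $\pi$, so that $\alpha'$ is again a base vertex of a \sch automaton of $e$, and confirming that the automorphism handed to us abstractly by Proposition \ref{prop:old Ste} is precisely one compatible with $\pi$ and $\phi$ (this is the point of the determinism chase). After that, the homomorphism property, surjectivity and the kernel identification are routine.
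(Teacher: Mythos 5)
Your proposal is correct and follows essentially the same route as the paper's proof: construct $\alpha'$ by reading a word $w$ with $ww^{-1}=w^{-1}w=e$ from $\alpha$, check $e_k(\alpha')=e$ via the minimality transfer through $\pi$, obtain $\varphi$ from Proposition \ref{prop:old Ste}, and verify the intertwining identity by the determinism chase, then treat $\lambda(\varphi)=\pi^{-1}\circ\varphi\circ\pi$ formally. The only difference is that you spell out the kernel identification $N=H\cap S$ (using that an automorphism of a connected deterministic inverse word graph fixing a vertex is the identity), which the paper dismisses as a routine calculation.
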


Note that without the finiteness condition of the inverse
semigroup $S$, in general it is not possible to define the maximum
determinizing \sch automaton of a closed inverse word automaton
relative to the presentation $\langle X|R\rangle$ of the inverse
semigroup $S$. It is also quite easy to produce an example where
it is not possible to lift an automorphism of a closed DV-quotient
$\Delta$ of a \sch automaton $\Sigma$ to an automorphism of
$\Sigma$ (see \cite{Rthesys}). Moreover the subgroup $H$ in the
previous theorem is in general a proper subgroup of $Aut(\Sigma)$.
To prove this fact it is enough to consider the dihedral group
$D_4=Gp\langle r,s|r^2,s^2,(rs)^4 \rangle$ which is clearly a
finite inverse semigroup with only one \sch graph which is the
Cayley graph of $D_4$. If in the Cayley graph of $D_4$ we identify
the identity $e$ with $s$ and then we determinize, we obtain an
inverse word graph $\Delta$ with $Aut(\Delta)\simeq Gp\langle
\sigma|\sigma^2\rangle$. It is easy to show that $\sigma$ can be
lifted up to the automorphism $(sr)^2$, however the automorphism
$(sr)$ is not in $H$.\\

The next theorem covers the last case.
\begin{theorem}
  Let $e\in E(S_1*_U S_2)$ with $S_1,S_2$ finite inverse semigroups
  and suppose that $e$ is not $\mathcal{D}^{S_1*_US_2}$-related to any idempotent of $S_1$ or $S_2$. Therefore $H_e^{S_1*_US_2}$ is a homomorphic image
  of some subgroup of the maximal subgroup $H^{S_k}_g$ of $S_k$ for some $k\in\{1,2\}$ and $g\in E(S_k)$.
\end{theorem}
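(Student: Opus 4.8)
The plan is to reduce the computation of $H_e^{S_1*_US_2}$ to the automorphism group of a single lobe, and then to push this through the lifting theorem proved above. First I would note that, since under the embeddings the idempotents of $U$ become idempotents of both $S_1$ and $S_2$, the hypothesis on $e$ forces $e$ not to be $\mathcal{D}^{S_1*_US_2}$-related to any idempotent of $U$; hence by Theorem \ref{theo:characterization of more than one host} the graph $S\Gamma(X,R\cup W;e)$ has a \emph{unique} host, so that $Host({S\Gamma}(e))$ equals that host and, in particular, has a finite lobe tree. By Proposition \ref{prop:old Ste}(2) and Proposition \ref{prop:IsomorphismOpuntoidHost} we then have group isomorphisms $H_e^{S_1*_US_2}\simeq Aut(S\Gamma(e))\simeq Aut(Host({S\Gamma}(e)))$.

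Next I would locate a lobe stabilised by the whole group $Aut(Host({S\Gamma}(e)))$. By Lemma \ref{lemma:InducedAuthomorphismOnTree} this group acts on the finite lobe tree; an action of a group on a finite tree fixes a vertex or an edge (see \cite[Subsection 27.1.3]{Bab}), and since adjacent lobes carry different colours whereas graph homomorphisms preserve colours, no automorphism can interchange the two lobes of a fixed edge. Hence there is a lobe $\Delta$, say coloured $k\in\{1,2\}$, with $\varphi(\Delta)=\Delta$ for every $\varphi\in Aut(Host({S\Gamma}(e)))$, and restriction to $\Delta$ yields a homomorphism $Aut(Host({S\Gamma}(e)))\rightarrow Aut(\Delta)$ which is injective because two automorphisms of a connected deterministic inverse word graph that agree on one vertex coincide. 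This is essentially the remark following Lemma \ref{lemma:InducedAuthomorphismOnTree}, and it shows that $H_e^{S_1*_US_2}$ embeds into $Aut(\Delta)$.

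Now I would bring in the maximum determinizing automaton. Choosing $\nu\in V(\Delta)$ and putting $g=e_k(\nu)\in E(S_k)$, the minimum idempotent labelling a loop based at $\nu$, \cite[Lemma 2]{Finite} gives that $(\nu,\Delta,\nu)$ is a DV-quotient of $\mathcal{A}(X_k,R_k;g)=(\alpha,\Sigma,\alpha)$, and $Aut(\Sigma)=Aut(S\Gamma(X_k,R_k;g))\simeq H_g^{S_k}$ by Proposition \ref{prop:old Ste}(2). Theorem \ref{lifting property} then provides a subgroup $H\le Aut(\Sigma)$ and an epimorphism $\lambda:H\twoheadrightarrow Aut(\Delta)$. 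Writing $K\le Aut(\Delta)$ for the image of $H_e^{S_1*_US_2}$ under the embedding of the previous paragraph, the subgroup $\lambda^{-1}(K)$ of $H\le Aut(\Sigma)\simeq H_g^{S_k}$ is mapped by $\lambda$ onto $K\simeq H_e^{S_1*_US_2}$, which is exactly the assertion (with $g\in E(S_k)$).

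I expect the only step requiring real care to be the second paragraph, namely producing a \emph{single} lobe $\Delta$ fixed by the whole group (rather than one fixed lobe per automorphism) and checking that the restriction map is faithful; the first paragraph is a direct reading of Theorem \ref{theo:characterization of more than one host}, the appeal to Theorem \ref{lifting property} is immediate, and the concluding combination is the elementary observation that a subgroup of a quotient group is a quotient of a subgroup.
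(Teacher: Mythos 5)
Your proof is correct and follows essentially the same route as the paper: both arguments reduce $H_e^{S_1*_US_2}$ to the automorphism group of a single lobe fixed by the whole group (via the unique host and the fixed-vertex-or-edge property of finite tree actions) and then invoke Theorem \ref{lifting property} to exhibit it as a quotient of a subgroup of $Aut(\Sigma)\simeq H^{S_k}_g$. If anything you are slightly more careful than the paper, which asserts $H_e^{S_1*_US_2}\simeq Aut(\Delta)$ where only an embedding onto a subgroup $K\le Aut(\Delta)$ is guaranteed; your pull-back $\lambda^{-1}(K)$ is precisely what the phrase ``homomorphic image of some subgroup'' in the statement requires.
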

\begin{proof}
  We already remarked that in this case $H_e^{S_1*_US_2}$ is isomorphic to the automorphism group of
   some lobe $\Delta$ of $Host(S\Gamma(e))$. By Theorem \ref{lifting property} $Aut(\Delta)$ is an homomorphic
  image of $Aut(\Sigma)$ where $\Sigma=S\Gamma(X_k,R_k;g)$ for
  some $g\in E(S_k)$. Therefore $H_e^{S_1*_US_2}$ is a homomorphic image of
  $Aut(\Sigma)\simeq H_g^{S_k}$. In particular $H_e^{S_1*_US_2}\simeq H_g^{S_k}/N$ where
  $N$ is the normal subgroup described in Theorem \ref{lifting property}.
\end{proof}

\begin{rem}
We note that when $S_1$ and $S_2$ are $E$-unitary then no quotient
has to be performed in the construction of the \sch graph of some
word with respect to the standard presentation of the amalgam.
Then, if $e$ is not $\mathcal{D}^{S_1*_US_2}$-related to any idempotent of $S_1$ or $S_2$, $H_e^{S_1*_US_2}$ is isomorphic to the
maximal subgroup $H^{S_k}_g$ of $S_k$ for some $k\in\{1,2\}$ and
$g\in E(S_k)$.
\end{rem}

\section{Conclusion}

We have completely determined the  structure of the maximal subgroups of the amalgamated free-product of an amalgam of finite inverse semigroups.
All these groups are finitely presented, and we sketch the proof that their presentations are effectively computable. For more details of the proof see the final chapter of \cite{Rthesys}.
\begin{theorem}
  Let $e\in E(S_1*_U S_2)$ with $S_1,S_2$ finite inverse semigroups,
  then there is an algorithm to compute a presentation of $H_e^{S_{1}*_{U}S_{2}}$.
\end{theorem}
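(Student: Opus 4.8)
The plan is to follow the division used above: Case~1, in which $e$ is $\mathcal{D}^{S_1*_US_2}$-related to an idempotent of $S_1$ or $S_2$, which splits further according to whether the related idempotent can be taken inside $U$; and Case~2, in which $e$ is not $\mathcal{D}^{S_1*_US_2}$-related to any idempotent of $S_1$ or $S_2$. In each case I would reduce the computation of a presentation of $H_e^{S_1*_US_2}$ to effective manipulations of finite objects. The common computational input is the construction of \cite{Finite}: from a word $w$ with $w\tau=e$ one produces algorithmically the finite automaton $Core(w)$, whose underlying graph already contains a host of $S\Gamma(X,R\cup W;w)$. Since the \sch graphs $S\Gamma(X_i,R_i;f)$ of idempotents of the finite semigroups $S_i$ are finite and computable and the word problem of $S_1*_US_2$ is decidable, I would first use Theorem \ref{theo:characterization of more than one host} to decide which case holds, and in Case~1 also decide whether the related idempotent may be chosen in $U$, exhibiting otherwise a witnessing idempotent $g$ of $S_i\setminus U$.

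In the sub-case of Case~1 where no related idempotent lies in $U$, Corollary \ref{MaxNotD-relatedtoU} gives $H_e^{S_1*_US_2}\simeq H_g^{S_i}$, a subgroup of the finite semigroup $S_i$, which one computes directly from $S_i$ and presents, say, by its multiplication table. In the remaining sub-case, the analysis of Subsection~\ref{sec: maximal sub} (Case~1) shows $H_e^{S_1*_US_2}\simeq\pi(\mathcal{G}(-),Y)$ with $Y$ finite, the vertex groups being maximal subgroups of idempotents of $U$ in $S_1$ or $S_2$ and the edge groups maximal subgroups in $U$ (Proposition \ref{prop:stabilizer of an edge is maximal subgroup of U}), with edge monomorphisms that are inclusions followed by conjugations. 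Once $Y$ and these data are available explicitly, I would invoke the standard presentation of the fundamental group of a finite graph of groups (\cite[page 14]{GroupActingOnGraphs}): a disjoint union of presentations of the vertex groups, one generator $y$ per edge, the relations $y^{-1}\sigma_y(a)y=\tau_y(a)$ identifying the two embedded copies of each edge group, and $y=1$ for each $y$ in a fixed spanning tree. So the real content is producing $(\mathcal{G}(-),Y)$ effectively.

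This effective construction of $(\mathcal{G}(-),Y)$ is the step I expect to be the main obstacle, since $Host(S\Gamma(e))$ may be infinite even though $Y$ is finite. The way around it uses the finiteness of $S_1,S_2$ as a stopping criterion: by Theorem \ref{theo:characterization of more than one host} every lobe of $Host(S\Gamma(e))$ is isomorphic to some $S\Gamma(X_i,R_i;f)$ with $f\in E(U)$, so there are only boundedly many lobe types and each lobe has boundedly many neighbours; by Propositions \ref{Prop:ExtensionToIsomorphism} and \ref{prop:IsomorphismOpuntoidHost}, together with Theorem \ref{theo:nomultiedge}, isomorphic lobes, and edges of the same type, fall into single $H_e^{S_1*_US_2}$-orbits, so $|Y|$ is bounded by a computable function of $|U|,|S_1|,|S_2|$. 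I would therefore grow the host by a breadth-first iteration of Construction~5 from the initial lobe, recording the isomorphism types of the lobes and adjacent-lobe pairs seen so far and testing for a shift-isomorphism as in Corollary \ref{cor:characterization of infinite host set}; by Theorem \ref{theo: finiteness conditions} this stabilises after finitely many steps, at which point one reads off $Y$, the vertex stabilisers (computed inside the finite semigroups $S_1,S_2$), the edge stabilisers (maximal subgroups of $U$, by Proposition \ref{prop:stabilizer of an edge is maximal subgroup of U}), and the conjugating elements $g_y$, each realised by an isomorphism between finite lobes and hence computable.

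In Case~2 the (unique) host is finite and sits inside $Core(e)$, and $H_e^{S_1*_US_2}$ is isomorphic to $Aut(\Delta)$ for a computable lobe $\Delta$ of it. By Theorem \ref{lifting property}, $Aut(\Delta)$ is the quotient $H/N$ of explicit finite subgroups $H$ and $N$ of $Aut(\Sigma)\simeq H_g^{S_k}$, where $\Sigma=S\Gamma(X_k,R_k;g)$ is a finite \sch graph of $S_k$ and the determinizing quotient $\pi\colon\Sigma\to\Delta$ is computable; since $\Sigma$, $\Delta$, $\pi$, and hence $H$ and $N$, are explicit finite objects, one computes the finite group $H/N$ and presents it directly. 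Assembling the three situations yields the desired algorithm; the only genuinely delicate point is the termination argument for the host iteration in Case~1, which rests on the finiteness bounds above and on Theorem \ref{theo: finiteness conditions}.
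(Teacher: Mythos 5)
Your proposal is correct and follows essentially the same route as the paper's own (sketched) argument: when the host is unique, compute $Aut(\Delta)$ for a lobe of the finite host directly (with Theorem \ref{lifting property} available for Case~2), and when there are several hosts, grow the union of hosts from $Core(u)$, stopping once every adjacent host is isomorphic to a lobe already obtained, then read off the finite graph of groups $(\mathcal{G}(-),Y)$ and present its fundamental group. Your version merely fills in more of the bookkeeping (case discrimination via Theorem \ref{theo:characterization of more than one host}, explicit edge data via Proposition \ref{prop:stabilizer of an edge is maximal subgroup of U}) that the paper defers to \cite{Rthesys}.
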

\begin{proof}
 If the host is unique then $H_e^{S_1*_US_2}$ is the automorphism group of a lobe $\Delta$ of the host. The host is finite, so such a lobe can be determined as well as a presentation of $Aut(\Delta)$. If $S\Gamma(e)$ has more than one host, it is enough to find a maximal subtree of $Y$ and then to compute the presentation of $H_e^{S_{1}*_{U}S_{2}}$. Starting from any host in $Core(u)$ for some word $u$ equivalent to $e$ in $S_{1}*_{U}S_{2}$, we can build a maximal subtree of $Y$ recursively adding at each step adjacent hosts which are non-isomorphic
 to the ones previously chosen. It is straightforward to check that when we obtain an opuntoid graph $\Theta$ for which each adjacent host is isomorphic to a lobe occuring in $\Theta$, then by Propositions \ref{Prop:ExtensionToIsomorphism} and \ref{prop:IsomorphismOpuntoidHost}, all the lobes of $\Theta$ are representatives of all the orbits. 
\end{proof}
We end the section considering the case when $S_{1}, S_{2}$ are combinatorial. Note that a finite inverse semigroup which is combinatorial is a semilattice. Thus, in our case any \sch automaton is formed by at most two adjacent lobes. Hence, as an easy consequence of the above results, we have the following
\begin{cor}
 Let ${S_1,S_2;U}$ be an amalgam of finite inverse semigroups, then $S_1*_US_2$ is combinatorial if and only if $S_1$ and $S_2$ are both combinatorial.
\end{cor}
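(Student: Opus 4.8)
The plan is to prove the two implications separately; the forward one will be a short embedding argument, and the backward one will exploit the collapse of all the relevant \sch graphs to single vertices over semilattices. For ``$S_1*_US_2$ combinatorial $\Rightarrow$ $S_1,S_2$ combinatorial'', recall that a finite inverse semigroup is combinatorial precisely when each of its maximal subgroups is trivial. Since each $S_i$ embeds in $S_1*_US_2$ \cite{Hall}, every maximal subgroup $H_g^{S_i}$ (with $g\in E(S_i)$) maps isomorphically onto a subgroup of $S_1*_US_2$ which is contained in the maximal subgroup of $S_1*_US_2$ at the image of $g$; as $S_1*_US_2$ is combinatorial, that larger group is trivial, hence so is $H_g^{S_i}$, and $S_i$ is combinatorial. (Alternatively, Corollary~\ref{MaxNotD-relatedtoU} handles the case $g\notin U$, and for $g\in E(U)$ the group $H_g^{S_i}$ occurs as a vertex group of $(\mathcal{G}(-),Y)$ for $e=g$, hence embeds in $H_g^{S_1*_US_2}$ by Proposition~\ref{Prop:embedding of vertex groups} and Corollary~\ref{prop:MaximalSubgroupIsomFundamentalGroup}.)

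For the converse, assume $S_1,S_2$ are combinatorial. Since, as recalled above, a finite combinatorial inverse semigroup is a semilattice, each $S_i$, and hence the common subsemigroup $U$, is a semilattice; in particular $\mathcal{R}^{S_i}$ is the identity relation, so every \sch graph relative to $\langle X_i|R_i\rangle$ consists of a single vertex (with loops), and therefore so does every finite closed $DV$-quotient of such a graph, i.e.\ every lobe of every opuntoid graph over $\langle X|R\cup W\rangle$. The crucial step is then to observe that a connected opuntoid graph all of whose lobes are single vertices is itself a single vertex: if such a graph had at least two lobes, its lobe tree would contain an edge, hence there would be two adjacent lobes $\Delta,\Delta'$ sharing an intersection vertex $\nu$; but then $V(\Delta)=V(\Delta')=\{\nu\}$, and since an intersection vertex lies on exactly the two lobes $\Delta_1(\nu),\Delta_2(\nu)$, connectedness of the lobe tree forces the whole graph to reduce to the single vertex $\nu$ carrying loops of both colours. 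Applying this to the \sch graphs $S\Gamma(X,R\cup W;w)$, which are connected (complete opuntoid) automata by Proposition~\ref{Prop:Schutzenberger automata}, we conclude that every such graph has a single vertex, i.e.\ every $\mathcal{R}$-class of $S_1*_US_2$ is trivial; since $a\,\mathcal{R}\,aa^{-1}$ holds in any inverse semigroup, this forces $a=aa^{-1}$ for all $a$, so $S_1*_US_2$ is a semilattice, and in particular combinatorial. (Equivalently, by Proposition~\ref{prop:old Ste}, $H_e^{S_1*_US_2}\simeq Aut(S\Gamma(X,R\cup W;e))$ is the automorphism group of a one-vertex graph, hence trivial.)

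The one point requiring genuine attention is the collapsing step in the converse: it uses only the definition of opuntoid graph (the lobe graph being a tree, and an intersection vertex lying on exactly two lobes) together with the connectedness of inverse word graphs, with no semigroup-theoretic input, while everything else is immediate from results already established. A more roundabout route for the converse would replace it by Corollary~\ref{prop:MaximalSubgroupIsomFundamentalGroup} together with Theorem~\ref{theo: finiteness conditions}: over semilattices all vertex and edge groups of $(\mathcal{G}(-),Y)$ are trivial, and since $\mathcal{D}$ is trivial in both $S_1$ and $S_2$ the numerical criterion of that theorem can never be met, so $Y$ is a tree and $\pi(\mathcal{G}(-),Y)$ is trivial.
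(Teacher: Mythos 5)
Your forward implication (combinatorial amalgam $\Rightarrow$ combinatorial factors) is correct, and for the converse you follow exactly the route the paper itself sketches: combinatorial $\Rightarrow$ semilattice $\Rightarrow$ one-vertex \sch graphs over $\langle X_i|R_i\rangle$ $\Rightarrow$ one-vertex lobes $\Rightarrow$ the whole opuntoid graph collapses to a single vertex $\Rightarrow$ trivial automorphism groups. The collapsing step you single out as the delicate one is fine. The genuine gap is one step earlier, in the premise you import from the paper: it is \emph{not} true that a finite combinatorial inverse semigroup is a semilattice. The five-element combinatorial Brandt semigroup $B_2=\{a,a^{-1},aa^{-1},a^{-1}a,0\}$ with $a^2=0$ is $\mathcal{H}$-trivial (all its maximal subgroups are trivial), but $a$ is not idempotent. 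Combinatorial means $\mathcal{H}$ is the identity; it does not force $\mathcal{R}$ or $\mathcal{D}$ to be trivial, and your argument (in both versions) really needs $\mathcal{R}^{S_i}$, respectively $\mathcal{D}^{S_i}$, to be trivial --- i.e.\ it needs $S_1,S_2$ to be semilattices. What you have actually proved is the weaker implication: if $S_1$ and $S_2$ are semilattices, then $S_1*_US_2$ is a semilattice, hence combinatorial.

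This gap cannot be repaired, because the statement itself fails and in fact contradicts Theorem \ref{theo: finiteness conditions}. Take $S_1=\mathrm{Inv}\langle a\mid a^2=0\rangle\simeq B_2$ and $S_2\simeq B_2$ generated by $b$, amalgamated over the common semilattice of idempotents $U=\{e,f,0\}$ with $e=aa^{-1}=bb^{-1}$ and $f=a^{-1}a=b^{-1}b$. Then $e$ and $f$ are $\mathcal{D}^{S_1}$-related and $\mathcal{D}^{S_2}$-related but not $\mathcal{D}^{U}$-related, so the criterion of Theorem \ref{theo: finiteness conditions} is satisfied with $t=2$ and $(f_1,f_2)=(e,f)$, and $H_e^{S_1*_US_2}$ is infinite; concretely, $S\Gamma(X,R\cup W;e)$ is a bi-infinite alternating chain of two-vertex lobes and $H_e^{S_1*_US_2}\simeq\mathbb{Z}$, generated by $ab^{-1}$ (this is the classical example of Haataja, Margolis and Meakin). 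Both $S_1$ and $S_2$ are combinatorial, yet $S_1*_US_2$ is not. So the corollary is correct only with ``combinatorial'' strengthened to ``semilattice'' (equivalently, $\mathcal{D}$-trivial) in the hypotheses on $S_1,S_2$; your proof, read with that substitution, is sound and matches the paper's intended argument.
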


\section*{Acknowledgments}

The last author acknowledges the support from the European Regional Development Fund through the
programme COMPETE and by the Portuguese Government through the FCT -- Funda\c c\~ao para a Ci\^encia e a Tecnologia under the project
PEst-C/MAT/UI0144/2011 as well as the support of the FCT project SFRH/BPD/65428/2009.

\bibliography{biblio}
\bibliographystyle{plain}

\end{document}